\theoremstyle{plain}
\newtheorem{ithm}{Theorem}
\newtheorem{iconj}{Conjecture}
\newtheorem{theorem}{Theorem}[section]
\newtheorem{proposition}[theorem]{Proposition}
\newtheorem{lemma}[theorem]{Lemma}
\newtheorem{oldthm}{Theorem}
\theoremstyle{definition}
\newtheorem{definition}[theorem]{Definition}
\newtheorem{remark}[theorem]{Remark}
\long\def\symbolfootnote[#1]#2{\begingroup
\def\thefootnote{\fnsymbol{footnote}}\footnote[#1]{#2}\endgroup}
\def\lra{{\longrightarrow}}
\def\GL{{\bf GL}}
\def\X{\mathbf{X}}
\def\O{\mathbf{O}}
\def\cA{\mathcal{A}}
\def\sgn{\mathrm{sgn}}
\def\N{\mathrm{N}}
\def\1{\mf{1}}
\DeclareMathOperator{\Hom}{Hom} 
\DeclareMathOperator{\sign}{sign}
\DeclareMathOperator{\Tr}{Tr}
\DeclareMathOperator{\ord}{ord}
\DeclareMathOperator\Ind{Ind}
\newcommand{\Res}{\mathrm{\,Res\,}}
\newcommand{\stack}[2]{\genfrac{}{}{0pt}{}{#1}{#2}}
\newcommand{\mf}{\mathfrak }
\def\l{\ell}
\def\fa{\mathfrak{a}}
\def\fp{\mathfrak{p}}
\def\fq{\mathfrak{q}}
\def\fg{\mathfrak{g}}
\def\fZ{\mathfrak{Z}}
\def\fL{\mathfrak{L}}
\def\Z{\mathbf{Z}}
\def\F{\mathbf{F}}
\def\Q{\mathbf{Q}}
\def\C{\mathbf{C}}
\def\R{\mathbf{R}}
\def\bdf{\begin{defn}}
\def\edf{\end{defn}}
\def\cM{\mathcal{M}}
\def\cD{\mathbf{D}}
\def\cO{\mathcal{O}}
\def\cV{\mathcal{V}}
\def\cW{\mathcal{W}}
\def\ff{\mathfrak{f}}
\def\fb{\mathfrak{b}}
\def\fc{\mathfrak{c}}
\def\Gal{{\rm Gal}}
\def\sP{\mathcal{P}}
\def\sQ{\mathcal{Q}}
\def\mbf{\mathbf}
\def\mb{\mathbb}
\def\cF{{\cal F}}
\def\sA{{A}}
\def\sL{L}
\def\sB{\mathbf{B}}
\begin{document}
\title{Integral Eisenstein cocycles on $\GL_n$, I: \\  Sczech's cocycle and $p$-adic $L$-functions of \\
Totally Real Fields}

\author{Pierre Charollois \\ Samit Dasgupta\thanks{We would like to thank Michael Spiess for several discussions regarding his formalism for $p$-adic $L$-functions and for openly sharing his ideas that are presented in Section~\ref{s:oov}.   We would also like to thank Robert Sczech and Glenn Stevens for their suggestions and encouragement, and for their papers \cite{Sc2} and 
\cite{st} that helped inspire this work.
Samit Dasgupta   was partially supported by NSF grants DMS 0900924 and DMS 0952251 (CAREER), as well as a fellowship from the Sloan Foundation. Part of this paper was written while Pierre Charollois enjoyed  the hospitality of UC Santa Cruz.}}

\maketitle

\begin{abstract}
We define an integral version of Sczech's Eisenstein cocycle on $\GL_n$ by smoothing at a prime $\ell.$  As a result we obtain a new proof of the integrality of the values at nonpositive integers of the smoothed partial zeta functions  associated to ray class extensions of totally real fields.
We also obtain
a new construction of the $p$-adic $L$-functions associated to these extensions.  
Our cohomological construction allows for a study of the leading term of 
these $p$-adic $L$-functions at $s=0$.
We apply Spiess's formalism  to prove that the order of vanishing at $s=0$  is at least equal to the expected one, as conjectured by Gross.  This result was already known from Wiles' proof of the Iwasawa Main Conjecture. 
\end{abstract}

 \tableofcontents

\section*{Introduction}

Let $F$ be a totally real field of degree $n$, and let $\ff$ be an integral ideal of $F$.
For a fractional ideal $\fa$ of $F$ relatively prime to the conductor $\ff$, consider the partial zeta function
\begin{equation} \label{e:zetadef}
 \zeta_{\ff}(\fa, s) = \sum_{\fb \sim \fa} \frac{1}{\N\fb^s}, \qquad \text{Re}(s) > 1. 
 \end{equation}
Here the sum ranges over integral ideals $\fb \subset F$ equivalent to $\fa$ in the narrow ray class group modulo $\ff$, which we denote $G_\ff$.
A classical result of Siegel and Klingen states that the partial zeta functions $\zeta_{\ff}(\fa, s)$, which may be extended to 
meromorphic functions on the complex plane,  assume rational values at nonpositive integers $s$. 
Siegel proved this fact by realizing these special values as the constant terms of certain
Eisenstein series on the Hilbert modular group associated to $F$.  The rationality of the constant
terms follows from the rationality of the other Fourier coefficients, which have a simple form.

Shintani gave an alternate proof of the Siegel--Klingen result using a ``geometry of numbers" approach.
Shintani fixed an isomorphism $F \otimes_\Q \R \cong \R^n$, and considered a fundamental domain $D$ for the
action of the group of totally positive units in $F$ congruent to 1 modulo $\ff$ on the totally positive orthant of $\R^n$.  The partial zeta functions
of $F$ could then be expressed as a sum indexed by the points of $D$ contained in various lattices in $\R^n$.
Shintani evaluated these sums using standard techniques from complex analysis and
expressed them explicitly in terms of sums of products of Bernoulli polynomials.

In 1993, Sczech gave yet another proof of the Siegel--Klingen rationality theorem.  
He defined an ``Eisenstein" cocycle $\Psi$ on $\GL_n(\Q)$ valued in a  space of $\Q$-valued distributions denoted $M_\Q$.  He then showed that the cohomology class $[\Psi] \in H^{n-1}(\GL_n(\Q), M_\Q)$ could be paired with certain classes
in a dual homology group to
 yield the special values of
all totally real fields $F$ of degree $n$ at nonpositive integers, thereby demonstrating their rationality.

Each of these proofs of the Siegel--Klingen rationality theorem bears an integral refinement.  Deligne and Ribet gave such a refinement of Siegel's method following an idea initiated by Serre.
They constructed a model over $\Z$
of the relevant Hilbert modular scheme, and proved that its fibers in characteristic $p$ are geometrically irreducible. 
% As a corollary, they
% were able to deduce the following integrality statement. 
Meanwhile,  Barsky and Pi.~Cassou-Nogu\`es  proved an integral refinement of Shintani's formulas and interpreted these results in terms of $p$-adic measures.

\bigskip

The first goal of the present paper is to provide an integral refinement of Sczech's cocycle $\Psi$.  We introduce a ``smoothing" operation with respect to a prime $\ell$, and use it to define a cocycle $\Psi_\ell$ that satisfies an important integrality property (see Theorem~\ref{t:main} below for a precise statement).
As an application of our results, we give new proofs of the following two celebrated theorems of Deligne--Ribet and Cassou-Nogu\`es.

\begin{ithm} \label{t:integral} 
 Let $\fc$ be an integral ideal of $F$ relatively prime to $\ff$ with prime norm $\ell$. The smoothed zeta function
\begin{equation}\label{e:zetacdef}
 \zeta_{\ff, \fc}(\fa, s) = \zeta_{\ff}(\fa\fc, s) - \N\fc^{1-s}\zeta_\ff(\fa, s)
\end{equation}
assumes values in $\Z[1/\ell]$ at nonpositive integers $s$.\symbolfootnote[2]{See Remark~\ref{r:candl} for a discussion of the condition that $\fc$ has prime norm.  Also, we note that Theorem~\ref{t:integral} has the corollary that
the ``twice smoothed" zeta function
\begin{equation*}
\begin{split}
  \zeta_{\ff, \fc, \fb}(\fa, s) & = \zeta_{\ff, \fb}(\fa\fc, s) - \N\fc^{1-s}\zeta_{\ff, \fb}(\fa, s) \\
  & = \zeta_{\ff, \fc}(\fa\fb, s) - \N\fb^{1-s}\zeta_{\ff, \fc}(\fa, s)
\end{split}
\end{equation*}
assumes {\em integer} values at nonpositive integers $s$ when $(\N\fb, \N\fc) = 1$.  Gross has recently provided
an interpretation of these integers in terms of dimensions of certain spaces of automorphic forms \cite{gross2}.
}
\end{ithm}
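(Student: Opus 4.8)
The plan is to realise $\zeta_{\ff,\fc}(\fa,-k)$, for each integer $k\ge 0$, as a cohomological pairing against the smoothed cocycle $\Psi_\ell$, and to deduce its $\Z[1/\ell]$-integrality from the integrality of $\Psi_\ell$ itself (Theorem~\ref{t:main}). \emph{Step~1 (the unsmoothed dictionary).} Recall Sczech's homogeneous $(n-1)$-cocycle $\Psi$ on $\GL_n(\Q)$ valued in the module $M_\Q$ of $\Q$-valued distributions, whose explicit formula on a test vector is a sum of products of periodic Bernoulli functions at rational arguments. Fix an embedding $\cO_F\hookrightarrow\Q^n$; the pair $(\fa,\ff)$ determines a free abelian subgroup $\Gamma\subset\GL_n(\Q)$ of rank $n-1$ isomorphic to the group of totally positive units of $\cO_F$ congruent to $1$ modulo $\ff$, together with a fundamental homology class $z_\ff(\fa)\in H_{n-1}(\Gamma,\Z)$. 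Sczech's theorem says that capping $[\Psi]|_\Gamma$ with $z_\ff(\fa)$ produces a distribution whose value on the order-$k$ test function attached to $\fa$ equals $\zeta_\ff(\fa,-k)$, up to an elementary normalising factor which will be absorbed into the integral lattice of Step~3.

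\emph{Step~2 (smoothing produces $\zeta_{\ff,\fc}$).} Since $\N\fc=\ell$, multiplication by $\fc$ exhibits $\fa\fc$ as a sublattice of $\fa$ of index $\ell$. Applying to the coefficient module the $\fc$-smoothing operator of the paper — an explicit endomorphism built from $\N\fc$ and a pullback attached to $\fc$ — yields a new $(n-1)$-cocycle $\Psi_\ell$ whose class is (routinely) independent of the auxiliary choices. It is arranged so that capping $[\Psi_\ell]|_\Gamma$ with $z_\ff(\fa)$ and evaluating at order $k$, i.e.\ specialising at $s=-k$ where $\N\fc^{1-s}=\ell^{1+k}$, returns
\[
\zeta_\ff(\fa\fc,-k)-\ell^{1+k}\,\zeta_\ff(\fa,-k)=\zeta_{\ff,\fc}(\fa,-k),
\]
with the same normalisation as in Step~1.

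\emph{Step~3 (integrality of $\Psi_\ell$).} This is the crux, and is precisely Theorem~\ref{t:main}: $\Psi_\ell$ takes values in a $\Z[1/\ell]$-submodule $M\subset M_\Q$, stable under $\Gamma$ (indeed under $\GL_n(\Z[1/\ell])$), on which the order-$k$ evaluation maps land in $\Z[1/\ell]$. The point is that the denominators in Sczech's formula — which a priori involve all primes, through the values $\overline{B}_m(x)$ at rational $x$ and through the normalising factorials — are cleared by the smoothing operator, except for bounded powers of $\ell$; the mechanism is the distribution relation $\sum_{j=0}^{\ell-1}\overline{B}_m((x+j)/\ell)=\ell^{1-m}\overline{B}_m(x)$ for periodic Bernoulli polynomials, which the $\fc$-twist is engineered to produce coordinate by coordinate. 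A composite norm would yield several translate terms and only a partial cancellation, which is why $\N\fc=\ell$ is required to be prime (see Remark~\ref{r:candl}).

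\emph{Conclusion.} Restricting to $\Gamma$ and invoking Step~3, $[\Psi_\ell]|_\Gamma\in H^{n-1}(\Gamma,M)$ with $M$ a $\Z[1/\ell]$-module; capping with the integral class $z_\ff(\fa)$ and applying the order-$k$ evaluation therefore lands in $\Z[1/\ell]$, and by Step~2 this value is $\zeta_{\ff,\fc}(\fa,-k)$, which proves Theorem~\ref{t:integral}. (The corollary on the twice-smoothed zeta function then follows by applying this twice, once for $\fc$ and once for $\fb$, using the symmetry of the double smoothing and $\Z[1/\ell]\cap\Z[1/\ell']=\Z$ for distinct primes $\ell,\ell'$.) The main obstacle is Step~3: pinning down an explicit integral lattice $M$ and proving cocycle-level integrality of $\Psi_\ell$. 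A subsidiary but necessary bookkeeping task is to track the normalising constants (factorials, powers of $2$, the index $\ell$) through the homology pairing of Steps~1--2, so as to guarantee that the evaluation map really sends $M$ into $\Z[1/\ell]$ and that no prime other than $\ell$ slips into a denominator.
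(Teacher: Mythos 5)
Your proposal follows essentially the same route as the paper: use Sczech's formula for $\fa$ and for $\fa\fc$ to express $\zeta_{\ff,\fc}(\fa,-k)$ as the value $\Psi_\ell(\sA,P^k,Q,v)$ of the smoothed cocycle on the unit cycle, then invoke the integrality theorem (Theorem~\ref{t:main}). The one loose end you flag---tracking the normalising constants through the pairing---is precisely the point the paper must also address: the direct argument with $\Psi_\ell^+$ and the $n$-tuple $\tilde Q$ only yields $\frac{1}{2n}\Z[1/\ell]$, and the residual factor $1/(2n)$ is removed by the refined evaluation formula with a single linear form proved in \cite{CDG}.
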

Our integrality results further allow for a new construction of the Deligne--Ribet--Cassou-Nogu\`es $p$-adic zeta functions $\zeta_{\ff, \fc, p}(\fa, s)$ interpolating the classical zeta values $\zeta_{\ff, \fc}(\fa,  s)$.  
Define $\zeta_{\ff}^*(\fa,  s)$ as in (\ref{e:zetadef}), but with the sum restricted to ideals $\fb$ relatively prime to $p$; define $\zeta_{\ff, \fc}^*(\fa,  s)$ from $\zeta_{\ff}^*(\fa,  s)$ as in (\ref{e:zetacdef}).
Let $\cW$ denote the {\em weight space} of continuous homomorphisms from $\Z_p^*$ to $\C_p^*$, with $k \in \Z$ embedded as $x \mapsto x^k.$

\begin{ithm} \label{t:padic} 
 Let $\fc$ be an integral ideal of $F$ relatively prime to $\ff p$ with prime norm $\ell$.
 There exists a unique $\Z_p$-valued analytic function
$\zeta_{\ff, \fc, p}(\fa, s)$ of the  variable $s \in \cW$ such that
\[ \zeta_{\ff, \fc, p}(\fa,  - k) = \zeta^*_{\ff, \fc}(\fa,  -k)
\]
for all nonnegative integers $k$.
\end{ithm}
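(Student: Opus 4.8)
The plan is to realize all of the smoothed, $p$-depleted special values $\zeta^*_{\ff,\fc}(\fa,-k)$, $k\geq 0$, as the moments of a single bounded $p$-adic measure, and then to obtain $\zeta_{\ff,\fc,p}(\fa,s)$ by integrating the tautological character $s\in\cW$ against that measure. First I would pair the class of the integral cocycle $\Psi_\ell$ with the homology class attached to the pair $(F,\fa)$ that already appears in the proof of Theorem~\ref{t:integral}, but now evaluate it against $p$-adically varying test functions on $\cO_F\otimes\Z_p\cong\Z_p^n$ rather than directly against the norm form. Because $\Psi_\ell$ has been smoothed at $\ell$, the resulting $\Z[1/\ell]$-valued distribution on $\cO_F\otimes\Z_p$ is in fact a bounded measure $\mu=\mu_{\ff,\fc}(\fa)$; its integrality is exactly the content of Theorem~\ref{t:main}, and since $\fc$ is prime to $p$ we have $\ell\neq p$, so $\Z[1/\ell]\subseteq\Z_p$ and $\mu$ is $\Z_p$-valued. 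Restricting $\mu$ to the unit group $U=(\cO_F\otimes\Z_p)^*$ --- which on the ideal side corresponds to summing only over $\fb$ prime to $p$, i.e.\ to passing from $\zeta_{\ff,\fc}$ to $\zeta^*_{\ff,\fc}$ --- and pushing forward along the norm map $\N:U\to\Z_p^*$, I obtain a $\Z_p$-valued measure $\mu^*$ on $\Z_p^*$.

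The heart of the argument is then the explicit specialization: for every integer $k\geq 0$,
\[
 \int_{\Z_p^*} t^k\,d\mu^*(t) \;=\; \int_U \N(x)^k\,d\mu(x) \;=\; \zeta^*_{\ff,\fc}(\fa,-k).
\]
This is proved by unwinding the definition of $\Psi_\ell$ and comparing with the Shintani-type decomposition of the partial zeta function exactly as in the proof of Theorem~\ref{t:integral}: the $k$-th moment of the cohomologically defined measure is matched with the smoothed value $\zeta_\ff(\fa\fc,-k)-\N\fc^{1+k}\zeta_\ff(\fa,-k)$, the support condition on $U$ accounting for the removal of the Euler factors at $p$.

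With this in hand, for $s\in\cW$ the function $x\mapsto s(\N(x))$ is well defined and continuous on $U$ because $\N(x)\in\Z_p^*$, and I define
\[
 \zeta_{\ff,\fc,p}(\fa,s) \;:=\; \int_{\Z_p^*} s(t)\,d\mu^*(t).
\]
Since $\mu^*$ is $\Z_p$-valued it is an element of the Iwasawa algebra $\Z_p[[\Z_p^*]]$, and $\zeta_{\ff,\fc,p}(\fa,\cdot)$ is its Amice transform; it is therefore an analytic function on $\cW$ whose power-series coefficients on each component lie in $\Z_p$, which is the asserted $\Z_p$-valued analyticity. The interpolation property $\zeta_{\ff,\fc,p}(\fa,-k)=\zeta^*_{\ff,\fc}(\fa,-k)$ for $k\geq 0$ is precisely the moment identity above. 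For uniqueness, observe that the characters $s=-k$ with $k\geq 0$ have dense image in $\cW$: the nonpositive integers are dense in $\Z_p$ and surject onto $\Z/(p-1)$, so they accumulate in each of the finitely many disk components of $\cW$, and an analytic function vanishing there vanishes identically.

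The step I expect to be the main obstacle is the explicit moment computation --- showing that the measure produced by the cohomology class, integrated against the $p$-adic powers of the norm form, reproduces the smoothed and $p$-depleted partial zeta values for all $k$ simultaneously, in a manner visibly compatible with the integrality built into $\Psi_\ell$. Once Theorem~\ref{t:main} and this comparison are in place, the construction of $\zeta_{\ff,\fc,p}(\fa,s)$ and the proof of its uniqueness are the formal measure-theoretic packaging described above.
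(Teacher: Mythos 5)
Your proposal is correct in outline and follows the same strategy as the paper: package the values of the smoothed cocycle $\Psi_\ell$ (with $P=1$) into a bounded $\Z_p$-valued measure, identify the moments against powers of the norm form with the smoothed zeta values, and define $\zeta_{\ff,\fc,p}(\fa,s)$ by integrating $s$ against the push-forward. The one organizational difference is how the $p$-depletion (the passage from $\zeta_{\ff,\fc}$ to $\zeta^*_{\ff,\fc}$) is handled. The paper first reduces to the case where $\ff$ is divisible by every prime of $F$ above $p$, by summing the $p$-adic zeta functions of level $\fg=\lcm(\ff,\prod_{\fp\mid p}\fp)$ over classes mapping to $\fa$; in that case $\zeta^*_{\ff,\fc}=\zeta_{\ff,\fc}$, the polynomial $P$ takes unit values on all of $\X_v$, and no restriction of the measure is needed. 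You instead work with general $\ff$ and restrict the measure to the unit group before pushing forward. That route is also viable (it is essentially the paper's later Proposition~\ref{p:zetapb}), but it costs more than you budget for: the identity $\int_{U}\N(x)^k\,d\mu=\zeta^*_{\ff,\fc}(\fa,-k)$ does not follow ``exactly as in the proof of Theorem~\ref{t:integral},'' which only gives the integral over all of $\X_v$ equal to the undepleted value $\zeta_{\ff,\fc}(\fa,-k)$. One needs in addition (i) the fact that the measure built from the $P=1$ specialization recovers the higher specializations, $\int_{\X_v}P^k\,d\mu=\Psi_\ell(\sA,P^k,Q,v)$ --- this is the paper's Theorem~\ref{t:psiint} and rests on the congruence of Proposition~\ref{p:crucial}, not on formal unwinding; and (ii) an inclusion--exclusion over ideals $\fb$ supported at the primes above $p$ prime to $\ff$, together with a change-of-lattice formula (the paper's Theorem~\ref{t:mint}) expressing $\int_{v+a+M(\X)}P^k\,d\mu$ as a cocycle value for conjugated data, in order to identify the integral over each coset of the unit group with $\N\fb^{-k}\zeta_{\ff,\fc}(\fa\fb^{-1},-k)$. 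You correctly flag the moment computation as the main obstacle; just be aware that the depletion step is a genuinely separate computation from the interpolation over the full space, and that the paper's reduction to $\ff$ divisible by all $\fp\mid p$ lets one prove the theorem itself without it. Two harmless normalization points: the correct domain of restriction is $\cO_{p,\ff}^*=\cO_p^*\cap(1+\ff\cO_p)$ rather than the full unit group when $p\mid\ff$, and the moments involve $P(x)^k=(\N\fa\fc)^k\N(w\cdot x)^k$, so a factor $\langle\N\fa\fc\rangle^{-s}$ must accompany the push-forward along the field norm. Your uniqueness argument via density of the characters $x\mapsto x^{-k}$ in each component of $\cW$ is standard and fine.
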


\bigskip
Our  construction of the $p$-adic zeta functions of totally real fields allows us to embark on a new study 
of the behavior of the leading terms of these functions at $s=0$.
In order to state our main result, it is convenient to work with the $p$-adic $L$-functions associated to characters rather than the $p$-adic zeta functions associated to ideal classes.
To this end,  let $\chi: \Gal(\overline{F}/F) \rightarrow \overline{\Q}^*$ be a totally odd finite order character with conductor $\ff$.  We fix embeddings $\overline{\Q} \hookrightarrow \C$ and $\overline{\Q} \hookrightarrow \overline{\Q}_p$, so that $\chi$ can be viewed as taking values in $\C$ or $\overline{\Q}_p$.  Let\symbolfootnote[2]{As usual, replace $\mu_{p-1}$ by $\{\pm 1\}$ when $p=2$.}  \[ \omega: \Gal(\overline{F}/F) \longrightarrow \mu_{p-1} \subset \overline{\Q}^* \] denote the Teichm\"uller character.  There is a $p$-adic $L$-function  $L_{\fc,p}(\chi\omega, s)\colon \Z_p \lra \C_p^*$ associated to the totally even character $\chi \omega$, given by
\begin{equation}
 L_{\fc, p}(\chi \omega, s) :=  \sum_{\fa \in G_\ff} \chi(\fa\fc) \zeta_{\ff, \fc, p}(\fa, \langle \cdot \rangle^s ) \label{e:lpdef}
 \end{equation}
  where $\langle x \rangle = x/\omega(x)$ for $x \in \Z_p^*$.\symbolfootnote[3]{We are now following the classical conventions
  and viewing $L_{\fc, p}$ as a function of $\Z_p$ rather than as a function of weight space $\cW$.  An element 
  $s \in \Z_p$ yields the element $x \mapsto \langle x \rangle^s$ of $\cW$ appearing on the right side of (\ref{e:lpdef}).}
% where $\langle x \rangle = x/\omega(x)$ for $x \in \Z_p^*$. 
 % (For future applications we extend the definition of $\langle x \rangle$ to $x \in \Q_p^*$ by the rule
 % $\langle x \rangle := \langle x_p \rangle$.)
% The function $L_{\fc,p}(\chi\omega, s)$ 
It satisfies the interpolation property
 \begin{align*}
  L_{\fc, p}(\chi \omega, -k) &= L_{\fc}^*(\chi \omega^{-k}, -k)  \\
  & := L^*(\chi \omega^{-k}, -k)(1 - \chi\omega^{-k}(\fc)\N\fc^{1+k})
  \end{align*}
 for integers $k \ge 0$, where $L^*(\chi, s)$ denotes the  classical $L$-function with Euler factors at the primes dividing $p$ removed.

 Let $r_\chi$ denote the number of primes $\fp$ of $F$ above $p$ such that $\chi(\fp) = 1$.  It is well-known that
\[ \ord_{s=0} L_{\fc}^*(\chi, s) = \ord_{s=0} L^*(\chi, s) =  r_\chi \]
  (see \cite[2.6]{tate}).  
In \cite{gross}, Gross proposed the following:
\begin{iconj}[Gross] \label{c:gross} We have
 \[ \ord_{s=0} L_{\fc, p}(\chi\omega, s) = r_\chi. \]
\end{iconj}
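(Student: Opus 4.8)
By the result announced in the introduction --- obtained here by running Spiess's formalism on the smoothed Eisenstein cocycle $\Psi_\ell$, and also a consequence of Wiles' proof of the Main Conjecture --- one already has
\[
\ord_{s=0} L_{\fc,p}(\chi\omega, s) \ge r_\chi.
\]
The mechanism is that $\Psi_\ell$ attaches to each prime $\fp$ of $F$ above $p$ with $\chi(\fp)=1$ a ``$p$-adic logarithm'' class supported on the corresponding local torus, and the leading-term computation for $L_{\fc,p}(\chi\omega, s)$ at $s=0$ reduces to an $r_\chi$-fold iterated pairing involving these $r_\chi$ classes, whence a zero of order at least $r_\chi$. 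So the whole content of the conjectural equality is the reverse bound $\ord_{s=0} L_{\fc,p}(\chi\omega, s) \le r_\chi$, i.e. the non-vanishing of the leading coefficient $c_\chi := \lim_{s\to 0} s^{-r_\chi}\, L_{\fc,p}(\chi\omega, s)$, which is well defined since $L_{\fc,p}(\chi\omega,s)$ is not identically zero.

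The plan is to make $c_\chi$ completely explicit. Pushing the cohomological description of $L_{\fc,p}$ through the Spiess machinery, I expect $c_\chi$ to be a $p$-adic regulator: a nonzero rational factor, built from the classical leading term $\lim_{s\to 0} s^{-r_\chi} L^*(\chi,s)$ and from local terms at $\ell$ and at $p$, times a determinant
\[
\det\!\big( \log_p u_{\fp}^{(\fq)} \big)_{\fp,\fq},
\]
where $\fp$ and $\fq$ run over the $r_\chi$ primes of $F$ above $p$ at which $\chi$ is trivial and the $u_\fp^{(\fq)}$ are $\fp$-units lying in the $\chi$-isotypic part of a ray class group of $F$, i.e. Gross--Stark units. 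Concretely, these units should emerge by integrating $\Psi_\ell$ over the toric cycles attached to the decomposition groups of the $\fp$, in the spirit of the $\mathcal{L}$-invariant computations of Darmon--Dasgupta--Pollack. Carrying out this identification is mostly a matter of unwinding the definition of $\Psi_\ell$ near those cycles; the one real subtlety is checking that the rational prefactor is nonzero --- here the smoothing at $\fc$ with prime norm $\ell$ is what keeps everything integral and in sight --- and this I would settle by comparison with the known classical leading-term formula.

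The final step, and the genuine obstacle, is the non-vanishing of the $p$-adic regulator $\det(\log_p u_\fp^{(\fq)})$. Its archimedean analogue is a determinant of ordinary logarithms of independent units and is classically nonzero, but the $p$-adic avatar may a priori degenerate, and ruling that out is the heart of Gross's conjecture. When $r_\chi=1$ the statement is precisely the order-one Gross--Stark conjecture $\log_p u_\fp \ne 0$; here I would either invoke the theorem of Dasgupta--Darmon--Pollack and Ventullo, or reprove it inside the present framework by matching the formula for $c_\chi$ with the $\mathcal{L}$-invariant of $\chi\omega$ and using its non-triviality. When $r_\chi \ge 2$ the required higher-rank non-vanishing is not formal: one route is an induction on $[F:\Q]$ through intermediate totally real subfields, removing one column of the regulator at each stage; another is to combine the Main Conjecture (Wiles) --- which identifies $L_{\fc,p}(\chi\omega,s)$, up to a $p$-adic unit, with a generator of the characteristic ideal of an Iwasawa module $X_\chi$ --- with a proof that $\dim_{\overline{\Q}_p}\big(X_\chi \otimes \overline{\Q}_p\big)^{\Gamma} = r_\chi$, which again comes down to the non-degeneracy of the same $p$-adic Stark regulator. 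It is this transcendence-flavored non-vanishing that I expect to be the crux of the whole argument.
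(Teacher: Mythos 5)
There is a genuine gap, and in fact a mismatch of ambition: the statement you are addressing is stated in the paper as a \emph{conjecture} (Gross's conjecture), and the paper does not prove it. What the paper proves is only the one-sided bound $\ord_{s=0} L_{\fc,p}(\chi\omega,s)\ge r_\chi$ (Theorem~\ref{t:oov}), obtained by pairing the measure-valued class $[\mu_\ell]$ with the cycles $[\fL_k]$ and invoking Spiess's Theorems~\ref{t:sp1} and~\ref{t:sp2} to show those cycles vanish for $k<r_\chi$. Your first paragraph correctly reproduces this and correctly isolates the real content of the conjecture as the reverse inequality, i.e.\ the non-vanishing of the leading coefficient. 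But from that point on your text is a research program, not a proof: the identification of the leading term with a Gross--Stark $p$-adic regulator is itself conjectural (the paper only says it hopes to study this in future work, building on \cite{darmon-dasgupta} and \cite{dasgupta}), and the non-vanishing of $\det(\log_p u_\fp^{(\fq)})$ is precisely the open transcendence-type statement you yourself flag as ``the genuine obstacle.'' Invoking Dasgupta--Darmon--Pollack/Ventullo covers at best $r_\chi=1$, and your suggested routes for $r_\chi\ge 2$ (induction through subfields, or the Main Conjecture plus a claim that $\dim (X_\chi\otimes\overline{\Q}_p)^{\Gamma}=r_\chi$) are either unsubstantiated or circular, since the latter dimension statement is essentially a restatement of the non-degeneracy you are trying to prove.

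So the honest assessment is: your proposal does not prove the statement, and it could not be expected to, because within this paper the statement has the status of a conjecture with only the lower bound established. If your goal was to reproduce what the paper actually does, you should have written out the argument for Theorem~\ref{t:oov}: the reduction of $L^{(k)}_{\fc,p}(\chi\omega,0)$ for $k<r_\chi$ to the integrals $\int_{\O}(\log_p \N x)^k\, d\mu_\ell$ via Proposition~\ref{p:zetapb}, their cohomological interpretation as the cap product $\langle[\kappa_\fa],[\fL_k]\rangle$, and the vanishing of $[\fL_k]$ from the freeness of $C_c^\flat(F_p)$ over $\C_p[T]$ together with the augmentation-ideal computation. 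The equality itself must be left as a conjecture here.
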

Combining our cohomological construction of the $p$-adic $L$-function with Spiess's formalism (see \S5), we prove the following partial result towards Gross's conjecture:
\begin{ithm} \label{t:oov}
We have
 \[ \ord_{s=0} L_{\fc, p}(\chi\omega, s) \ge r_\chi. \]
\end{ithm}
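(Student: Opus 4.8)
The plan is to combine the cohomological construction of $L_{\fc,p}(\chi\omega,s)$ that underlies Theorem~\ref{t:padic} with the abstract homological formalism of Spiess developed in \S5. The first step is a reformulation. The function $L_{\fc,p}(\chi\omega,s)$ is the Mellin transform $\int_{\Z_p^*}\langle x\rangle^s\,d\mu_\chi(x)$ of a single $p$-adic measure $\mu_\chi=\sum_{\fa\in G_\ff}\chi(\fa\fc)\,\mu_\fa$ on $\Z_p^*$, where $\mu_\fa$ is the measure representing the analytic distribution $\kappa\mapsto\zeta_{\ff,\fc,p}(\fa,\kappa)$ on $\cW$. Unwinding the construction, $\mu_\chi$ is obtained by forming the cap product of the class of the $p$-adic, measure-valued avatar of the smoothed Eisenstein cocycle $\Psi_\ell$ with a homology class $Z_\chi$ attached to $F$, $\chi$, $\fa$ and $\fc$, and then pushing forward to $\Z_p^*$. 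Writing $g_\chi\in\Z_p[[T]]$ for the Iwasawa power series of the push-forward of $\mu_\chi$ along $x\mapsto\langle x\rangle$ to $1+p\Z_p$, one has $\ord_{s=0}L_{\fc,p}(\chi\omega,s)=\ord_{T=0}g_\chi$.

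The second step is an elementary reduction: Theorem~\ref{t:oov} is equivalent to
\[ \mu_\chi\in I^{r_\chi}, \]
where $I\subset\Z_p[[1+p\Z_p]]$ is the augmentation ideal and $\mu_\chi$ is viewed through the push-forward above. Under the isomorphism $\Z_p[[1+p\Z_p]]\cong\Z_p[[T]]$ taking a topological generator to $1+T$ the augmentation ideal becomes $(T)$, so $\mu_\chi\in I^{r_\chi}$ means precisely $\ord_{T=0}g_\chi\ge r_\chi$. Concretely, expanding $\langle x\rangle^s=\exp\bigl(s\log\langle x\rangle\bigr)=\sum_{k\ge 0}\tfrac{s^k}{k!}(\log\langle x\rangle)^k$ and using that $\log\langle\cdot\rangle\colon\Z_p^*\to\Z_p$ is a continuous homomorphism, so that each $(\log\langle\cdot\rangle)^k$ is a polynomial function of degree $k$ while $\int_{\Z_p^*}P\,d\mu=0$ for every measure $\mu\in I^r$ and every polynomial $P$ of degree $<r$, one sees that $\mu_\chi\in I^{r_\chi}$ forces the Taylor coefficients of $L_{\fc,p}(\chi\omega,s)$ at $s=0$ in degrees $<r_\chi$ to vanish.

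The third and substantive step is to prove $\mu_\chi\in I^{r_\chi}$ by feeding the cohomological construction into Spiess's formalism. The source of the $r_\chi$ augmentation factors is the $p$-stabilization built into $\zeta_{\ff,\fc,p}$ --- the passage from $\zeta$ to $\zeta^*$ that restricts the defining sum to ideals prime to $p$. On the level of Iwasawa algebras this contributes a factor $\prod_{\fp\mid p}(1-U_\fp)$ for operators $U_\fp$ shifting an ideal by $\fp$, and on the $\chi$-isotypic component $U_\fp$ acts as $\chi(\fp)$ times a nontrivial element of the relevant cyclotomic group; thus for each of the $r_\chi$ primes $\fp_i\mid p$ with $\chi(\fp_i)=1$ the factor $1-U_{\fp_i}$ lies in the augmentation ideal $I$, and the product of these $r_\chi$ factors places $\mu_\chi$ in $I^{r_\chi}$. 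Spiess's formalism makes this heuristic rigorous at the level of the measure $\mu_\chi$ actually produced by the cap product: it exhibits the pairing of $[\Psi_\ell]$ with $Z_\chi$ as factoring through a tensor product of $r_\chi$ augmentation ideals, one for each split prime $\fp_i$. The key input from the side of the cocycle is its compatibility with restriction to parabolic subgroups: the restriction of $\Psi_\ell$ to the stabilizer in $\GL_n(\Q)$ of a $\Q$-rational flag adapted to $\fp_i$ is controlled, up to the $\ell$-smoothing factor, by Eisenstein cocycles for the Levi factors, the rank-one factor at $\fp_i$ carrying precisely the local term that lands in $I$ when $\chi(\fp_i)=1$. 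Along the way one checks that the smoothing at $\fc$ (and at $\ell=\N\fc$) contributes only the Euler-type factor $1-\chi\omega^{-k}(\fc)\,\N\fc^{1+k}$, which is a $p$-adic unit since $\fc$ is prime to $\ff p$ with prime norm $\ell\neq p$, and so does not affect the order of vanishing.

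The main obstacle will be this third step, in two respects. First, one must establish the parabolic-restriction and degeneration properties of $\Psi_\ell$ sharply enough to meet the hypotheses of Spiess's abstract lemma --- an analysis of the ``Eisenstein'' behavior of Sczech's cocycle at the cusps, made more delicate here by the $\ell$-smoothing and by the passage to the $p$-adic, measure-valued version. Second, and most delicately, one must verify that exactly $r_\chi$ independent augmentation factors appear, i.e.\ that every prime $\fp_i\mid p$ with $\chi(\fp_i)=1$ genuinely contributes one; this requires arranging the adapted flags and the smoothing and $p$-stabilization operators so that precisely these primes, and no others, feed augmentation into $\mu_\chi$. Recasting $\Psi_\ell$ in the form required by Spiess's formalism is itself part of the work of \S5. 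By contrast, the reformulation of the first step, the Iwasawa-theoretic reduction of the second, and the verification that the $\fc$-smoothing factor is a $p$-adic unit, are all routine.
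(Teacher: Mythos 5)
Your first two steps are fine but essentially tautological: passing to the Iwasawa power series $g_\chi$ and observing that $\ord_{s=0}L_{\fc,p}(\chi\omega,s)\ge r_\chi$ is equivalent to $\mu_\chi$ lying in the $r_\chi$-th power of the augmentation ideal of the \emph{cyclotomic} Iwasawa algebra just restates the theorem. The entire content is your third step, and there the argument has a genuine gap. The heuristic that the $p$-stabilization contributes a factor $\prod_{\fp\mid p}(1-U_\fp)$ cannot be taken literally: the Euler factor at $\fp\mid p$, namely $1-\chi\omega^{-k}(\fp)\N\fp^{k}$, is \emph{not} a $p$-adically continuous function of $k$ (since $\N\fp$ is a power of $p$, it equals $1-\chi(\fp)$ at $k=0$ but is a $p$-adic unit for all $k\ge1$), so it does not define an element of the cyclotomic Iwasawa algebra dividing $g_\chi$. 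The interpolation formula therefore only forces $L_{\fc,p}(\chi\omega,0)=0$, i.e.\ order of vanishing $\ge\min(r_\chi,1)$; getting $r_\chi$ is exactly the hard point, and this is why Gross's conjecture is nontrivial. Your proposed mechanism for making the heuristic rigorous --- parabolic restriction of $\Psi_\ell$ to stabilizers of flags, degeneration to Eisenstein cocycles on Levi factors --- is not what the paper does and is not obviously workable; no boundary or cusp analysis of the cocycle is used anywhere.

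What the paper actually does is the following, and each ingredient is missing from your proposal. First, Proposition~\ref{p:zetapb} expresses the relevant combination of partial zeta functions as $\int_{\O}\langle\N x\rangle^{-s}\,d\mu_\ell$, where $\O=\prod_{i=1}^{r}(\cO_{\fp_i}-\pi_i\cO_{\fp_i})\times\prod_{\fp\mid p,\,\fp\neq\fp_i}\cO_{\fp,\ff}^*$ is cut out by fundamental domains for multiplication by generators $\pi_i$ of $\fp_i^{e_i}$; the theorem reduces to $\int_{\O}(\log_p\N x)^k\,d\mu_\ell=0$ for $k<r$. Second, this integral is recognized as a cap product $\langle[\kappa_\fa],[\fL_k]\rangle$ in the (co)homology of the group $U=E\times T$, where $T=\langle\pi_1,\dots,\pi_r\rangle$ --- so the relevant augmentation ideal $I$ is that of $\C_p[T]$ acting on $C_c^\flat(F_p)$, not that of $\Z_p[[1+p\Z_p]]$. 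Third, the rigorous avatar of your ``$r_\chi$ Euler factors'' is the identity $1_{\O_R}=(1-\pi_i)\cdot 1_{\O_{R'}}$ together with the fact that $(1-\pi_i)$ lowers the degree of monomials in the $\ell_\fp(x)=\log_p\N_{F_\fp/\Q_p}x_\fp$ by at least one; an induction then shows $1_{\O}\cdot(\log_p\N x)^k\in I\cdot C_c^\flat(F_p)$ for $k<r$ (Theorem~\ref{t:sp2}). Fourth --- and this is the key homological input you do not mention --- one must convert ``the coefficient lies in $I\cdot C_c^\flat(F_p)$'' into ``the homology class $[\fL_k]$ vanishes''; this requires proving that $C_c^\flat(F_p)$ is a \emph{free} $\C_p[T]$-module (Lemma~\ref{l:free}), so that the Hochschild--Serre spectral sequence for $T\subset U$ degenerates and $H_{n-1}(U,C_c^\flat(F_p))\cong H_{n-1}(E,C_c^\flat(F_p)/I)$ (Theorem~\ref{t:sp1}). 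Without these four steps your outline remains the standard heuristic rather than a proof.
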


The result of Theorem~\ref{t:oov} was already known from Wiles' proof of the Iwasawa Main Conjecture
under the auxiliary assumption \begin{equation*}\label{eqWilesS} F_\infty \cap H = F, \end{equation*}where $H$ denotes the fixed field of $\chi$ and
 $F_\infty$ denotes the cyclotomic $\Z_p$-extension of $F$.  
This assumption %(\ref{eqWilesS}) 
(namely, that $\chi$ has ``type S'' in Greenberg's terminology) was removed by Snaith in [Sn, Theorem 6.2.5] using Brauer induction (see also [BG, pp.\,  165-166] for further discussion). 

Our method contrasts with that of Wiles in that it is purely analytic; we calculate the first $r_\chi - 1$ derivatives of $L_{\fc, p}(\chi\omega, s)$ at $s=0$ directly and show that they vanish.
Spiess proved Theorem~\ref{t:oov} as well using his formalism \cite{spiess}.  His cohomology classes are defined using Shintani's method.  In  \cite{CDG} we provide a direct comparison between the cocycles defined using the methods of Sczech and Shintani.

 \bigskip

We conclude the introduction by recalling Sczech's method, which is central to this article, and describing our integral refinement.
Let $\sP = \Q[X_1, \ldots, X_n]$. 
%and denote by $\sP^\vee$ its $\Q$-linear dual.
We denote by $\sQ$ a certain space of tuples of linear forms on $\R^n$, defined precisely in
Section~\ref{s:cocycledef}.  The spaces $\sP$ and $\sQ$ are endowed with a left action of $\Gamma = \GL_n(\Q)$ given by $(AP)(X)=P(XA).$
Let $M_\Q$ denote the $\Q$-vector space of functions  
\[ 
\phi : \sP \times \sQ \times  (\Q/\Z)^n \longrightarrow \Q \]
that are $\Q$-linear in the first variable and satisfy the distribution relation
\begin{equation}
\label{e:dist}
  \phi(P, Q,v)  = \sgn(\lambda)^n \sum_{\stack{w\in (\Q/\Z)^n}{\lambda w=v}}\phi( \lambda^{\deg P} P, \lambda^{-1} Q,w) 
  \end{equation}
for all nonzero integers $\lambda$, when $P \in \sP$ is homogeneous.   

We view the elements of $(\Q/\Z)^n$ as column vectors and define a left $\Gamma$-action on $M_\Q$ as follows.
Given $\gamma \in \Gamma$, choose a nonzero scalar multiple $A = \lambda \gamma$ with 
$\lambda \in \Z$ such that $A \in M_n(\Z)$.  For $f \in M_\Q$, define
\begin{equation} \label{e:vaction}
 (\gamma f)(P, Q, v) = \sgn(\det(A)) \sum_{r \in \Z^n / A \Z^n} f(A^t P, A^{-1} Q , A^{-1}(r + v)). 
 \end{equation}
 The distribution relation (\ref{e:dist}) implies that (\ref{e:vaction}) does not depend on the auxiliary choice of $\lambda$. 
 
Sczech defined a homogeneous cocycle 
\[ \Psi \in Z^{n-1}(\Gamma, M_\Q) \subset C^{n-1}(\Gamma, M_\Q) = \Hom_\Gamma(\Z[\Gamma^n], M_\Q)  \] called the Eisenstein cocycle, representing a class \[ [\Psi] \in H^{n-1}(\Gamma, M_\Q).\]  We recall this definition precisely in Section~\ref{s:cocycle}.
 The values at nonpositive integers of the zeta functions of all totally real fields $F$ of degree $n$ can be obtained from
certain specializations of $[\Psi]$
as follows.  

Fix a conductor $\ff$ and an integral ideal $\fa$ as above.  
Associated to $F, \ff, \fa$, and a nonnegative integer $k$, we define a certain homology class
\[ [\fZ_{\fa, \ff, k}] \in H_{n-1}(\Gamma, M_\Q^\vee), \]
where $M_\Q$ denotes the $\Q$-linear dual of $M_\Q$.  Fix a $\Z$-basis $\{w_1, \dots, w_n\}$ 
for  $\fa^{-1}\ff$.  Let $P \in \Z[X_1, \dots, X_n]$ denote the homogeneous polynomial of degree $n$ given up to scalar by
the norm:
\begin{equation} \label{e:Pdef}
 P(X_1, \dotsc, X_n) = \N(\fa)\N(w_1X_1 + \cdots + w_nX_n).
 \end{equation}
Let $Q = (Q_1, \dotsc, Q_n)$ be the $n$-tuple of linear forms given by
\begin{equation} \label{e:Qdef} Q_i = \tau_i(w_1^*)X_1 + \cdots + \tau_i(w_n^*)X_n,  \end{equation}
where $\{w_1^*, \dotsc, w_n^*\}$ denotes the dual basis  with respect to the trace form on $F$, and 
the $\tau_i$ for $i=1, \dotsc, n$ denote the embeddings $F \hookrightarrow \R$.
Let \begin{equation} \label{e:vdef} v = (\Tr(w_1^*), \dotsc, \Tr(w_n^*)). \end{equation}
Denote by $f_k = f_{P, Q, v}$ the element of $M_\Q^\vee$ defined by evaluation at $(P^k, Q, v)$:
\[ f_k(\phi) := \phi(P^k, Q, v). \]

Finally, we define elements $A_1, \dotsc, A_{n-1} \in \Gamma$ by considering 
 the action of a basis of totally positive units of $F$ congruent to 1 modulo $\ff$ via multiplication on 
$\fa^{-1}\ff$, in terms of the basis $\{w_i\}$ for $\fa^{-1}\ff$.  
Homogenizing and symmetrizing the tuple $(A_1, \dots, A_{n-1})$ yields
a certain homogeneous $(n-1)$-chain $\sA \in \Z[\Gamma^n]$ (see~(\ref{e:adef}) for the precise formula).

We then define
\[ \fZ_{\fa, \ff, k} = \sA \otimes f_k \in C_{n-1}(\Gamma, M_\Q^\vee) = \Z[\Gamma^n] \otimes_\Gamma M_\Q^\vee.  \]
Using the definition of $\sA$ in (\ref{e:adef}), it follows from the fact that the $A_i$ commute and that $f_k$ is invariant under the $A_i$ that $\fZ_{\fa, \ff, k}$ is in fact an $(n-1)$-cycle.  The homology class that it represents depends only on $\ff$, $\fa$, and $k$, and not on any other choices made.

The cap product yields a canonical pairing 
\[
 \langle \cdot, \cdot \rangle: H^{n-1}(\Gamma, M_\Q)  \times H_{n-1}(\Gamma, M_\Q^\vee) \longrightarrow \Q \]
 given by $\langle [\Psi], [\sA \otimes f] \rangle = f(\Psi(\sA))$ and extended by linearity.
 Sczech proved the formula
\begin{equation} \label{e:special}
 \zeta_{\ff}(\fa,  - k) = \langle [\Psi], [\fZ_{\fa, \ff, k}] \rangle  \in \Q
\end{equation}
for integers $k \ge 0$, thereby completing his proof of the Siegel--Klingen rationality theorem.

% construction of 
% $\zeta_{\ff, \fc, p}(\fa, s)$.  Namely, she showed that if $\fc$ is a cyclic ideal satisfying a certain compatibility with the domain $D$, then the smoothed
% zeta-values  $\zeta_{\ff, \fc}(\fa, s)$  at nonpositive integers $s$ are $p$-adically integral.  In fact, more is true; only primes dividing $\N\fc$ may appear in the denominators of these values, and 
%one may write down an explicit bound on the exponents to which these primes appear.  In particular, if the primes dividing $\N\fc$ have norm greater than $n+1$,
% then the smoothed zeta values at $s=0$ are integers.

\bigskip

In this paper, we fix a prime $\ell$ and consider the congruence subgroup $\Gamma_{\ell} \subset \Gamma \cap \GL_n(\Z_\ell)$
consisting of matrices whose first column has all elements but the first divisible by $\ell$.  We define a cocycle  
$\Psi_\ell^+ \in Z^{n-1}(\Gamma_{\ell}, M_\Q)$ derived from Sczech's $\Psi$ by smoothing at the prime $\ell$ (see (\ref{e:psielldef}) for a precise formula).  
We also define a cocycle $\Psi_\ell$ refining $\Psi_\ell^+$ from which $\Psi_\ell^+$ can be recovered by projecting on to the subspace invariant under the action of multiplication by $-1$ on $\sQ$.
  Our key result is the following.
  
\begin{ithm}  \label{t:main}
The cocycle $\Psi_\ell$ takes values in  the $\Z[\frac{1}{\ell}][\Gamma_{\ell}]$-submodule $M_\ell \subset M_\Q$
consisting of distributions $\phi$ such that $\phi(P, Q, v) \in \frac{1}{m}\Z[\frac{1}{\ell}]$ when $P \in \Z[\frac{1}{\ell}][X_1, \dots, X_n]$ is homogeneous and has the property
\begin{equation} \label{e:pcond}
 P(v + {\textstyle\frac{1}{\ell}}\Z \oplus \Z^{n-1}) \subset \Z[{\textstyle\frac{1}{\ell}}],
 \end{equation}
and $Q \in \sQ$ is an $m$-tuple of linear forms.
 The cocycle $\Psi_\ell^+$ takes values in $\frac{1}{2} M_\ell$.
\end{ithm}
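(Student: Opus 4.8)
\emph{Strategy and reduction.} The plan is to reduce the integrality assertion to an estimate for the single ``elementary symbol'' out of which $\Psi_\ell$ is built, and to establish that estimate by combining the explicit formula for Sczech's cocycle with the classical $\ell$-stabilization lemma for Bernoulli numbers. One first checks that $M_\ell$ is indeed a $\Z[\tfrac1\ell][\Gamma_\ell]$-submodule of $M_\Q$: a matrix $\gamma\in\Gamma_\ell$ lies in $\GL_n(\Z_\ell)$ with $\gamma_{i1}\in\ell\Z_\ell$ for $i\ge2$, hence $\gamma_{11}\in\Z_\ell^\times$, so $\gamma$ stabilizes the lattice $L=\tfrac1\ell\Z_\ell e_1\oplus\Z_\ell e_2\oplus\cdots\oplus\Z_\ell e_n$; consequently the action (\ref{e:vaction}) carries a pair $(P,v)$ satisfying (\ref{e:pcond}) to another such pair (after the auxiliary integral rescaling permitted by (\ref{e:dist})) and preserves the integer $m$ attached to $Q$. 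It therefore suffices to show, for every admissible triple $(P,Q,v)$ — with $P\in\Z[\tfrac1\ell][X_1,\dots,X_n]$ homogeneous satisfying (\ref{e:pcond}) and $Q$ an $m$-tuple of linear forms — and every $\sigma\in\Gamma_\ell^{\,n}$, that $\Psi_\ell(\sigma)(P,Q,v)\in\tfrac1m\Z_p$ for all primes $p\ne\ell$, equivalently that it lies in $\tfrac1m\Z[\tfrac1\ell]$.

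\emph{The building block.} Next one invokes the explicit description of $\Psi$ recalled in Section~\ref{s:cocycle}. Writing $\sigma$ via integral matrix representatives $A_i=\lambda_i\gamma_i\in M_n(\Z)$ and running the homogeneous cocycle recipe, one expresses $\Psi(\sigma)(P,Q,v)$ as a finite sum whose terms have the shape $\tfrac1m\,c\cdot\prod_i H_i$, where $c$ is an integral expression in the minors of the $A_i$ and the coefficients of the forms of $Q$, and each $H_i$ is a Hurwitz-type special value $\zeta(1-j_i,x_i)=-\overline B_{j_i}(x_i)/j_i$ ($\overline B_j$ the periodicized Bernoulli polynomial) whose argument $x_i$ is an affine-linear form in the coordinates of $v$ with coefficients read off from the $A_i$, the total weight being governed by $\deg P$. (For $P=P^k$ coming from $F$-data this is, after reindexing, Shintani's formula; only this structural shape is needed here, and degenerate configurations are handled by the usual limiting convention in Sczech's formula. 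Alternatively one argues with the generating power series attached to $\Psi$.)

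\emph{The crux: $\ell$-stabilization.} Unwinding the smoothing operation attached to $\pi=\mathrm{diag}(\ell,1,\dots,1)$ by means of the distribution relation (\ref{e:dist}) with $\lambda=\ell$ — the same manipulation that turns $\zeta_\ff(\fa,s)$ into $\zeta_\ff(\fa\fc,s)-\N\fc^{1-s}\zeta_\ff(\fa,s)$ — replaces the first-coordinate factor of each product $\prod_i H_i$ above by its $\ell$-stabilization. One then invokes the classical fact that the $\ell$-stabilized Bernoulli distribution is $\Z_p$-integral for $p\ne\ell$: for a positive integer $D$, a prime $p\ne\ell$, an integer $j\ge1$ and $x\in\tfrac1D\Z$,
\[ \frac{D^{\,j-1}}{j}\Big(\ell^{\,j}\,\overline B_j(x)-\overline B_j(\ell x)\Big)\in\Z_p . \]
With this in hand, the argument finishes by checking that hypothesis (\ref{e:pcond}) on $(P,v)$ is exactly what is needed to put the arguments $x_2,\dots,x_n$ (coming from the coordinates $\pi$ does not touch) into a form where their residual $p$-denominators are cancelled by the coefficients of $P$, while the first coordinate is controlled by the displayed lemma and the von Staudt--Clausen denominators of all the $H_i$ are cleared simultaneously by the stabilizing twist. (The smallest instance is the toy case over $\Q$ with $\ff=\fa=(1)$, $\fc=(\ell)$, where $\zeta_{\ff,\fc}(\fa,-1)=(1-\ell^2)\zeta(-1)=(\ell^2-1)/12\in\Z[\tfrac1\ell]$.) This delivers the estimate of the first paragraph.

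\emph{Conclusion and the main obstacle.} Assembling the three steps shows that $\Psi_\ell$ takes values in $M_\ell$. The cocycle $\Psi_\ell^+$ is obtained from $\Psi_\ell$ by averaging over the action of $-1$ on $\sQ$; since $-1$ sends an $m$-tuple of linear forms to an $m$-tuple and fixes $(P,v)$, each value $\Psi_\ell^+(\sigma)(P,Q,v)$ equals $\tfrac12$ times a sum of two values of $\Psi_\ell$ at admissible triples, hence lies in $\tfrac12M_\ell$, completing the proof. The main obstacle is the third step: one must pin down precisely how the single-coordinate smoothing attached to $\pi$ propagates through Sczech's formula to an honest $\ell$-stabilization, and — crucially — verify that the comparatively weak hypothesis (\ref{e:pcond}) (rather than a stronger integrality condition relating $P$ and $v$) already suffices to absorb the $p$-denominators, $p\ne\ell$, contributed by the $n-1$ coordinates that $\pi$ does not see. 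By contrast, the passage to integral representatives and the treatment of degenerate configurations are routine within Sczech's framework.
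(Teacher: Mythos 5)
Your overall skeleton matches the paper's: reduce to showing $p$-integrality for each prime $p\neq\ell$, write $\Psi$ as a Dedekind sum of products of Bernoulli functions, and let the smoothing plus hypothesis (\ref{e:pcond}) kill the denominators. But the step you yourself flag as the ``main obstacle'' is exactly the content of the theorem, and the mechanism you propose for it is not the right one. The smoothing by $\pi_\ell=\mathrm{diag}(\ell,1,\dots,1)$ does \emph{not} manifest as an $\ell$-stabilization of a single one-variable Bernoulli factor in each product. After cancelling the terms with $d\neq\mathbf 1$ (Lemma~\ref{compareXtoX'}), the smoothed Dedekind sum $\cD_\ell(\sigma,e,Q,v)$ is a difference of two full $n$-dimensional lattice sums over \emph{different} quotients $\Z^n/\sigma_\ell\Z^n$ and $\Z^n/\sigma\Z^n$; the paper's Lemma~\ref{l:decompose} repackages this as a sum of $n$-dimensional Bernoulli sums $\sB_e^{\sL,z}$ restricted by a congruence $\sL(y)=z\bmod\ell$, where $\sL$ is a linear form built from the first row of $\sigma$ and involves \emph{all} $n$ coordinates. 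The integrality of these restricted sums in the base case $e=\mathbf 1$ (Proposition~\ref{p:simplecase}) is proved by detecting the congruence with additive characters mod $\ell$, evaluating the resulting cyclotomic Dedekind sums (Lemma~\ref{lemmaChi}), and recognizing the answer as $-\Tr_{\Q(\zeta_\ell)/\Q}$ of an expression whose denominator $\prod_j(\zeta_\ell^{a_j}-1)$ is supported at $\ell$. Your one-variable lemma $\frac{D^{j-1}}{j}\bigl(\ell^j\overline B_j(x)-\overline B_j(\ell x)\bigr)\in\Z_p$ cannot substitute for this: applied to the first coordinate only, it leaves the von Staudt--Clausen denominators and the $1/j_i$ factors of the remaining $n-1$ Bernoulli values untouched, and (\ref{e:pcond}) is a condition on the values of $P$, not on those Bernoulli denominators, so it cannot ``absorb'' them.

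The second missing ingredient is how higher weights are reduced to weight $\mathbf 1$. In the paper this is Proposition~\ref{p:crucial}: a Kubota--Leopoldt--type congruence
$p^{M\underline r}\,\N(r+\mathbf 1)^{-1}\sB_{\mathbf 1+r}^{\sL,z}(x/p^M,Q)\equiv\sB_{\mathbf 1}^{\sL,z}(x/p^M,Q)\,\N x^{r}\pmod{p^{M-\epsilon}}$, obtained from the expansion $b_k(x)=x^k-\frac k2x^{k-1}+\cdots$. Combined with the distribution relation at level $p^M$, it shows $\Psi_\ell(\sA,P,Q,v)$ is congruent mod $\Z_p$ to a sum of terms $\sB_{\mathbf 1}^{\sL,z}(\cdot)\,P(\sigma(y+k)/\ell)$; hypothesis (\ref{e:pcond}) enters exactly once, to guarantee $P(\sigma(y+k)/\ell)\in\Z[\frac1\ell]\subset\Z_p$. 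So the roles are: (\ref{e:pcond}) controls the polynomial values, the trace computation controls the weight-one restricted sums, and the $p$-adic approximation bridges the two. Your proposal identifies none of these three mechanisms concretely, so as written it does not constitute a proof. (Your reduction to $p\neq\ell$, the submodule check, and the deduction of the $\frac12M_\ell$ statement for $\Psi_\ell^+$ by averaging over $Q\mapsto -Q$ are all fine and agree with the paper.)
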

%(Here $v^t$ denotes the transpose of $v$, since we have adopted the convention that $P$ operates on row vectors, and $v$ is a column vector.)
  In particular, taking $P = 1$, we find that $\Psi_\ell(\sA, 1, Q, v) \in \frac{1}{m}\Z[1/\ell]$ for all $\sA, Q,$ and $v$.
This integrality property of our cocycle $\Psi_\ell$ lies in sharp contrast to Sczech's $\Psi$, which assumes fractional values with $p$-adically unbounded denominator for each $p$
as $v$ varies (with $P=1$ and fixed $\sA, Q$).

Now let $F$ be as above, and let $\fc$ be a prime ideal of $F$ with norm $\ell$.
  Using Sczech's formula (\ref{e:special}), we prove that certain specializations of the class $\Psi_\ell^+$ yield the smoothed partial zeta functions of $F$ at nonpositive integers:
\begin{equation} \label{e:intscz}
 \zeta_{\ff, \fc}(\fa, -k) = \langle [\Psi_\ell^+], [\fZ_{\fa, \ff, k, \ell}] \rangle 
\end{equation}
where $[\fZ_{\fa, \ff, k, \ell}] \in H_{n-1}(\Gamma_{\ell}, M_\ell^\vee)$ is defined similarly to $\fZ_{\fa, \ff, k}$, but slightly modified to account for the $\ell$-smoothing (see~(\ref{e:Pdef2})--(\ref{e:adef}) in Section~\ref{s:integral} for the precise definition).
Here $M_\ell^\vee$ denotes the $\Z[\frac{1}{\ell}]$-dual of $M_\ell$.
Since $H^{n-1}(\Gamma_{\ell}, M_\ell)$ and $H_{n-1}(\Gamma_{\ell}, M_\ell^\vee)$ pair to $\Z[\frac{1}{\ell}]$,
we arrive at our proof of Theorem~\ref{t:integral}.\footnote{In fact, since the cocycle $\Psi_\ell^+$ defined from Sczech's method takes values in $\frac{1}{2} M_\ell$, and since the $Q$ defined in (\ref{e:Qdef}) and used by Sczech is an $n$-tuple, the results of this paper prove that the value $\zeta_{\ff, \fc}(\fa, s)$ lies 
in $\frac{1}{2n} \Z[1/\ell]$ for nonpositive integers $s$.  The factor $1/(2n)$ can be eliminated by considering the refined cocycle $\Psi_\ell$ and proving that one need only consider an individual linear form $Q_i$ rather than the entire tuple $Q$.  These aspects are studied in \cite{CDG}; see Remark~\ref{r:cdg} and Section~\ref{s:integral} below for further details.}

 We prove Theorem~\ref{t:padic}
by interpreting the cocycle $\Psi_\ell$ in 
terms of $p$-adic measures and thereby defining a certain measure-valued cocycle $\mu_\ell$. 
Using the cohomology class $[\mu_\ell]$ and applying equation (\ref{e:intscz}), we 
construct the $p$-adic zeta functions $\zeta_{\ff, \fc, p}(\fa, s)$ with the desired interpolation property. 
Theorem~\ref{t:oov} is proven by using our cohomological construction of the $p$-adic $L$-function to recognize the 
values $L_{\fc, p}^{(k)}(\chi \omega, 0)$ for nonnegative integers $k$ as the pairing of $[\mu_\ell]$ with
certain homology classes denoted $[\fL_k]$, and applying results of Spiess (Theorems~\ref{t:sp1} and~\ref{t:sp2}) that implies that these homology classes vanish for $k < r_\chi$.

\bigskip

The fact that the $p$-adic zeta functions of all totally real fields of degree $n$ that contain a prime of norm $\ell$ arise as the specializations of a single cohomology class $[\Psi_\ell]$ is striking.  
It seems promising to study these $p$-adic zeta functions by taking certain {\em other} specializations of the same class. 
 As an example of this phenomenon, we hope to study Gross's conjectural formula for the leading term $L_{\fc, p}^{(r_\chi)}(\chi \omega, 0)$ in future work, building on our prior investigations (\cite{darmon-dasgupta}, \cite{dasgupta}).
% We prove that these specializations satisfy the {\em analytic} property of Gross's conjecture (i.e.\ that their norms are related to the derivatives of the appropriate $p$-adic zeta functions), but  we leave open the question of their {\em algebraicity}. 
Also in future work (\cite{CDG}), we will demonstrate how to provide an alternate construction of the Eisenstein class $[\Psi_\ell]$ using Shintani's method, following prior works by Solomon, Hill, Colmez, and the second author.
 
This paper is organized as follows.  In Section~\ref{s:cocycle}, we give the precise definition of Sczech's cocycle $\Psi$ 
and recall the formula for $\Psi$ in terms of Dedekind sums derived in \cite{GS}.   In Section~\ref{s:smoothed} we  define our smoothed cocycle $\Psi_\ell^+$ and its refinement $\Psi_\ell$.
We prove Theorem~\ref{t:main}, the key integrality result concerning $\Psi_\ell$ and the technical heart of the paper.  In Section~\ref{s:integral} we combine Theorem~\ref{t:main} with a suitable generalization of Sczech's formula (\ref{e:special}) to prove Theorem~\ref{t:integral}.  In Section~\ref{s:padic}, we interpret our construction in terms of $p$-adic measures, and thereby prove Theorem~\ref{t:padic}.  We conclude in Section~\ref{s:oov} by proving Theorem~\ref{t:oov} using Spiess' results.

\section{The Eisenstein cocycle} \label{s:cocycle}

The goal of this section is to recall the definition of Sczech's cocycle $\Psi$ and some of its salient properties.
%  and to define our smoothed cocycle $\Psi_\ell^+$.

\subsection{Sczech's Eisenstein cocycle for $\GL_n(\Q)$} \label{s:cocycledef}

We begin by recalling Sczech's  Eisenstein cocycle for $\Gamma = \GL_n(\Q)$. See \cite{Sc2}, \cite[Section 6]{GS}, or \cite{CGS}
 Section 2 for a more detailed exposition.

\medskip
Let $\sA =(A_1,\ldots,A_n)\in \Gamma^n$ be an $n$-tuple of matrices.
 Fix $x \in \R^n - \{0\}$.  For each matrix $A_i$, let $\sigma_i$ denote the first (i.e.\ leftmost) column of $A_i$ such that 
 $\langle x,\sigma_i \rangle \neq 0$.
Denote by $\sigma = (\sigma_{ij})$ the square matrix with columns  $\sigma_i$, for $1\leq i \leq n$, and define
\[
\psi_{\sA}(x)=\frac{\det(\sigma)}{\langle x,\sigma_1\rangle \cdots\langle x,\sigma_n\rangle }.
\]

More generally, for any homogeneous polynomial $P(X_1, \ldots, X_n)$, we consider the partial differential operator $P(-\partial_{x_1}, \ldots, -\partial_{x_n})$ and   define the function
\begin{align}\psi_{\sA}(P, x)=&\,P(-\partial_{x_1}, \ldots, -\partial_{x_n})
\psi_{\sA}(x) \nonumber \\
=&\, \det(\sigma)\sum_r P_r(\sigma)\prod_{j=1}^{n} \frac {1} {\langle x,\sigma_j\rangle^{1+r_j} } \label{e:psipdef},
\end{align}
where $r=(r_1,\dotsc, r_n)$ runs over all partitions of $\deg(P)$ into nonnegative integers $r_j,$
and $P_r(\sigma)$ is the homogeneous polynomial in the $\sigma_{ij}$ satisfying  the relation 
\begin{equation} \label{e:prdef}
P(X\sigma^t)=\sum_r P_r(\sigma)\frac {X_1^{r_1}\cdots X_n^{r_n}}{r_1!\cdots r_n !}.
\end{equation}

The Eisenstein cocycle is essentially given by summing the value of $\psi_{\sA}(P, x)$ over all $x \in \Z^n - \{0\}$:
\begin{equation} \label{e:fakepsi}
 \text{ `` } \Psi(\sA,P, v)=(2\pi i)^{-n-\deg(P)}\sum_{x \in \Z^n - \{0\}} \textbf{e}(\langle x,v\rangle)\psi_{\sA}(P, x). \text{ '' }
\end{equation}
The sum in (\ref{e:fakepsi}) converges only conditionally, so to make sense of it one uses Sczech's $Q$-summation trick.
To this end we fix a family of $m$  linear  forms $Q_1, \dotsc, Q_m$
on $\R^n$ such that each form $Q_i$ is nonvanishing on $\Q^n-\{0\}.$ Let $\sQ$ denote the set of such $m$-tuples $Q = (Q_1, \dotsc, Q_m)$ of linear forms. 
 We view each $Q_i$ as a row vector, and for any row vector $x\in \R^n$ we
 adopt the notation 
 \begin{equation} \label{defQ}
 Q_i(x)= Q_i x^t =\sum_{j=1}^n Q_{ij} x_j, \qquad
Q(x)=\prod_{i=1}^m Q_i(x).\end{equation}
We can identify $Q$ with an $m\times n$  matrix with real rows $Q_i.$
The set $\sQ$ is endowed with a left action of $\Gamma$
described in terms of matrices by $AQ \leftrightarrow (Q_{ij})A^t$.  The action with respect to the corresponding functions on row vectors
is given by $AQ(x) = Q(xA)$.

Given   $Q \in \sQ,$ and a sequence $a(x)$ indexed by a subset $L$ of a lattice in $ \R^n$, the $Q$-summation of $a(x)$ over $L$ is defined by
\begin{equation}
\sum_{x\in L} a(x)\!\! \mid_Q \ =\lim_{t\rightarrow \infty} \sum_{\stack{x\in L}{|Q(x)| <t}} \!\! a(x),
\label{e:qsumlimit}
\end{equation}
under the assumption that the sum of $a(x)$ over the $x \in L$ such that $|Q(x)| < t$ converges absolutely for all $t$, and that the limit in (\ref{e:qsumlimit}) exists.
Sczech proved that the $Q$-summation
\begin{equation}\label{defPsi}\Psi(\sA,P, Q,v)=(2\pi i)^{-n-\deg(P)}\sum_{x\in \Z^n - \{0\}} \textbf{e}(\langle x,v\rangle)\psi_{\sA}(P, x)
\! \mid_Q \end{equation}
exists.

%For any $Q\in \sQ$, the Eisenstein cocycle $\Psi$ is defined as 
%the  $Q$-limit sum of these rational functions over $x\in \mb Z^n-\{0\}$  with respect to the partition 
%(\ref{partition}):
%\begin{equation}\label{defPsi}\Psi(\sA,P, Q,v)=(2\pi i)^{-n-\deg(P)}\sum_{d}\sum_{x\in X(d)\cap \Z^n} \textbf{e}(\langle x,v\rangle)\psi_{\sA, d}(P, x)
%\! \mid_Q\!. \end{equation}
% The expression of $\Psi$ as a finite sum of  Dedekind sums is straightforward : 
% \begin{equation}\label{PsiPS}
% \Psi(\sA,P, Q,v)=\sum_{d\in D} \sum_r P_r(\sigma) S(L,\sigma,\mf{1}+r,v,Q),
% \end{equation}
% where 
% \begin{equation} \label{e:ldef}
% L = L(\sA, d) = \sA(d)^\perp \bigcap \Z^n 
% \end{equation}
% is a sublattice of $\Z^n$.
%  (On the right side of (\ref{PsiPS}) we have suppressed the dependence of $L$ and $\sigma$ on $\sA$ and $d$ to lighten the notation.)
% We extend the definition of $\Psi$ from the space of homogeneous polynomials to $\sP = \Q[X_1, \ldots, X_n]$ by linearity.

Let us recall some notation from the Introduction.
Define the $\Q[\Gamma]$-module $M_{\Q}$ to be the space of functions  
\[ 
\phi :  \sP \times \sQ \times  \Q^n/\Z^n \longrightarrow \Q \]
that are $\Q$-linear in the first variable and satisfy the distribution relation
\[
  \phi(P, Q,v) = \sgn(\lambda)^n \sum_{\stack{w\in (\Q/\Z)^n}{\lambda w=v}}\phi( \lambda^{\deg P} P, \lambda^{-1} Q,w) 
\]
for all nonzero integers $\lambda$, when $P \in \sP$ is homogeneous. The $\Gamma$-action on $M_\Q$ is given in equation (\ref{e:vaction}).
  Sczech proved:

\begin{theorem} \label{theoremSczechcocycle}The  map  $\sA\mapsto \Psi(\sA,\cdot , \cdot, \cdot)$ defines a $M_\Q$-valued  homogenous  $(n-1)$-cocycle on $\Gamma$.
    Moreover, it represents a non-trivial cohomology class in $H^{n-1}(\Gamma, M_\Q)$.
  \end{theorem}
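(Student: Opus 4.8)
The plan is to verify the two assertions of Theorem~\ref{theoremSczechcocycle} separately: first that $\sA \mapsto \Psi(\sA, \cdot, \cdot, \cdot)$ is a well-defined element of $M_\Q$ for each $\sA$, satisfies the homogeneous cocycle relations on $\Gamma$, and is $\Gamma$-equivariant for the action (\ref{e:vaction}); and second that the resulting class $[\Psi] \in H^{n-1}(\Gamma, M_\Q)$ is nonzero. Throughout I would follow Sczech's original arguments in \cite{Sc2}, with the streamlined presentations of \cite{GS} and \cite{CGS} as a guide, and I would keep the analytic input (convergence of the $Q$-summation in (\ref{defPsi})) as a black box since it is asserted just above the statement.

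First I would establish that $\Psi(\sA, \cdot, \cdot, \cdot) \in M_\Q$. Linearity in $P$ is immediate from the definition of the differential operator $P(-\partial_{x_1}, \dots, -\partial_{x_n})$ acting on $\psi_\sA(x)$ in (\ref{e:psipdef}). For the distribution relation (\ref{e:dist}): substituting $x \mapsto \lambda x$ in the sum over $\Z^n - \{0\}$ reindexes it as a sum over $\lambda \Z^n - \{0\}$, and then the orbit of $v$ under multiplication-by-$\lambda^{-1}$ in $(\Q/\Z)^n$ together with the homogeneity degrees of $\psi_\sA$ in $x$ (degree $-n - \deg P$) and of $P_r(\sigma)$ produces exactly the factor $\sgn(\lambda)^n \lambda^{\deg P}$ and the replacement $Q \mapsto \lambda^{-1} Q$; one checks that $Q$-summation is compatible with this rescaling since $|(\lambda^{-1}Q)(\lambda x)| = |Q(x)|$. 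Next, the homogeneous cocycle identity $\sum_{i=0}^{n} (-1)^i \Psi(A_0, \dots, \widehat{A_i}, \dots, A_n) = 0$: this is the geometric core of Sczech's construction. The key is the pointwise identity for the rational functions $\psi_\sA(x)$, which reduces (after applying the differential operator $P(-\partial)$, which commutes with the alternating sum) to the statement that for any $n+1$ column vectors the corresponding alternating sum of the $\psi$'s vanishes identically as a function of $x$. This in turn follows from the cocycle property of the function assigning to a tuple of vectors $(\sigma_1, \dots, \sigma_n)$ the quantity $\det(\sigma)/\prod \langle x, \sigma_j\rangle$ — essentially a determinant/linear-algebra identity expressing that a certain simplicial chain is a cycle — combined with the rule selecting $\sigma_i$ as the first column of $A_i$ not orthogonal to $x$. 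Summing the pointwise identity against $\mathbf{e}(\langle x, v\rangle)$ over $\Z^n - \{0\}$ and passing to the $Q$-limit gives the cocycle relation for $\Psi$.

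Then I would check $\Gamma$-equivariance: that $(\gamma \Psi)(\sA, P, Q, v) = \Psi(\gamma \sA, P, Q, v)$ with the action (\ref{e:vaction}) on the left. Writing $A = \lambda\gamma \in M_n(\Z)$, the change of variables $x \mapsto xA^t$ (or $x \mapsto A^{-1}x$, depending on conventions) in the defining sum, together with the decomposition $\Z^n = \bigsqcup_{r \in \Z^n/A\Z^n} (r + A\Z^n)$ to account for the index, reproduces the sum over cosets in (\ref{e:vaction}); the factor $\sgn(\det A)$ comes from how $\det(\sigma)$ transforms and how $\psi_\sA$ picks up the sign, and the substitution $v \mapsto A^{-1}(r+v)$ tracks the translation. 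One must check that the auxiliary $\lambda$ drops out, which is exactly why the distribution relation was imposed on $M_\Q$ in the first place. Finally, for nontriviality of $[\Psi]$: here I would invoke the specialization formula (\ref{e:special}), $\zeta_\ff(\fa, -k) = \langle [\Psi], [\fZ_{\fa, \ff, k}]\rangle$, which will be proven (in the generalized form) in Section~\ref{s:integral}. Since there exist totally real fields $F$ of degree $n$ and data $\ff, \fa$ for which some value $\zeta_\ff(\fa, -k)$ is nonzero — e.g. $\zeta_\ff(\fa, 0) \ne 0$ for a suitable class, or $\zeta_F(-1) \ne 0$ for real quadratic $F$ — the pairing of $[\Psi]$ with the homology class $[\fZ_{\fa, \ff, k}]$ is nonzero, so $[\Psi] \ne 0$ in $H^{n-1}(\Gamma, M_\Q)$.

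The main obstacle is the cocycle relation, specifically the pointwise rational-function identity underlying it: one has to handle carefully the case distinctions in the rule ``first column $\sigma_i$ of $A_i$ with $\langle x, \sigma_i\rangle \ne 0$'' — different $x$ select different columns, and the combinatorics of how the selected configurations fit together into an exact simplicial identity is the delicate part (this is where Sczech's "first nonvanishing column" device does its work). Secondarily, one must be somewhat careful that all manipulations — reindexing sums, commuting the differential operator $P(-\partial)$ past the summation, interchanging the alternating sum with the $Q$-limit — are legitimate given only conditional convergence; here one leans on the absolute convergence built into the definition (\ref{e:qsumlimit}) of $Q$-summation at each truncation level $t$ and uniformity in the finitely many terms of the cocycle relation.
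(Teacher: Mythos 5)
Your proposal is correct and matches the paper's approach: the paper gives no independent proof of this theorem, simply citing \cite[Theorem 4]{Sc2} restricted to $u=0$, and your outline (linearity and the distribution relation for membership in $M_\Q$, the pointwise rational-function identity behind the cocycle relation, $\Gamma$-equivariance via change of variables, and nontriviality via the specialization to nonzero zeta values) is a faithful sketch of Sczech's original argument that the citation points to.
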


This is  \cite[Theorem 4]{Sc2}, restricted to $u =0$.

\subsection{Decomposition} \label{s:decomp}

For future calculations, it is convenient to decompose the sum in (\ref{defPsi}) according to the various matrices $\sigma$ that may occur.
To this end, for each $d = (d_1, \dotsc, d_n) \in \Z^n$ such that $1 \le d_i \le n$, let $\sigma(d)$ denote the $n \times n$ matrix 
whose $i$th column is the $d_i$th column of $A_i$.  Let $X(A, d) = X(d) \subset \R^n - \{0\}$ denote the set of $x$ whose associated 
matrix $\sigma$ is equal to $\sigma(d)$, i.e.\ such that the first column of $A_i$ not orthogonal to $x$ is the $d_i$th, for each
$i = 1, \dotsc, n$.

Write $\1 = (1, 1, \dotsc, 1)$, and for an $n$-tuple $e = (e_1, \dotsc, e_n)$, write $\underline{e} = \sum_{i=1}^{n} e_i$.
Gathering the terms together according to the finite partition $\{X(d)\}_d$ of $\R^n-\{0\},$ we  obtain
\begin{equation}\label{PsiPS}
\Psi(\sA,P, Q,v)=(-1)^n\sum_{d} \sgn(\det(\sigma(d)) \sum_r \frac{P_r(\sigma)}{(\1 + r)!} \cD^+(X(d) \cap \Z^n,\sigma(d) ,\mf{1}+r,Q, v),
\end{equation}
where 
\begin{equation}\label{defD}
\cD^+(L,\sigma,e, Q, v) =\frac {(-1)^n |\det(\sigma)| e!} {(2\pi i)^{\underline{e}} } \sum_{x\in  L} 
\frac {\textbf{e}(\langle x,v\rangle)}{\langle x, \sigma_1\rangle^{e_1}\cdots \langle x, \sigma_n\rangle^{e_n}}{|}_Q.
\end{equation}
We have introduced the factor $ (-1)^n |\det(\sigma)| e!$ in (\ref{defD}) to simplify future calculations.

\subsection{Bernoulli distributions} \label{s:bern}

The sums in (\ref{defD}) are Dedekind sums that can be evaluated in terms of Bernoulli distributions,
whose definition we now recall.
For each integer $k \ge 0$, the Bernoulli polynomial $b_k(x)$ is defined by 
the generating function 
\[ \frac{te^{xt}}{e^t - 1} = \sum_{k=0}^{\infty} b_k(x)\frac{t^k}{k!}. \]
For $k \neq 1$, we define the periodic Bernoulli function $B_k(x) = b_k(\{x\})$, where
$\{x\} \in [0, 1)$ denotes the fractional part of $x$.
For $k=1$, we reconcile the discrepancy between $b_1(0)$ and $b_1(1)$ by defining
\[ B_1(x) = \begin{cases} 
b_1(\{x\}) = \{x\} - 1/2 & x \not \in \Z \\
0 & x \in \Z. 
\end{cases}
\]

Given an $n$-tuple of positive integers $e=(e_1,\dotsc,e_n)$ and an element $x \in \R^n$,  
define
$$
\sB_e(x)=\prod_{j=1}^n B_{e_j}(x_j).
$$ 
Writing $\underline{e} = \sum e_j$, the function
$\sB_e$ provides a $\Q$-valued  distribution on $(\Q/\Z)^n$ of weight $\underline{e} - n$,
in the sense that for each integer $N$, we have
\begin{equation}
\sB_e(x)=N^{\underline{e}-n} \!\!\!\! \sum_{y\in (\Z/N\Z)^n} \sB_e\left(\frac{x +y}{N}\right).
\end{equation}
\medskip

To relate the distributions $\sB_e$  to Sczech's cocycle $\Psi$, they must be altered by a defect 
arising from the $Q$-limit summation process discussed in Section~\ref{s:cocycledef}.

\begin{definition}
Let $e=(e_1,\ldots, e_n)$ be a vector of positive integers, and $v\in (\Q/\Z)^n.$
Let 
\begin{equation}\label{defJ}
J=\{1\leq j\leq n \, \mid  e_j=1  \textrm{ and } v_j\in \Z\}.\end{equation}
Define
$$\sB_e(v,Q) = \frac 1 m \sum_{i=1}^m \left(\prod_{j\in J} \frac{\sign Q_{ij}} 2 \right)  \prod _{j\notin J}  B_{e_j}(v_j).$$
(In particular, if $J$ is empty, then  $\sB_e(v,Q)= \sB_e(v)$ does not depend on $Q$.) Define
\[\sB_e^+(v, Q) = \frac{1}{2}(\sB_e(v, Q) + \sB_e(v, -Q)) = \begin{cases} \sB_e(v, Q) & \text{ if } \#J \text{ is even,} \\
0 & \text{ otherwise.}
\end{cases} \]
\end{definition}

The functions $\sB_e(\ \! .\  \!,Q)$ and $\sB_e^+(\ \! .\  \!,Q)$
are distributions on $(\Q/\Z)^n$ of weight $\underline{e} - n$.

\begin{proposition}[\cite{GS}, Proposition 2.7] \label{p:evald} Let $e$ be an $n$-tuple of positive integers, $Q \in \sQ$, $v \in (\Q/\Z)^n$,  and $\sigma \in M_n(\Z)$.
Let \begin{equation} L = \{x \in \Z^n: \langle x, \sigma_i \rangle \neq 0 \text{ for } 1 \le i \le n \}. \label{ldef} \end{equation}
We have
\begin{equation} \label{e:dedsum}
  \cD^+(L, \sigma, e, Q, v) = 
\sum_{x\in  \Z^n/\sigma \Z^n }  \sB_e^+(\sigma^{-1}(x+v), \sigma^{-1}Q), \end{equation}
where the right side is understood to have the value 0 when $\det(\sigma) = 0$.
\end{proposition}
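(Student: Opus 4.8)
The plan is to evaluate the $Q$-summation on the left-hand side of~(\ref{e:dedsum}) by reducing it to a sum of one-dimensional Dedekind-type sums and then identifying each with a Bernoulli value (possibly corrected by the $\mathrm{sign}\,Q$ defect). First I would dispose of the degenerate case $\det(\sigma) = 0$: here the columns $\sigma_i$ are linearly dependent, so there is a nonzero integer vector $x_0$ with $\langle x_0, \sigma_i\rangle = 0$ for some $i$, and in fact one can show the $Q$-summation of the corresponding series vanishes (this is where the convention that the right side is $0$ matches up). When $\det(\sigma) \neq 0$, the substitution $y = \sigma^t x$ (or rather grouping $x$ according to its residue class modulo $\sigma\Z^n$ and writing $x = \sigma m + (\text{rep})$) converts the sum over $x \in \Z^n$ with $\langle x, \sigma_j\rangle = y_j$ into $n$ independent sums over the coordinates $y_j \in \Z + (\sigma^{-1}v)_j$, with the $Q$-ordering now governed by the transformed forms $\sigma^{-1}Q$ acting on the $y$-variables. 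The absolute value $|\det(\sigma)|$ and the factorials $e!$ in the normalization of $\cD^+$ in~(\ref{defD}) are precisely what is needed to absorb the Jacobian and the partial-fraction constants.

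The core analytic input is the classical one-variable fact that, for a single linear ordering, $\sum_{n \in \Z} \mathbf{e}(n\alpha) / (n + \beta)^e \mid$ equals (up to the standard power of $2\pi i$ and a factorial) $B_e(\beta)$ when $e \geq 2$ or when $e=1$ and the series is honestly ordered, together with the refinement at $e = 1$: when $\beta \in \Z$, the conditionally convergent sum $\sum \mathbf{e}(n\alpha)/n$ has value depending on the direction of approach, which is exactly encoded by $\frac{\sign Q_{ij}}{2}$ in the definition of $\sB_e(v, Q)$. So after the change of variables, for each fixed $i \in \{1, \dots, m\}$ (recall the $Q$-summation with an $m$-tuple is defined as an average, or rather the limit is taken with $|Q(x)| = \prod_i |Q_i(x)| < t$, and one decomposes the region accordingly), the multi-dimensional sum factors as $\prod_{j=1}^n$ of one-dimensional sums, the factor for $j \in J$ contributing the $\sign$ term and the factor for $j \notin J$ contributing $B_{e_j}$ of the appropriate coordinate of $\sigma^{-1}(x+v)$. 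Summing over the residue classes $x \in \Z^n/\sigma\Z^n$ and symmetrizing $Q \mapsto -Q$ (which is what turns $\sB_e$ into $\sB_e^+$ and kills the terms with $\#J$ odd) gives the claimed identity. Since this proposition is quoted as \cite[Proposition 2.7]{GS}, I expect the proof here to either cite that reference directly or reproduce this argument in compressed form.

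The main obstacle is the careful bookkeeping of the $Q$-limit when $m > 1$ and the linear forms $Q_i$ interact with the change of variables $x \mapsto \sigma^{-1}x$: one must check that the limiting region $\{|Q(x)| < t\}$, after substitution, is of the shape for which Sczech's iterated/one-variable summation results apply, and that no cross terms spoil the factorization. A secondary subtlety is the precise treatment of the $e_j = 1$ coordinates — one has to verify that the relevant one-dimensional series is genuinely only conditionally convergent there and that its $Q$-regularized value is the half-integer multiple of $\sign Q_{ij}$ claimed, matching the convention $B_1(x) = 0$ for $x \in \Z$ built into the definition of the periodic Bernoulli function. Everything else (the Jacobian factors, the partial-fraction identity relating $P(-\partial)$ applied to $\prod \langle x, \sigma_j\rangle^{-1}$ to $\sum_r P_r(\sigma)\prod \langle x, \sigma_j\rangle^{-1-r_j}$, and the distribution property) is routine given the setup already recalled in the excerpt.
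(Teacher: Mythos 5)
The paper gives no proof of this proposition at all: it is stated with the attribution to \cite{GS}, Proposition 2.7, and the reader is referred there, exactly as you anticipated in your second paragraph. Your sketch is a faithful outline of the argument in that reference: factor the lattice sum into one-dimensional Lipschitz--Hurwitz sums $\sum_k \mathbf{e}(k\alpha)(k+\beta)^{-e_j}$, evaluate each as $-(2\pi i)^{e_j}B_{e_j}/e_j!$, and account for the conditionally convergent $e_j=1$, integral-argument coordinates via the $Q$-ordering, which produces the $\tfrac{1}{2}\sign Q_{ij}$ defect and, upon averaging over $Q\mapsto -Q$, the passage from $\sB_e$ to $\sB_e^+$.

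One point of bookkeeping in your sketch is imprecise and worth flagging: the sum over $x\in\Z^n/\sigma\Z^n$ on the right-hand side does not arise from partitioning the summation variable into residue classes modulo $\sigma\Z^n$. Rather, after the substitution $y=x\sigma$ the new variable runs over the sublattice $\Z^n\sigma\subset\Z^n$, and one detects membership in this sublattice by averaging the characters of the finite dual group; it is this finite Fourier expansion that produces both the outer sum over $\Z^n/\sigma\Z^n$ and the shifted arguments $\sigma^{-1}(x+v)$ appearing inside $\sB_e^+$, as well as cancelling the factor $|\det\sigma|$ in the normalization of $\cD^+$. With that correction the proposal is the standard proof.
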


Proposition~\ref{p:evald} gives the value of the Dedekind sum appearing in (\ref{PsiPS}) for
$d = \1$; this will be sufficient for our applications.  Whenever $L$ is given in terms of $\sigma$ as in (\ref{ldef}), we drop it from our notation and write simply $\cD^+(\sigma, e, v,Q)$.

\section{The smoothed cocycle}
\label{s:smoothed}

Let $\l$ be a prime number, and let $\Z_{(\ell)} = \Z[1/p, p \neq \ell]$ denote the localization of $\Z$ at the prime ideal $(\ell)$.
Our aim in this section  is  to smooth the cocycle $\Psi$ at the prime $\ell$, yielding a cocycle $\Psi_\l^+$ defined on the congruence subgroup 
$$ \Gamma_\ell := \Gamma_0(\l \Z_{(\ell)}) =\{A\in \GL_n(\Z_{(\ell)}): 
A \equiv \left(\begin{array}{ccc} *  & *&*  \\ 0  & *&*\\ \vdots  & \vdots & \vdots  \\ 0  &* & *\end{array}\right) \!\! \mod \l\}.$$
We will then prove Theorem~\ref{t:main}, an integrality result for the smoothed cocycle $\Psi_\ell^+$ and a refinement $\Psi_\ell$.

\subsection{Definition of the smoothed cocycle}

Consider the diagonal matrix whose first entry is $\ell$ and other diagonal entries are equal to $1$:
$$\pi_\l =\left(\begin{array}{cccc}\ell & &  &\\
 &1&  & \\
 & & \ddots &\\
&  & &1\end{array} \right).$$ 
 For $\sA=(A_1,\ldots, A_n)\in \Gamma_{\ell}^n$, let \[ \sA'= \pi_\l \sA \pi_\ell^{-1}  = (\pi_\l A_1\pi_\l^{-1}, \ldots ,\pi_\l A_n\pi_\l^{-1})  \in \GL_n(\Z_{(\ell)})^n. \] 
 
  Fix a homogeneous polynomial $P\in \sP$ and define $P' = \pi_\l^{-1}P.$ 
It is a straightforward computation to check that the coefficients of $P$ and $P'$ defined in (\ref{e:prdef}) satisfy
\begin{equation}
 P'_r(\sigma'(d))=P_r(\sigma(d)) \cdot \ell^{-\Sigma(r, d)} \qquad \text{ where } \qquad \Sigma(r, d) = \sum_{i: \, d_i = 1} r_i. \label{e:pequal}
 \end{equation}
 Here as in Section~\ref{s:decomp}, $d = (d_1, \dotsc, d_n)$ is a tuple of integers with $1 \le d_i \le n$, and $\sigma(d)$ denotes the square matrix whose $i$th column is the $d_i$th column of $A_i$.  The matrix $\sigma'(d)$ denotes the similarly constructed matrix with $A_i$ replaced by $A_i'$.
 
Next fix  $ Q\in \sQ$ as in (\ref{defQ}), and let  $Q'=\pi_\l Q$.  For $v \in (\Q/\Z)^n$, let $v' =\pi_\ell v$. We define 
 \begin{equation} \label{e:psielldef}
  \Psi_\ell^+(\sA, P, Q, v) = \Psi(\sA',P', Q', v')-\ell \Psi( \sA, P, Q, v). 
  \end{equation}
The following is a straightforward computation using the fact that $\Psi$ is a $\Gamma$-cocycle. % and that the conjugate of an element of $\Gamma_{\ell}$ by $\pi_\ell$ lies in 
% $\GL_n(\Z)$.
 
% Equation (\ref{Psilevel}) and Th. \ref{theoremSczechcocycle} with $P=1$ can be combined to give immediately  :

\begin{proposition}  \label{propoDPsi}
The function $\Psi_\ell^+$ is a homogeneous $(n-1)$-cocycle on $\Gamma_{\ell}$ valued in $M_\Q$:
\[ \Psi_\ell^+ \in Z^{n-1}(\Gamma_{\ell}, M_\Q). \]
% \footnote{Maybe this has to be restated in a more useful way}
%1) The map 
%\begin{equation*}\begin{array}{cccc}
 %\tilde{ \Psi}_\ell  : & \Gamma_0[\l]^n & \longrightarrow  & M_\mb Q^{P=1} \\
% & \sA &\longmapsto &\left[(Q,v)\mapsto (-1)^n s_{ \sigma(\sA(\mf 1)))}\, \cD_\l (\sigma(\sA(\mf 1)), \1,v, Q)\right]
 %\end{array}
  %\end{equation*}
  %gives rise to a homogenous cocycle $$
 % \tilde{\Psi}_\ell \in 
% \mc Z^{n-1}(\Gamma_0[\l], M_\mb Q^{P=1}).$$
% 2) More generally, the same is true (with a different module for the values) for
%
%\begin{align}
%\Psi_\ell(\mc A)(P, Q,v):=\, &\Psi(\sA',P', Q',\pi_\ell^{-1} \ell v)-\ell \Psi( \sA, P, Q, v)\\
%=&\, (-1)^n s_{ \sigma(\sA(\mf 1)))}\, \sum_r \frac{P_r(\sigma(\1))}{(1+r)!} \cD_\l (\sigma(\sA(\mf 1)), \1+r ,v, Q), 
%\end{align}
%
%$$ ie : \ \  \Psi_\ell \in 
%\mc Z^{n-1}(\Gamma_0[\l], M_\mb Q).$$
\end{proposition}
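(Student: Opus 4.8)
The plan is to verify directly that $\Psi_\ell^+$, as defined in (\ref{e:psielldef}), satisfies the homogeneous cocycle relation on $\Gamma_\ell$ and that it takes values in $M_\Q$. The key observation is that $\Psi_\ell^+$ is built out of two pieces, each of which is (a pullback or twist of) Sczech's cocycle $\Psi$ evaluated on tuples in $\Gamma$, so the cocycle property should follow formally from Theorem~\ref{theoremSczechcocycle}. First I would recall that a homogeneous $(n-1)$-cochain $f$ is a cocycle precisely when $\sum_{i=0}^{n}(-1)^i f(A_0, \dots, \widehat{A_i}, \dots, A_n) = 0$ for all $(A_0, \dots, A_n) \in \Gamma_\ell^{n+1}$, and that the $\Gamma$-equivariance of $f$ (here with respect to the action (\ref{e:vaction}) on $M_\Q$) is what makes this homogeneous condition equivalent to the inhomogeneous one. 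The term $\ell \cdot \Psi(\sA, P, Q, v)$ is simply $\ell$ times the restriction of Sczech's $\Psi$ from $\Gamma$ to $\Gamma_\ell$, hence manifestly a cocycle on $\Gamma_\ell$.

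Next I would treat the term $\Psi(\sA', P', Q', v')$. Observe that the assignment $\sA \mapsto \sA' = \pi_\ell \sA \pi_\ell^{-1}$ is conjugation by the fixed element $\pi_\ell \in \GL_n(\Q) = \Gamma$; since $\Gamma_\ell$ is defined so that $\pi_\ell \Gamma_\ell \pi_\ell^{-1} \subset \Gamma$ (this is exactly the congruence condition on the first column), the tuple $\sA'$ lies in $\Gamma^n$ and the map $\sA \mapsto \sA'$ is a group homomorphism $\Gamma_\ell \to \Gamma$ applied coordinatewise. Likewise $P \mapsto P' = \pi_\ell^{-1}P$, $Q \mapsto Q' = \pi_\ell Q$, and $v \mapsto v' = \pi_\ell v$ are the fixed-element actions of $\pi_\ell^{\pm 1}$ on $\sP$, $\sQ$, and $(\Q/\Z)^n$ respectively. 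So $(\sA, P, Q, v) \mapsto \Psi(\sA', P', Q', v')$ is the pullback of the $\Gamma$-cocycle $\Psi$ along the conjugation homomorphism $\Gamma_\ell \to \Gamma$, twisted compatibly on the coefficient side; pullback of a cocycle along a group homomorphism is a cocycle, so this term is a cocycle on $\Gamma_\ell$ valued in $M_\Q$ (one checks the coefficient twist $(P,Q,v) \mapsto (P',Q',v')$ is precisely the $M_\Q$-module map induced by $\pi_\ell$ compatible with the $\Gamma_\ell$-action, using the distribution relation (\ref{e:dist}) to see it is well-defined). A linear combination of cocycles is a cocycle, so $\Psi_\ell^+ \in Z^{n-1}(\Gamma_\ell, M_\Q)$.

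Concretely, rather than invoking the pullback principle abstractly, one can just write out the $(n+1)$-term alternating sum for $\Psi_\ell^+(A_0, \dots, A_n)$, split it into the two summands, and in the first summand substitute $A_i \mapsto \pi_\ell A_i \pi_\ell^{-1}$ throughout; the cocycle relation for $\Psi$ applied to $(\pi_\ell A_0 \pi_\ell^{-1}, \dots, \pi_\ell A_n \pi_\ell^{-1}) \in \Gamma^{n+1}$ kills the first summand, and the cocycle relation for $\Psi$ restricted to $\Gamma_\ell$ kills the second. The only genuine point requiring care — and the main (mild) obstacle — is bookkeeping: checking that the coefficient-side substitutions $(P, Q, v) \mapsto (P', Q', v')$ interact correctly with the $M_\Q$-action (\ref{e:vaction}) of $\Gamma_\ell$, i.e.\ that conjugation by $\pi_\ell$ on the group is matched by the map $(P,Q,v)\mapsto(P',Q',v')$ on coefficients so the homogeneous relation transfers cleanly. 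This amounts to the identity $\pi_\ell^{-1}((\pi_\ell\gamma\pi_\ell^{-1})P, \dots) = \gamma(\pi_\ell^{-1}P, \dots)$ for the actions in play, which is the straightforward computation the proposition's statement alludes to; the factor $\ell$ in (\ref{e:psielldef}) plays no role in the cocycle property and is there only for the later integrality normalization in Theorem~\ref{t:main}.
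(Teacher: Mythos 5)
Your argument is correct and is exactly the route the paper intends: the paper offers no written proof beyond the remark that the claim is ``a straightforward computation using the fact that $\Psi$ is a $\Gamma$-cocycle,'' and your decomposition into $\ell$ times the restriction of $\Psi$ to $\Gamma_\ell$ plus the pullback of $\Psi$ along conjugation by $\pi_\ell$, matched by the twist $(P,Q,v)\mapsto(P',Q',v')$ on coefficients, is precisely that computation made explicit. You also correctly isolate the only point requiring care, namely the compatibility of this coefficient twist with the module action (\ref{e:vaction}).
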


% [SAY ANYTHING ABOUT NONTRIVIALITY OF THE CLASS?]

\subsection{A Dedekind sum formula}

In this section we prove  that
in the analogue of the Dedekind sum formula (\ref{PsiPS}) for $\Psi_\l$, all terms other than those arising from $d = \mf 1$ cancel.

% Recall the notation of Section~\ref{s:smoothed}.  
% For each tuple $\sA \in \Gamma_{\ell}^n$, we let $\sA' = \pi_\ell \sA \pi_\ell^{-1} \in \GL_n(\Z_{(\ell)})^n$.
In what follows, all objects associated to $\sA'$ instead of $\sA$ will be denoted with a ``prime", such as $X'(d)=X(\sA', d).$ 
% $\sigma'=\sigma(\sA')$. 
%   (Recall that the space $X(\sA, d)$ and the lattice $L(\sA, d)$ are defined in equations (\ref{e:xdef}) and (\ref{e:ldef}), respectively.)
%  The next lemma  compares the subsets $X(d)$ and $X'(d)$.

\begin{lemma}\label{compareXtoX'}
For $d\neq \mf 1,$ the  map \[  \pi_\l   : (x_1,x_2, \ldots, x_n)\mapsto (\ell x_1, x_2,\ldots,x_n) \] induces a bijection between
%  the lattices  $L'(d)$ and $L(d),$ and  a bijection between 
  $X'(d) \cap  \Z^n$ and $X(d) \cap \Z^n.$ \end{lemma}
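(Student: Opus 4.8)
The plan is to unwind the definitions of $X(d)$ and $X'(d)$ and check that the linear-algebra condition defining membership is preserved under scaling the first coordinate by $\ell$. Recall $X(d)\subset\R^n-\{0\}$ consists of those $x$ for which, for each $i$, the first column of $A_i$ not orthogonal to $x$ is the $d_i$th; and $X'(d)$ is the same with $A_i$ replaced by $A_i'=\pi_\ell A_i\pi_\ell^{-1}$. First I would observe that for any column vector $\sigma$ of $A_i$ and its transform $\sigma'=\pi_\ell\sigma\pi_\ell^{-1}$ — more precisely the corresponding column of $A_i'$, which is $\pi_\ell$ applied to the column of $A_i\pi_\ell^{-1}$ — the pairing transforms as $\langle x', \sigma'\rangle = \langle \pi_\ell x, \pi_\ell^{-1}(\text{column})\rangle$, so that $\langle \pi_\ell x, c'\rangle$ equals $\langle x, c\rangle$ up to the relevant factors of $\ell$; the key point is only whether the pairing is zero or nonzero, and a nonzero power of $\ell$ does not change that. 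So I would set up the identity $\langle \pi_\ell x, (A_i')_j\rangle = \ell^{?}\langle x, (A_i)_j\rangle$ carefully, tracking exactly where the powers of $\ell$ land, using that $A_i\in\Gamma_\ell$ so the columns of $A_i$ and $A_i'$ have controlled $\ell$-integrality.

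The second step is to use the congruence condition defining $\Gamma_\ell$. Since $A_i\equiv\left(\begin{smallmatrix}* & *\\ 0 & *\end{smallmatrix}\right)\bmod\ell$ (first column has all lower entries divisible by $\ell$), conjugation by $\pi_\ell$ keeps $A_i'$ integral, and importantly the \emph{first} column of $A_i'$ is congruent mod $\ell$ to that of $A_i$ in the relevant sense, while for the non-first columns the transformation is a permutation-type rescaling. The hypothesis $d\neq\mf 1$ is exactly what is needed: it guarantees that for at least one index $i$ we have $d_i\neq 1$, so that the column $\sigma_i$ actually selected is one of the non-first columns, where the scaling $\pi_\ell$ behaves transparently; and for the indices with $d_i=1$ one must check that the first column being orthogonal/non-orthogonal to $x$ is equivalent to the same for $\pi_\ell x$ and $A_i'$. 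I would verify that $\pi_\ell x\in\Z^n$ iff $x\in\Z^n$ fails in general — rather, $\pi_\ell$ sends $\Z^n$ into $\Z^n$, and I'd need the inverse direction only on the image; so the bijection claim is really: $\pi_\ell$ restricted to $X'(d)\cap\Z^n$ lands in $X(d)\cap\Z^n$, and $\pi_\ell^{-1}$ restricted to $X(d)\cap\Z^n$ lands in $X'(d)\cap\Z^n$. The second direction needs that $x\in X(d)\cap\Z^n$ with $d\neq\mf1$ forces $x_1$ to be divisible by $\ell$ — this is where $d\neq\mf 1$ and the shape of $\Gamma_\ell$ (the zero entries mod $\ell$ in the first column) conspire: non-orthogonality to a non-first column combined with orthogonality to the first column of each $A_i$ up to index $d_i$ should pin down $x_1\equiv 0\bmod\ell$.

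I would then assemble these: define the map $\pi_\ell\colon X'(d)\cap\Z^n\to X(d)\cap\Z^n$, show it is well-defined by the pairing computation of step one, show its set-theoretic inverse $\pi_\ell^{-1}$ is well-defined on $X(d)\cap\Z^n$ using the divisibility observation of step two, and conclude bijectivity since $\pi_\ell$ and $\pi_\ell^{-1}$ are mutually inverse as maps of $\R^n$.

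The main obstacle I anticipate is step two: cleanly proving that $x\in X(d)\cap\Z^n$ with $d\neq\mf1$ forces $\ell\mid x_1$, so that $\pi_\ell^{-1}x$ is again integral. This requires carefully exploiting both the congruence structure of $\Gamma_\ell$ (the vanishing mod $\ell$ of the subdiagonal entries of the first column) and the precise meaning of $d\neq\mf 1$ — namely that selecting the $d_i$th column with $d_i>1$ for some $i$, together with orthogonality of $x$ to earlier columns, yields an $\ell$-divisibility constraint on $\langle x,\cdot\rangle$ that propagates to $x_1$. Everything else is bookkeeping with how $\pi_\ell$ acts on pairings and columns.
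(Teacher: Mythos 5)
Your plan is correct and follows essentially the same route as the paper: the pairing $\langle \pi_\ell x, A_{ij}\rangle$ differs from $\langle x, A'_{ij}\rangle$ only by a power of $\ell$, giving the bijection $X'(d)\leftrightarrow X(d)$, and surjectivity on integral points comes from picking an index with $d_i>1$, so that orthogonality of $x$ to the first column of $A_i$ together with the congruence $A_i\equiv\left(\begin{smallmatrix}* & *\\ 0 & *\end{smallmatrix}\right)\bmod\ell$ forces $a_{11}x_1\equiv 0\pmod\ell$ and hence $\ell\mid x_1$. The only detail you leave implicit is that $a_{11}$ is a unit mod $\ell$ (because $\det A_i\in\Z_{(\ell)}^*$), which is exactly what the paper invokes to finish.
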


\begin{proof}
Denoting the $j$th column of the matrix $A_i$ by $A_{ij}$, note that
for $x\in \mb \R^n$, we have $\langle x, A_{i j}' \rangle = 0 \Leftrightarrow  \langle \pi_\ell x, A_{i j} \rangle = 0$.
In particular, $\pi_\ell$ gives a bijection between   $X'(d)$ and $X(d)$, and hence induces an injection from 
 $X'(d) \cap  \Z^n$ to $X(d) \cap \Z^n.$
 
To show that the map is surjective, we make use of the assumption  $d\neq \mf{1}$ to obtain an index $i$, say $i=1$, for which $d_i>1.$ For any given $z\in X(d) \cap \Z^n,$ the condition $\langle z,A_{11}\rangle=0$  ensures that its coordinates   satisfy  
  $$a_{11} z_1+\ldots +a_{n1}z_n=0.$$ This equation implies  $a_{11}z_1\equiv 0 \pmod \l$  since  $A_1 \in \Gamma_{\ell}.$ Moreover, $a_{11}$ is coprime to $\ell$ since $\det(A_1) \in \Z_{(\ell)}^*.$ We conclude that $\ell$ divides $z_1,$ hence  $z$ is of the form $ \pi_\l x$ for some $x\in \Z^n$
  as desired.
  \end{proof}

Next we use  Lemma \ref{compareXtoX'} to compare $\psi_{\sA'}$ and $\psi_{\sA}$ on the sets
 $X'(d) \cap  \Z^n$ and $X(d) \cap \Z^n$, respectively.
A direct computation shows that for $x\in X'(d),$   one has

 \begin{equation} \label{e:dotprod}
  \langle x, \sigma'_i(d)\rangle= 
 \begin{cases}
  \langle \pi_\l x, \sigma_i(d)\rangle & \text{if } d_i>1, \\
\ell^{-1} \langle \pi_\l x, \sigma_i(d)\rangle &  \text{if } d_i=1.
\end{cases}
\end{equation}
On the other hand, \begin{equation} \label{e:det}
\det(\sigma'(d))= \ell^{1-\#\{i:  \ d_i=1\}}\det(\sigma(d)). \end{equation}
Equations (\ref{e:dotprod}) and (\ref{e:det})  yield   
  \begin{equation} \label{e:detequal}
  \frac{\det(\sigma '(d))}{ \prod_{i=1}^{n} \langle x,\sigma'_i(d)\rangle^{1+r_i}}
 =  \frac{\ell^{1 + \Sigma(r, d)} \det(\sigma(d))}{\prod_{i=1}^n \langle \pi_\ell  x,\sigma_i(d)\rangle^{1 + r_i}}
 \end{equation}
 for $x\in X'(d)$, where $\Sigma(r, d)$ is defined as in (\ref{e:pequal}).
Multiplying (\ref{e:detequal}) by (\ref{e:pequal}) and recalling the definition of $\psi$ given in
 (\ref{e:psipdef}), we obtain
\[  \psi_{\sA'}(P', x)  =  
\l \psi_{\sA}(P, \pi_\l x).
\]
Multiplying by
\[ \textbf{e}(\langle x,\pi_\ell v\rangle)
= \textbf{e}(\langle \pi_\ell x, v\rangle) \]
and taking the $Q'$-summation
 over all $x\in X'(d) \cap \Z^n$,  Lemma \ref{compareXtoX'}  
 implies that in the evaluation of 
 \[ \Psi_\ell^+(\sA, P, Q, v) =  \Psi(\sA', P', Q',\pi_\ell v)- \ell \Psi( \sA,P, Q, v), \]
 the terms in (\ref{PsiPS}) for $d \neq \1$ cancel.  
 
 We have therefore proven 
 the following explicit formula for $\Psi_\ell^+$.
 Let $\sA \in \Gamma_\l^n$.  Scale the matrices in $A$ by an integer relatively prime to $\ell$ so that they all have integer entries; the formulae below will be independent of the chosen scaling integer by the distribution relation for Bernoulli polynomials.
  The matrix $\sigma=\sigma(\mf 1)$ has all rows but the first divisible by $\l$ (and all entries in the first row relatively prime to $\l$).  Therefore, the matrix
$\sigma_\l ={\pi_\l}\ell^{-1} \sigma$
has entries in $\Z_{(\ell)}$.
% Assuming that $\sigma$ satisfies
% $\det(\sigma)\neq 0,$  the following  lattices are all of rank
%$n$ :
%$$\l \sigma_\l 
%\,\mathbb Z^n\subset \sigma_\l \pi_\l\, \mathbb Z^n \subset
%\sigma_\l \,\mathbb Z^n  \subset \mathbb Z^n,$$ so that we can define  
Define the $\l$-smoothed Dedekind sum:
\begin{equation}
\begin{split}
\cD_\l^+(\sigma, e,Q, v) = & \   {} \cD^+(\sigma_\ell, e,  \pi_\ell Q, \pi_\ell v)-\l^{1-n+\underline{e}} \cD^+(\sigma,e,  Q, v). \\
=& {}
 \sum_{x'\in \Z ^n/\sigma_\l  \Z^n }  \sB_e^+(\sigma_\l ^{-1}(x'+  \pi_\ell v), \sigma^{-1}Q ) \\
 & {} \ \ \ \ \ -\l^{1-n+\underline{e}} \!\!\!\!\! \sum_{x\in  \Z ^n/\sigma  \Z^n }  \sB_e^+(\sigma^{-1}( x+ v ),\sigma^{-1}Q). \label{e:dldef}
\end{split}
\end{equation}

 \begin{proposition} \label{p:psiellded} We have
\[
 \Psi_\ell^+(\sA, P, Q, v) = (-1)^n\sgn(\det \sigma)  \sum_r \frac{P_r(\sigma)}{(r+\1)! \ell^{\underline{r}} } \cD_\l^+(\sigma, \mf 1 + r, v, Q).
\]
\end{proposition}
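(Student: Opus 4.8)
The plan is to combine the decomposition formula (\ref{PsiPS}) for $\Psi$ with the cancellation established in the preceding paragraph. Recall that (\ref{PsiPS}) expresses $\Psi(\sA, P, Q, v)$ as $(-1)^n \sum_d \sgn(\det \sigma(d)) \sum_r \frac{P_r(\sigma(d))}{(\1+r)!} \cD^+(X(d) \cap \Z^n, \sigma(d), \1+r, Q, v)$, and the same formula applied to the primed data gives $\Psi(\sA', P', Q', v')$ with every object replaced by its prime. By definition $\Psi_\ell^+(\sA, P, Q, v) = \Psi(\sA', P', Q', v') - \ell \Psi(\sA, P, Q, v)$, so I would substitute both decompositions and argue term-by-term in $d$.

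First I would treat the terms with $d \neq \1$. The text has just shown, using Lemma~\ref{compareXtoX'} together with the identity $\psi_{\sA'}(P', x) = \ell\, \psi_{\sA}(P, \pi_\ell x)$ and the matching of the exponential factors $\textbf{e}(\langle x, \pi_\ell v\rangle) = \textbf{e}(\langle \pi_\ell x, v\rangle)$, that the $Q'$-summation over $X'(d) \cap \Z^n$ of the primed summand equals $\ell$ times the $Q$-summation over $X(d) \cap \Z^n$ of the unprimed summand. Hence for each $d \neq \1$ the contribution to $\Psi(\sA', P', Q', v')$ exactly cancels $\ell$ times the corresponding contribution to $\Psi(\sA, P, Q, v)$. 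This leaves only the $d = \1$ term:
\[
 \Psi_\ell^+(\sA, P, Q, v) = (-1)^n\Big[\sgn(\det \sigma'(\1)) \sum_r \frac{P'_r(\sigma'(\1))}{(\1+r)!} \cD^+(\sigma'(\1), \1+r, Q', v') - \ell\, \sgn(\det \sigma(\1)) \sum_r \frac{P_r(\sigma(\1))}{(\1+r)!} \cD^+(\sigma(\1), \1+r, Q, v)\Big],
\]
where $\cD^+(\sigma, e, Q, v)$ is the sum of Proposition~\ref{p:evald} (the set $L$ being determined by $\sigma$). Here I use that for $d = \1$ the set $X(\1)\cap\Z^n$ agrees, up to the measure-zero hyperplanes excluded in $L$, with the lattice relevant to Proposition~\ref{p:evald}; the $Q$-summation over the excised points contributes nothing.

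Next I would massage the $d = \1$ term into the stated shape. Write $\sigma = \sigma(\1)$; since $\sA \in \Gamma_\ell^n$, all rows of $\sigma$ but the first are divisible by $\ell$, so $\sigma_\ell = \pi_\ell \ell^{-1} \sigma \in M_n(\Z_{(\ell)})$, and one checks $\sigma'(\1) = \pi_\ell \sigma \pi_\ell^{-1}$; moreover $\sigma_\ell$ and $\sigma'(\1)$ differ only by right multiplication by the unit scalar... more precisely a direct computation with (\ref{e:dotprod})–(\ref{e:det}) for $d = \1$ gives $\det(\sigma'(\1)) = \ell^{1-n}\det(\sigma)$, so $\sgn(\det\sigma'(\1)) = \sgn(\det\sigma)$, and the change of variables $x' \mapsto \pi_\ell x$ identifies $\cD^+(\sigma'(\1), e, Q', v')$ with $\cD^+(\sigma_\ell, e, \pi_\ell Q, \pi_\ell v)$ up to a power of $\ell$ bookkept in (\ref{e:pequal}): namely $P'_r(\sigma'(\1)) = P_r(\sigma)\ell^{-\Sigma(r,\1)} = P_r(\sigma)\ell^{-\underline{r}}$ since $\Sigma(r,\1) = \sum_i r_i = \underline{r}$. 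Collecting the powers of $\ell$, the bracket becomes $\sgn(\det\sigma)\sum_r \frac{P_r(\sigma)}{(\1+r)!\,\ell^{\underline{r}}}\big[\cD^+(\sigma_\ell, \1+r, \pi_\ell Q, \pi_\ell v) - \ell^{1-n+\underline{e}}\cD^+(\sigma, \1+r, Q, v)\big]$ with $\underline{e} = n + \underline{r}$, and the bracketed difference is exactly $\cD_\ell^+(\sigma, \1+r, Q, v)$ by definition (\ref{e:dldef}), using Proposition~\ref{p:evald} to rewrite each $\cD^+$ as a sum of $\sB_e^+$ values. This yields the claimed formula.

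I expect the main obstacle to be the careful tracking of all the powers of $\ell$ and the identification of the primed Dedekind sum $\cD^+(\sigma'(\1), e, Q', v')$ with $\cD^+(\sigma_\ell, e, \pi_\ell Q, \pi_\ell v)$: one must verify that the change of variables $x = \pi_\ell x'$ interacts correctly with the quotient lattices $\Z^n/\sigma'(\1)\Z^n$ versus $\Z^n/\sigma_\ell\Z^n$ and that the $Q$-limit summation is compatible with this substitution (so that one may legitimately pass through Proposition~\ref{p:evald}), along with checking that $\sigma_\ell^{-1}\pi_\ell = \sigma^{-1}$ up to scalar so that the argument $\sigma^{-1}Q$ appearing in (\ref{e:dldef}) is correct. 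All the remaining identities are the routine computations already recorded in (\ref{e:pequal}), (\ref{e:dotprod}), (\ref{e:det}) and (\ref{e:detequal}).
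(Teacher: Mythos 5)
Your proposal is correct and follows essentially the same route as the paper: decompose both $\Psi(\sA',P',Q',v')$ and $\ell\Psi(\sA,P,Q,v)$ via (\ref{PsiPS}), cancel the $d\neq\1$ terms using Lemma~\ref{compareXtoX'} and the identity $\psi_{\sA'}(P',x)=\ell\,\psi_{\sA}(P,\pi_\ell x)$, and match the surviving $d=\1$ term with the definition (\ref{e:dldef}) of $\cD_\ell^+$. The only small imprecision is your identification of $\sigma'(\1)$: it equals $\pi_\ell\ell^{-1}\sigma=\sigma_\ell$ exactly (not $\pi_\ell\sigma\pi_\ell^{-1}$), so no change of variables is needed for the $d=\1$ term --- the primed Dedekind sum is literally $\cD^+(\sigma_\ell,e,\pi_\ell Q,\pi_\ell v)$, and the remaining bookkeeping of powers of $\ell$ via (\ref{e:pequal}) is as you describe.
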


\subsection{A refined cocycle} \label{s:refined}

Define $\cD_\ell$ as in (\ref{e:dldef}) with $\sB^+$ replaced with $\sB$.
Define $\Psi_\ell$ as in Proposition~\ref{p:psiellded}, with $\cD_\l^+$ replaced by $\cD_\ell$:
\begin{equation} \label{e:psiellref}
 \Psi_\ell(\sA, P, Q, v) := (-1)^n\sgn(\det \sigma)  \sum_r \frac{P_r(\sigma)}{(r+\1)! \ell^{\underline{r}} } \cD_\l(\sigma, \mf 1 + r,  Q, v).
\end{equation}
Then $\Psi_\ell^+$ is recovered from $\Psi_\ell$ by projection onto the $+1$ eigenspace of the action of multiplication by $-1$ on $\sQ$:
\[  \Psi_\ell^+(\sA, P, Q, v) = \frac{1}{2}( \Psi_\ell(\sA, P, Q, v) +  \Psi_\ell(\sA, P, -Q, v)). \]

In \cite{CDG}, we show that the result of Proposition~\ref{propoDPsi} holds still for $\Psi_\ell$:

\begin{proposition}  \label{propoDPsi2}
The function $\Psi_\ell$ is a homogeneous $(n-1)$-cocycle on $\Gamma_{\ell}$ valued in $M_\Q$:
\[ \Psi_\ell \in Z^{n-1}(\Gamma_{\ell}, M_\Q). \]
\end{proposition}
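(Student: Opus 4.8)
The plan is to deduce the cocycle property of $\Psi_\ell$ from that of $\Psi_\ell^+$ (Proposition~\ref{propoDPsi}) together with the fact that $\Psi_\ell$ is literally built out of the same Dedekind-sum data, only with $\sB^+$ replaced by $\sB$. The conceptual point is that the passage from $\sB_e$ to $\sB_e^+$ is the averaging operator $\phi(Q)\mapsto\frac12(\phi(Q)+\phi(-Q))$ associated to the order-two action of $-1$ on the linear-form variable $\sQ$, and this operator commutes with the group cohomology differential $\partial$ on $C^{\bullet}(\Gamma_\ell, M_\Q)$ because the $\Gamma_\ell$-action on $M_\Q$ commutes with the $\{\pm1\}$-action on $\sQ$ (inspection of~(\ref{e:vaction}): $\gamma$ acts on the $Q$-slot by $Q\mapsto A^{-1}Q$, which commutes with $Q\mapsto -Q$). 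Thus the map $\iota\colon\Psi_\ell\mapsto\Psi_\ell^+$ is $\partial$-equivariant, but since $\iota$ is only a projection this alone does not give $\partial\Psi_\ell=0$; I therefore split $\Psi_\ell$ into its $\pm$ eigencomponents and argue each is a cocycle.

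The first step is to record that $\Psi_\ell$ decomposes as $\Psi_\ell = \Psi_\ell^+ + \Psi_\ell^-$, where $\Psi_\ell^-(\sA,P,Q,v) = \frac12(\Psi_\ell(\sA,P,Q,v) - \Psi_\ell(\sA,P,-Q,v))$ lies in the $-1$ eigenspace for the $\sQ$-involution, defined by the same formula~(\ref{e:psiellref}) but with $\sB_e$ replaced by $\sB_e^- := \frac12(\sB_e(\cdot,Q)-\sB_e(\cdot,-Q))$. Since $\partial$ preserves each eigenspace (by the commutation above) and $\partial\Psi_\ell^+=0$ by Proposition~\ref{propoDPsi}, it suffices to prove $\partial\Psi_\ell^- = 0$. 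The second step is to run, for $\Psi_\ell^-$, exactly the computation that proves Proposition~\ref{propoDPsi}. That computation has two ingredients: (i) $\Psi$ itself is a $\Gamma$-cocycle (Theorem~\ref{theoremSczechcocycle}), so $\Psi_\ell^+ := \Psi(\sA',P',Q',v') - \ell\Psi(\sA,P,Q,v)$, viewed as a difference of restrictions-and-twists of a cocycle, is a $\Gamma_\ell$-cocycle; and (ii) the Dedekind-sum identity of Section~\ref{s:decomp}–\ref{s:bern} reexpresses this in the closed form of Proposition~\ref{p:psiellded}. The only place $\sB^+$ rather than $\sB$ entered was Proposition~\ref{p:evald} (via~(\ref{e:dedsum})), whose statement is about $\cD^+$ and whose $\sB^+$ is itself the $+$-projection of an analogous formula with $\sB$. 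The analogous unprojected evaluation
\[
 \cD(L,\sigma,e,Q,v) = \sum_{x\in\Z^n/\sigma\Z^n}\sB_e(\sigma^{-1}(x+v),\sigma^{-1}Q)
\]
holds by the same proof as in \cite{GS}, and this is what links $\Psi_\ell$ to a genuine difference of $\Gamma$-equivariant expressions.

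The cleanest route, and the one I would ultimately take, is to avoid eigenspace bookkeeping by proving directly that there is an ``unprojected smoothed cocycle''
\[
 \widetilde\Psi_\ell(\sA,P,Q,v) := \Psi(\sA',P',Q',v') - \ell\,\Psi(\sA,P,Q,v)
\]
computed without symmetrizing the $Q$-limit — i.e. using Sczech's cocycle with the one-sided $Q$-summation convention — so that $\widetilde\Psi_\ell$ is a $\Gamma_\ell$-cocycle for the formal reason that $\Psi$ is a $\Gamma$-cocycle and $\sA\mapsto\sA'=\pi_\ell\sA\pi_\ell^{-1}$, $P\mapsto P'=\pi_\ell^{-1}P$, etc., are compatible with the $\Gamma$-module structure; then identify $\widetilde\Psi_\ell$ with $\Psi_\ell$ of~(\ref{e:psiellref}) via the (unsymmetrized) Dedekind-sum evaluation and the cancellation Lemma~\ref{compareXtoX'}. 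The main obstacle is precisely this identification: one must check that the $Q$-summation trick still converges and that Lemma~\ref{compareXtoX'} and the algebraic identities~(\ref{e:pequal})–(\ref{e:detequal}) go through verbatim without the $+$-symmetrization — since these manipulations are term-by-term in $d$ and do not touch the $\sB$-vs-$\sB^+$ distinction, this is routine, and indeed this is exactly the verification carried out in \cite{CDG}, to which the proposition defers.
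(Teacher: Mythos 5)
There is a genuine gap, and it sits exactly where the paper itself flags one. Your strategy reduces everything to showing that the odd component $\Psi_\ell^-$ (equivalently, the part of $\Psi_\ell$ built from $\sB_e-\sB_e^+$) is a cocycle, and for that you invoke an ``unprojected'' evaluation $\cD(L,\sigma,e,Q,v)=\sum_x \sB_e(\sigma^{-1}(x+v),\sigma^{-1}Q)$, claiming it ``holds by the same proof as in \cite{GS}'' for a version of Sczech's $\Psi$ with a one-sided $Q$-summation. But no such object exists in the paper, and its construction is not routine: the $Q$-summation (\ref{e:qsumlimit}) is a symmetric limit over $|Q(x)|<t$, which pairs $x$ with $-x$, and this symmetrization is precisely what forces the even projection $\sB_e^+$ in Proposition~\ref{p:evald}. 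The defect terms $\prod_{j\in J}\frac{\sign Q_{ij}}{2}$ are an artifact of that conditionally convergent limit, and there is no Eisenstein-type series in the paper whose Dedekind-sum evaluation produces the odd part. Consequently $\Psi_\ell^-$ is not exhibited as a difference of restrictions and twists of any established cocycle, so the formal argument (``$\widetilde\Psi_\ell$ is a cocycle because $\Psi$ is'') has nothing to apply to. This is exactly why the remark following the proposition calls the alternating property of $\Psi_\ell$ ``mysterious'' from the explicit definition (\ref{e:psiellref}): a direct proof along the lines of Proposition~\ref{propoDPsi} does not go through.

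Your account of what \cite{CDG} does is also inaccurate. The deferred proof is not a verification that a one-sided $Q$-summation converges; it proceeds by a different route entirely: one shows that for $P=1$ the function $\Psi_\ell$ coincides with a cocycle constructed by Shintani's method, whose cocycle property is established independently, and the case of general $P$ is then deduced from Theorem~\ref{t:psiint}, which expresses $\Psi_\ell(\sA,P,Q,v)$ as $\int_{\X_v}P\,d\mu_\ell$ with the measure $\mu_\ell$ determined by the $P=1$ values alone. The parts of your write-up that are sound --- the $\Gamma_\ell$-equivariance of $\Psi_\ell$, and the observation that the $\{\pm1\}$-action on $\sQ$ commutes with the $\Gamma_\ell$-action so that the even/odd decomposition is compatible with the differential --- only recover what the paper already describes as easy; the cocycle identity for the odd part remains unproved in your proposal.
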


\begin{remark}  The $\Gamma_\ell$-invariance of  $\Psi_\ell$ (i.e.\ that $\Psi_\ell$ is a homogeneous cochain) is easy to 
check directly from the definition.  However, the alternating property of $\Psi_\ell$ (i.e.\ that $\Psi_\ell$ is a cocycle) is mysterious
using our explicit definition. In $\cite{CDG}$, we show that for $P=1$, the function $\Psi_\ell$ is equal to a cocycle defined using 
Shintani's method; the case for general $P$ then follows from Theorem~\ref{t:mint} below.
\end{remark}

\subsection{Decomposition of the Dedekind Sum}
\label{sectiongeneralsigma}

Consider the following $\Z[\frac{1}{\ell}][\Gamma]$-submodule of $M_\Q$:
\begin{equation} \label{e:mldef}
\begin{split} M_\ell = & \ \{ \phi \in M_\Q:  \phi(P, Q, v) \in {\textstyle \frac{1}{m}}\Z[\textstyle{\frac{1}{\ell}}] \text{ when }    P \in \Z[\frac{1}{\ell}][X_1, \dots, X_n] \\ & 
 \ \text{ is homogeneous and } P(v + (\textstyle{\frac{1}{\ell}}\Z \oplus \Z^{n-1})) \subset \Z[\frac{1}{\ell}]\}.
\end{split}
\end{equation}
In (\ref{e:mldef}), the constant $m$ denotes the number of linear forms defining the element $Q \in \sQ$, as in Section~\ref{s:cocycledef}.
In the remainder of this section, we prove Theorem~\ref{t:main}, which states that the smoothed cocycle $\Psi_\ell$ takes values in $M_\ell$.

\begin{remark} \label{r:cdg}
The cocycle $\Psi_\ell$ was introduced in Section~\ref{s:refined} because it takes values in $M_\ell$.  The smoothed Sczech
$\Psi_\ell^+$ takes values in $\frac{1}{2} M_\ell$.  Furthermore, we prove in \cite{CDG} that the cocycle $\Psi_\ell$ may be paired with
an appropriate cycle involving $Q \in \sQ$ with $m=1$ to yield the special values of zeta functions, thereby leading to a proof of Theorem~\ref{t:integral}
(see Section~\ref{s:integral}).  Sczech's formulas relating $\Psi$ to zeta values involves a $Q$ with $m = n = [F:\Q]$.  This leads to a 
slightly weaker version of Theorem~\ref{t:integral}, where the zeta values are shown to lie in $\frac{1}{2n} \Z[1/\ell]$.
\end{remark}

In view of the definition (\ref{e:psiellref}), our first step in proving the integrality of $\Psi_\ell$ is to
decompose $\cD_\l(\sigma, e, v, Q)$ into a sum of terms that individually share an analogous integrality property.
To this end,  fix a linear map $\sL\in \Hom(\Z^n,\F_\ell),$ and  assume that its values on the standard basis of $\Z^n$ are all
nonzero. For a tuple $e=(e_1, \ldots, e_n)$ of positive integers, $x \in \Q^n$, and $z\in \F_\ell$ define
\begin{equation}
\label{defB_eell}\sB_e^{\sL,z}(x,Q) =
\sB_e(x, Q)-\ell^{1-n+\underline{e}} \!\!\! \sum_{\stack{y\in \F_{\!\ell}^{\, n}}{\sL(y)=z}} \sB_e\left(\frac{x+y} \l, Q\right),\end{equation} where
the summation  runs  over all $y \in \F_{\!\l} ^{\, n}$  such that  $\sL(y)=z$.
Note that $\sB_e^{\sL,z}$ depends on $x$ mod $\ell\Z^n$ rather than mod $\Z^n$, since the summation over $y$ is restricted.
It satisfies the following distribution relation for integers $N$ relatively prime to $\ell$:
\begin{equation} \label{e:restricteddist}
  \sB_e^{\sL, Nz}(x, Q) = N^{\underline{e}-n} \!\!\!\sum_{k \in (\ell\Z/\ell N\Z)^n} \sB_e^{\sL, z}\left(\frac{x + k}{N}, Q \right).
\end{equation}

As in the previous section, consider $\sA \in \Gamma_{\ell}^n$ and $\sigma = \sigma(\mf 1)$ the square matrix consisting of the 
first columns of the matrices in the tuple $\sA$.  Recall $\sigma_\ell = \pi_\ell \ell^{-1} \sigma$.
%  \begin{lemma}\label{lemmachoicereps}
% There is a set of representatives  $\{x_i\}\in\mathbb Z^n$ of  the  classes
% $\mathbb Z^n/\sigma_\l \mathbb Z ^n$ such that
%  their first component $x_{i, 1}$ of each representative is divisible by $\l.$
 % \end{lemma}
% Let us grant Lemma~\ref{lemmachoicereps} for the moment.  
For $y \in \Z^n/\ell\Z^n$, define $\sL(y) = \langle R, y \rangle \pmod{\ell}$,
where $R$ denotes the first row of $\sigma$.  Our desired decomposition is :

\begin{lemma}  \label{l:decompose}
Let $\{x = (x_1, \dots, x_n)\} \subset \Z^n$ denote a set of representatives for $\Z^n/\sigma_\l \Z^n$.  We have
\begin{equation} \label{e:decompose}
 \cD_\ell(\sigma, e, v, Q) = \sum_{x} \sB_e^{\sL,-x_1}(\sigma_\l ^{-1}(x+ \pi_\ell v), \sigma^{-1}Q).
 \end{equation}
\end{lemma}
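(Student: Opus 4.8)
The plan is to unwind the definitions of both sides of~\eqref{e:decompose} and match them term by term. Starting from the definition of $\cD_\ell$ in~\eqref{e:dldef} (with $\sB^+$ replaced by $\sB$, as in Section~\ref{s:refined}), we have
\[
 \cD_\ell(\sigma, e, v, Q) = \sum_{x'\in \Z^n/\sigma_\ell \Z^n} \sB_e(\sigma_\ell^{-1}(x' + \pi_\ell v), \sigma^{-1}Q) - \ell^{1-n+\underline{e}} \sum_{z \in \Z^n/\sigma\Z^n} \sB_e(\sigma^{-1}(z+v), \sigma^{-1}Q).
\]
The first sum already matches the ``$\sB_e$'' term on the right-hand side of~\eqref{e:decompose} once we sum the formula~\eqref{defB_eell} for $\sB_e^{\sL, -x_1}$ over the chosen representatives $x$ for $\Z^n/\sigma_\ell\Z^n$. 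So the entire content is to show that the subtracted term in~\eqref{defB_eell}, summed over $x$, reproduces the second sum above:
\[
 \ell^{1-n+\underline{e}} \sum_{x} \sum_{\stack{y \in \F_\ell^n}{\sL(y) = -x_1}} \sB_e\!\left(\frac{\sigma_\ell^{-1}(x+\pi_\ell v) + y}{\ell}, \sigma^{-1}Q\right) \;=\; \ell^{1-n+\underline{e}} \sum_{z \in \Z^n/\sigma\Z^n} \sB_e(\sigma^{-1}(z+v), \sigma^{-1}Q).
\]

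The key computation is a change of variables relating the index set $\{(x, y) : x \in \Z^n/\sigma_\ell\Z^n,\ y \in \F_\ell^n,\ \sL(y) = -x_1\}$ to $\Z^n/\sigma\Z^n$, under which the arguments of $\sB_e$ agree. First I would observe that $\sigma = \ell \pi_\ell^{-1}\sigma_\ell$, so that $\sigma^{-1} = \ell^{-1}\sigma_\ell^{-1}\pi_\ell$, and hence for $z \in \Z^n$,
\[
 \sigma^{-1}(z + v) = \tfrac{1}{\ell}\sigma_\ell^{-1}\pi_\ell(z+v) = \tfrac{1}{\ell}\bigl(\sigma_\ell^{-1}(\pi_\ell z + \pi_\ell v)\bigr).
\]
Now $\pi_\ell z = (\ell z_1, z_2, \dots, z_n)$, and writing $z_j = x_j + \ell(\cdots)$-type decompositions, the point is that as $z$ ranges over $\Z^n/\sigma\Z^n$ and we set $y := \pi_\ell z - \pi_\ell z \bmod \ell$... more precisely, I want to write $\pi_\ell z = x + \sigma_\ell w$ for some $x$ among the fixed representatives and some lattice vector, together with a ``remainder'' $y \in \F_\ell^n$ coming from the first coordinate ($\ell z_1$ contributes a multiple of $\ell$, the others contribute $z_j$). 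The constraint $\sL(y) = -x_1$ — i.e.\ $\langle R, y\rangle \equiv -x_1 \pmod \ell$ where $R$ is the first row of $\sigma$ — is exactly the condition that $x + y \cdot(\text{something})$ lies in the image of $\pi_\ell$ applied to $\Z^n$, which is the consistency condition coming from the fact that the first row of $\sigma$ is the one congruent to a unit vector mod $\ell$ in the sense used in Lemma~\ref{compareXtoX'}. This mirrors exactly the surjectivity argument in the proof of Lemma~\ref{compareXtoX'}: divisibility by $\ell$ of the first coordinate of a lattice point in the relevant kernel is governed by $\langle R, \cdot\rangle \bmod \ell$.

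Concretely, I expect the bijection to send $z \in \Z^n/\sigma\Z^n$ to the pair $(x, y)$ where $x \equiv \pi_\ell z \pmod{\sigma_\ell \Z^n}$ is the chosen representative, and $y \in \F_\ell^n$ records $\sigma_\ell^{-1}(\pi_\ell z - x) \bmod \ell$ scaled appropriately so that $\sigma_\ell^{-1}(x + \pi_\ell v) + y \equiv \tfrac{1}{\ell}\cdot(\text{stuff}) \cdot \ell$ matches $\ell \sigma^{-1}(z+v)$; checking that this is well-defined and bijective, and that the constraint $\sL(y) = -x_1$ holds, is the heart of the matter. Once the bijection is in place, the equality of the $\sB_e$-arguments is immediate from the displayed identity $\sigma^{-1}(z+v) = \tfrac{1}{\ell}\sigma_\ell^{-1}(\pi_\ell z + \pi_\ell v)$, and the two powers of $\ell$ in front already agree, so the lemma follows. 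The main obstacle is bookkeeping: correctly identifying the lattice $\sigma_\ell\Z^n$ inside $\pi_\ell\Z^n$ and verifying that the fiber of $\Z^n/\sigma\Z^n \to \Z^n/\sigma_\ell\Z^n$ over a given representative $x$ is parametrized precisely by $\{y \in \F_\ell^n : \sL(y) = -x_1\}$ — this requires keeping careful track of which coordinate the $\pi_\ell$-scaling acts on and invoking $A_i \in \Gamma_\ell$ (equivalently, that $\sigma$ has all rows but the first divisible by $\ell$ with the first row a unit vector mod $\ell$) exactly as in Lemma~\ref{compareXtoX'}. I would also double-check the edge case $\det\sigma = 0$, where both sides vanish by convention.
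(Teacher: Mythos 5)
Your proposal is correct and takes essentially the same route as the paper: the paper simply packages your change of variables as the single closed-form bijection $(x,y)\mapsto z=\pi_\ell^{-1}x+\ell^{-1}\sigma y$ from $\Z^n/\sigma_\ell\Z^n\times\Z^n/\ell\Z^n$ onto $\pi_\ell^{-1}\Z^n/\sigma\Z^n$, under which $z\in\Z^n$ if and only if $\sL(y)\equiv -x_1\pmod{\ell}$. This is precisely the inverse of the map $z\mapsto(x,y)$ you sketch (note $\ell^{-1}\sigma y=\pi_\ell^{-1}\sigma_\ell y$, so $\pi_\ell z=x+\sigma_\ell y$), and it renders the matching of $\sB_e$-arguments, $\sigma^{-1}(z+v)=\ell^{-1}\bigl(\sigma_\ell^{-1}(x+\pi_\ell v)+y\bigr)$, immediate.
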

  
 \begin{remark}
One easily checks that the summand in (\ref{e:decompose}) is independent of the choice of representative $x \in \Z^n$ for each class 
in $\Z^n/\sigma_\l \Z^n$.
 \end{remark}
  
  Before proving the lemma, we state an elementary lemma that will be used to change variables of indices in double sums several times
  in this article.
  
  \begin{lemma} \label{l:bijlem}
Let $A, B, C, D$ be finite index subgroups of an abelian group $E$ such that $A \supset B$ and $C \supset D$.
Let $\{x_i\}_{i=1}^{m}$ and $\{y_j\}_{j=1}^{n}$ be sets of representatives for $A/B$ and $C/D$, respectively.  Suppose that
$\#(E/D) = mn$ and that $A \cap C \subset B$.  Then the map \[ \psi: A/B \times C/D \rightarrow E/D\ \] given by
$\psi( \overline{x}_i, \overline{y}_j) = \overline{x_i + y_j}$ is a bijection of sets.
  \end{lemma}
  
  \begin{remark}  Note that the map $\psi$ depends on the  set of representatives $\{x_i\}$ chosen  and is hence only a bijection of sets and not a homomorphism of groups.
  \end{remark}
  
  \begin{remark}
Note that the conditions in the third sentence of  Lemma~\ref{l:bijlem} are automatically satisfied if $A = E$ and $B=C$.
  \end{remark}
  
  \begin{proof}[Proof of Lemma~\ref{l:decompose}.]  Applying Lemma~\ref{l:bijlem} with  $A = E = \pi_\ell^{-1} \Z^n$, $B = C =  \ell^{-1} \sigma \Z^n$, and 
  $D = \sigma \Z^n$, we see that the map $(x, y) \mapsto z = \pi_\ell^{-1} x + \l^{-1}\sigma y$ induces  a bijection between $\Z^n/\sigma_\l \Z ^n \times \Z^n/\ell\Z^n$ and $\pi_\ell^{-1} \Z^n/\sigma \Z^n$.  (We stress again that this map is only a bijection of sets dependent on the representatives $x$ chosen for $\Z^n/\sigma_\l \Z ^n$; it is not a group homomorphism.)
Under this bijection we have \[ \sL(y) \equiv  -x_1 \!\!\!\!\pmod{\l}  \Longleftrightarrow z \in \Z^n. \]  The result follows immediately from the definitions (\ref{e:dldef}) and (\ref{defB_eell}).
  \end{proof}
  
  From (\ref{e:psiellref}) and Lemma~\ref{l:decompose}, we obtain:
\begin{proposition} \label{p:keypsiform} We have
\[  \Psi_\ell(\sA, P, Q, v) = \pm \sum_{r} \frac{P_r(\sigma)}{(r+ \1)! \ell^{\underline{r}}} \!\sum_{x \in \Z^n/\sigma_\ell \Z^n} \sB_{1+r}^{\sL, -x_1}(\sigma_\ell^{-1}(x + \pi_\ell v), \sigma^{-1}Q), \]
where the $\pm$ sign is given by $(-1)^n \sgn(\det \sigma)$.
\end{proposition}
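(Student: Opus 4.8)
The plan is to deduce Proposition~\ref{p:keypsiform} by feeding the decomposition of Lemma~\ref{l:decompose} directly into the explicit expression~(\ref{e:psiellref}) for $\Psi_\ell$; at this point all the real work has been done in the two preceding statements, so the argument is essentially a substitution together with a check that the hypotheses of Lemma~\ref{l:decompose} apply to the data at hand.

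First I would recall from~(\ref{e:psiellref}) that $\Psi_\ell(\sA, P, Q, v)$ is $(-1)^n\sgn(\det\sigma)$ times $\sum_r \frac{P_r(\sigma)}{(r+\1)!\,\ell^{\underline{r}}}\,\cD_\ell(\sigma, \mf 1+r, Q, v)$, the sum running over the partitions $r=(r_1,\dotsc,r_n)$ of $\deg P$ into nonnegative integers, where $\sigma = \sigma(\mf 1)$ is the matrix formed from the leading columns of the tuple $\sA$. Next I would verify the input needed for Lemma~\ref{l:decompose}: since $\sA\in\Gamma_\ell^n$, the matrix $\sigma$ has all entries of its first row $R$ prime to $\ell$ (as observed at the start of this subsection), so the map $\sL(y)=\langle R, y\rangle\bmod\ell$ indeed takes nonzero values on every standard basis vector, which is the condition under which $\sB_e^{\sL,z}$ in~(\ref{defB_eell}) is defined; and $e = \mf 1 + r$ is a tuple of positive integers, as the lemma requires. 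Applying Lemma~\ref{l:decompose} with $e = \mf 1 + r$ then rewrites each Dedekind sum as $\sum_{x\in\Z^n/\sigma_\ell\Z^n}\sB_{\mf 1+r}^{\sL,-x_1}(\sigma_\ell^{-1}(x+\pi_\ell v),\sigma^{-1}Q)$, the summand being independent of the choice of coset representatives $x$ by the remark following that lemma; substituting and absorbing the sign $(-1)^n\sgn(\det\sigma)$ into the symbol ``$\pm$'' gives the asserted identity.

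I do not anticipate a genuine obstacle: the substantive content is already packaged in Lemma~\ref{l:decompose}, whose proof rests on the bijection $\Z^n/\sigma_\ell\Z^n\times\Z^n/\ell\Z^n\cong\pi_\ell^{-1}\Z^n/\sigma\Z^n$ and the fact that the condition $z\in\Z^n$ for the image point translates exactly into the congruence $\sL(y)\equiv -x_1\pmod\ell$. The one place demanding care is purely notational, namely keeping the order of the arguments $Q$ and $v$ consistent between~(\ref{e:dldef}), (\ref{e:psiellref}) and Lemma~\ref{l:decompose}. Once established, this formula is the starting point for the proof of Theorem~\ref{t:main}, where the individual summands $\sB_{\mf 1+r}^{\sL,-x_1}$ will be estimated $p$-adically; note that the denominator $m$ that appears in the definition of $M_\ell$ plays no role at the present stage.
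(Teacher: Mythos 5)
Your proposal is correct and follows exactly the paper's route: the paper derives Proposition~\ref{p:keypsiform} precisely by substituting the decomposition of Lemma~\ref{l:decompose} (with $e=\mf 1+r$) into the defining formula~(\ref{e:psiellref}) for $\Psi_\ell$. Your additional check that $\sL$ is nonvanishing on the standard basis vectors (because the first row of $\sigma$ has entries prime to $\ell$ for $\sA\in\Gamma_\ell^n$) is a correct and worthwhile verification that the paper leaves implicit.
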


In the next two subsections, we show that the individual terms in (\ref{e:decompose}) are integral when $e = \1$, thereby proving the integrality property for $\Psi_\ell$ when $P=1$.  The integrality of $\Psi_\ell$  in general will follow by bootstrapping from this base case.
 % We conclude this subsection with:
  
% \begin{proof}[Proof of Lemma~\ref{lemmachoicereps}] 
% We start with an arbitrary  set
 % $x_1,\ldots,x_m\in\mathbb Z^n$ of  representatives mod $\sigma_\l \mathbb Z ^n$. By construction,
  %the first line $R_1$  of $\sigma_\l$ consists of coefficients coprime to $\l.$
 %In particular, the gcd $g$ of these coefficients  is coprime to $\l.$
% A corresponding Bezout relation can be written  as $g=\langle R_1, u\rangle$ for some $^t u\in \mb Z^n.$ Choosing any arbitray inverse
% $g^{-1}\in \mb Z$ of  $g \mod \l,$
 % the vector $x_i-\sigma_\l (x^{(1)}_i g^{-1}u)$ is equivalent to $x$ and its first component is divisible by $\l.$
 % \end{proof}

\subsection{A cyclotomic Dedekind sum}

The following ``cyclotomic Dedekind sum" attached to a real number $x$ will play an important role in our computations:

% \footnote{Observe that this expression is very close to the constant term of Shimura's Eisenstein series
% $E_1(\chi),$ where $\chi$ is non trivial of conductor $\ell$ here. And we are throwing some level $\ell$ in  the conductor $p^N\ell$ of our Eisentein cocycle ($(u,v)$ gave denominator $p^N.$)}

  \begin{equation}
  B_1^{\exp}(x,r)=\sum_{m=1}^\ell  \textbf{e}\!\left(\frac{rm}\ell\right)B_1\!\left(\frac{x+m}\ell\right)
  \end{equation}
  for any $x \in \R$ and $r \in \F_\ell^\times.$

  \begin{lemma}\label{lemmaChi} The value of  the cyclotomic Dedekind sum is given by
  \begin{equation}B_1^{\exp}(x,r )=\frac{ \mathbf{ e }(\frac {-r[x]} \l )}{\mathbf{e}(\frac r \l)-1} +\frac{\delta_{x}}{2} \mathbf{e}\left(-\frac{rx}{\ell}\right),\end{equation}
where $\delta_{x} = 1$ if $x \in \Z$ and is 0 otherwise.
  \end{lemma}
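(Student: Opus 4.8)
The plan is to compute the sum $B_1^{\exp}(x,r) = \sum_{m=1}^{\ell} \textbf{e}(rm/\ell) B_1((x+m)/\ell)$ directly, by first reducing to the case $x \in [0,\ell)$ using the periodicity built into $B_1$, and then using the classical closed form for the Fourier-type sum $\sum_m \zeta^m \{t_m\}$. First I would observe that both sides are unchanged under $x \mapsto x + \ell$: on the left, replacing $x$ by $x+\ell$ permutes the summation index $m$ cyclically (absorbing the shift into $m$, using $\textbf{e}(r) = 1$ since $r \in \Z$), and $B_1$ has period $1$ in its argument after dividing by $\ell$; on the right, $\textbf{e}(-r[x+\ell]/\ell) = \textbf{e}(-r[x]/\ell)\textbf{e}(-r) = \textbf{e}(-r[x]/\ell)$ and similarly $\textbf{e}(-r(x+\ell)/\ell) = \textbf{e}(-rx/\ell)$, and $\delta_{x+\ell} = \delta_x$. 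So I may assume $0 \le x < \ell$, hence $[x] = 0$ when $x \notin \Z$ and $[x]=0, x=0$ in the integral case within this range; the claimed formula then simplifies to $1/(\textbf{e}(r/\ell)-1) + (\delta_x/2)\textbf{e}(-rx/\ell)$.

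Next I would split into the two cases. If $x \notin \Z$ with $0 < x < \ell$, then as $m$ runs over $1, \dots, \ell$, the value $\{(x+m)/\ell\}$ equals $(x+m)/\ell$ for $m = 1, \dots, \ell - \lceil x \rceil$ and equals $(x+m-\ell)/\ell$ for the remaining $m$; writing $B_1(t) = t - 1/2$ for non-integer $t$ (and noting $(x+m)/\ell \notin \Z$ throughout since $x \notin \Z$), the sum becomes $\sum_{m=1}^{\ell} \textbf{e}(rm/\ell)\big((x+m)/\ell - 1/2\big) - \sum_{m > \ell - \lceil x\rceil} \textbf{e}(rm/\ell)$. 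Here $\sum_{m=1}^\ell \textbf{e}(rm/\ell) = 0$ kills the constant terms, $\sum_m \textbf{e}(rm/\ell)\cdot m/\ell$ is evaluated by the standard identity $\sum_{m=1}^\ell m\zeta^m = \ell\zeta^\ell/(\zeta - 1) = \ell/(\zeta-1)$ for $\zeta = \textbf{e}(r/\ell) \ne 1$, giving $1/(\zeta - 1)$, and the tail sum $\sum_{m > \ell - \lceil x \rceil} \zeta^m$ is a geometric sum that I expect to cancel exactly against the $(x/\ell)\sum_m \zeta^m = 0$ contribution combined with the reindexing — this bookkeeping is where I should be careful, but it is routine. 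If $x \in \Z$ (so $x = 0$ in our normalized range, but I would keep $x$ general to track the $\textbf{e}(-rx/\ell)$ factor before normalizing), then exactly one term $m = \ell - x$ (mod $\ell$) has $(x+m)/\ell \in \Z$, contributing $B_1 = 0$ rather than $-1/2$; so $B_1^{\exp}(x,r)$ equals the "non-integer formula" value plus the correction $\textbf{e}(r(\ell-x)/\ell)\cdot(1/2) = \textbf{e}(-rx/\ell)/2$, which is exactly the extra term $\delta_x \textbf{e}(-rx/\ell)/2$.

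The main obstacle — really the only non-mechanical point — is getting the tail/boundary term in the non-integer case to collapse correctly, i.e.\ verifying that $-\sum_{m > \ell - \lceil x\rceil}\textbf{e}(rm/\ell)$ together with the reindexing contributions produces precisely $\textbf{e}(-r[x]/\ell)/(\textbf{e}(r/\ell)-1)$ once one restores general $x$ (not just $x \in [0,\ell)$). A clean way to sidestep the case analysis entirely would be to note that $m \mapsto B_1((x+m)/\ell)$ and the full sum are both periodic of period $\ell$ in $m$, write $B_1((x+m)/\ell) = \frac{1}{\ell}(x + m - \ell[\,(x+m)/\ell\,]) - \frac{1}{2} + \frac{1}{2}\delta_{(x+m)/\ell \in \Z}$ uniformly, and recognize $\sum_m \textbf{e}(rm/\ell)\cdot(-[\,(x+m)/\ell\,])$ as a sum that telescopes after substituting $m' = x + m$; I would present whichever of these is shortest. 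Either way the proof is a one-paragraph computation once the normalization $x \in [0,\ell)$ is in place, and I expect no genuine difficulty beyond careful tracking of floor functions and the single exceptional index.
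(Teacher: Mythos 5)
Your overall strategy---expand $B_1$ as fractional part minus $\tfrac12$, kill the constant terms by character orthogonality, evaluate the weighted geometric sum, and treat the single integral index separately to produce the $\tfrac{\delta_x}{2}\textbf{e}(-rx/\ell)$ correction---is exactly the computation the paper performs, and when carried out correctly it does close up. But two concrete steps as you have written them are wrong, and together they would derail the verification. First, reducing to $0\le x<\ell$ does \emph{not} give $[x]=0$: for $x=2.5$ and $\ell=5$ one has $[x]=2$. You have conflated reduction modulo $\ell$ with reduction modulo $1$, so the factor $\textbf{e}(-r[x]/\ell)$ in the target does not disappear after your normalization and the ``simplified'' identity you set out to prove is not the correct one. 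Second, the closed form $\sum_{m=1}^{\ell} m\zeta^m=\ell/(\zeta-1)$ is off by $\ell$: since $\zeta^\ell=1$, the correct value for the range $1\le m\le\ell$ is $\ell\zeta/(\zeta-1)=\ell/(\zeta-1)+\ell$ (your formula is valid for the range $0\le m\le\ell-1$). If both slips are fixed, the bookkeeping you flag as the delicate point does work out: the main sum contributes $\zeta/(\zeta-1)$, the wrap-around tail $-\sum_{m>\ell-\lceil x\rceil}\zeta^m=\sum_{m=1}^{\ell-\lceil x\rceil}\zeta^m$ contributes $(\zeta^{1-\lceil x\rceil}-\zeta)/(\zeta-1)$, and the total is $\zeta^{-[x]}/(\zeta-1)$ as required, using $\lceil x\rceil=[x]+1$ for $x\notin\Z$. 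So the gap is repairable, but as stated the computation would not verify the lemma.

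The paper sidesteps the wrap-around analysis entirely by normalizing differently: instead of reducing $x$ modulo $\ell$, it shifts the summation index, $m'=m+[x]$, which runs over the same residues mod $\ell$. This replaces $x$ by its fractional part $\{x\}\in[0,1)$ at the cost of an explicit prefactor $\textbf{e}(-r[x]/\ell)$, after which $B_1\bigl((m'+\{x\})/\ell\bigr)=(m'+\{x\})/\ell-\tfrac12$ uniformly for $0\le m'\le\ell-1$ (except for the single term $m'=\{x\}=0$, which yields the $\delta_x$ correction), and only the one sum $\sum_{m'=0}^{\ell-1}\tfrac{m'}{\ell}\zeta^{m'}=1/(\zeta-1)$ remains. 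I recommend adopting that index shift in place of your case split on $m\lessgtr\ell-\lceil x\rceil$; it is precisely the ``clean way'' you were reaching for at the end of your proposal.
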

  \begin{proof}
  Translating  $m$ by the integer $[x]$ gives $m'=m+[x],$ which runs  over the same set of classes mod $\ell.$ This  yields
  $$B_1^{\exp}(x,r )=\sum_{m'\!\!\!\!\!\mod \!\l} \textbf{e}\left(\frac{r(m'-[x])} \l\right)B_1\left(\frac {m'+\{x\}} \ell\right), $$ where 
   $0\leq \{x\}< 1$ is the fractional part of $x.$ Choosing representatives $m'$ between $0$ and $\l-1$ gives
  \[ B_1\left(\frac {m'+\{x\}} {\ell}\right)=\frac {m'+\{x\}} {\ell} -\frac{1}{2} \]
  unless $m' = \{x\} = 0$, in which case the $-1/2$ term does not occur.
  Therefore
  \begin{equation}\label{B1exp}B_1^{\exp}(x,r)=\textbf{e}\left(\frac{-r[x]}\l\right) \sum_{m'=0}^{\l-1} \frac {m'}\l \textbf{e}\left(\frac{rm'}\l\right) + \frac{\delta_{x}}{2} \mathbf{e}\left(\frac{-rx}{\ell}\right) ,\end{equation}
  since the terms coming from $\{x\}$ and $\frac 12$ each vanish in the sum over $m'.$
  The value of the sum in (\ref{B1exp}) can be  obtained by evaluating  at $z=r$ the derivative  of

  $$\frac 1 {2 \pi i }\sum_{m'= 0}^{\l-1}\textbf{e}\left(\frac {m'z}\l\right)=\frac 1 {2\pi i} \cdot \frac{\textbf{e}(z)-1}{\textbf{e}(z/\l) -1}.$$
  The lemma follows.
\end{proof}

\subsection{The case $e = \1$}
\label{subsectionsimple}

The goal of this section is to bound the denominators of the individual terms in (\ref{e:decompose}) when $e = \1$. 

 \begin{proposition}\label{p:simplecase} Let  $x\in  \Q^n$. The quantity $\sB_\1^{\sL,z}(x,Q)$ lies in $\frac{1}{m}\Z[1/\ell]$, and lies in $\frac{1}{m}\Z$ if $\ell > n+1$. 
\end{proposition}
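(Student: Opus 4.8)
The starting point is the definition
\[
\sB_\1^{\sL,z}(x,Q) = \sB_\1(x,Q) - \ell^{1-n+\underline{\1}}\!\!\sum_{\substack{y\in\F_\ell^n\\ \sL(y)=z}}\sB_\1\!\left(\frac{x+y}{\ell},Q\right)
= \sB_\1(x,Q) - \ell\!\!\sum_{\substack{y\in\F_\ell^n\\ \sL(y)=z}}\sB_\1\!\left(\frac{x+y}{\ell},Q\right),
\]
since $\underline{\1}=n$. Writing $\sL(y)=\sum_j c_j y_j$ with all $c_j\in\F_\ell^\times$ (this is the hypothesis on $\sL$), I would first parametrize the constraint $\sL(y)=z$: fix $y_2,\dots,y_n\in\F_\ell$ freely, and solve for $y_1 = c_1^{-1}(z - \sum_{j\ge 2} c_j y_j)$. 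Then the inner sum factors as a product over $j=2,\dots,n$ of free sums $\sum_{y_j\in\F_\ell} B_1\!\big(\tfrac{x_j+y_j}{\ell}\big)$, times a single constrained $B_1$ in the first coordinate whose argument involves $y_1$ depending on all the other $y_j$'s — and crucially, after recognizing the definition of $\sB_\1(x,Q)=\frac1m\sum_i\prod_j(\cdots)$, the $Q$-dependent sign corrections $\tfrac{\operatorname{sign}Q_{ij}}{2}$ that appear for indices $j$ with $x_j\in\Z$ need to be tracked through. The right move is to split on whether the first coordinate of $x$ is integral or not and to use the free-sum distribution relation $\sum_{y_j\bmod\ell}B_1\!\big(\tfrac{x_j+y_j}{\ell}\big)=B_1(x_j)$ (valid since $B_1$ is a distribution of weight $0$) to collapse all the $j\ge 2$ factors, so that after simplification $\sB_\1^{\sL,z}(x,Q)$ reduces, up to the overall $\tfrac1m\sum_i$ and the factors $\prod_{j\ge 2}(\cdots)$ which are already in $\tfrac1{2}\Z\subset\Z[1/\ell]$, to an expression built from the \emph{cyclotomic Dedekind sum} $B_1^{\exp}$ of the previous subsection.

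Concretely, after the substitution $y_1=c_1^{-1}(z-\sum_{j\ge2}c_jy_j)$ and collapsing the free coordinates, I expect the first-coordinate contribution to take the shape $\ell\sum_{y_1\bmod\ell} B_1\!\big(\tfrac{x_1+y_1}{\ell}\big)$ twisted by a character in $y_1$ — i.e. essentially $\ell\cdot\ell^{-1}$ times something of the form $\sum_m \textbf{e}(rm/\ell) B_1\!\big(\tfrac{x_1'+m}{\ell}\big) = B_1^{\exp}(x_1',r)$ for a suitable $r\in\F_\ell^\times$ (the character arising because the free sums over $y_2,\dots,y_n$ each produce, besides the collapsed $B_1(x_j)$ term, a Gauss-type contribution only when the twist is nontrivial — here one must be careful: the twisted free sum $\sum_{y_j}\textbf{e}(\ast y_j)B_1(\tfrac{x_j+y_j}{\ell})$ is itself a cyclotomic Dedekind sum $B_1^{\exp}$, not a simple distribution collapse). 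So more honestly: each of the $n$ coordinate sums becomes a $B_1^{\exp}(x_j,r_j)$ for some $r_j\in\F_\ell^\times$ determined by $\sL$ and $z$, and Lemma~\ref{lemmaChi} gives
\[
B_1^{\exp}(x_j,r_j) = \frac{\textbf{e}(-r_j[x_j]/\ell)}{\textbf{e}(r_j/\ell)-1} + \frac{\delta_{x_j}}{2}\textbf{e}(-r_jx_j/\ell).
\]
The key arithmetic input is that $\textbf{e}(r_j/\ell)-1 = \zeta_\ell^{r_j}-1$ is, up to a unit in $\Z[\zeta_\ell]$, equal to $1-\zeta_\ell$, and $N_{\Q(\zeta_\ell)/\Q}(1-\zeta_\ell)=\ell$; hence $(\zeta_\ell^{r_j}-1)^{-1}\in\frac1\ell\Z[\zeta_\ell][1/\ell]=\Z[\zeta_\ell][1/\ell]$, in fact its denominator is supported only at $\ell$. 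Therefore $B_1^{\exp}(x_j,r_j)\in\frac12\Z[\zeta_\ell][1/\ell]$, and the product over the coordinates, averaged over $i=1,\dots,m$ with the sign factors, lands in $\frac1m\Z[\zeta_\ell][1/\ell]$; but $\sB_\1^{\sL,z}(x,Q)$ is manifestly rational (it is a $\Q$-linear combination of values of $B_1$), so the $\zeta_\ell$-terms must recombine rationally and we land in $\frac1m\Z[1/\ell]$, giving the first assertion.

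For the sharper statement when $\ell>n+1$: here one needs that the \emph{only} source of an $\ell$ in the denominator is the product of at most $n$ factors $(\zeta_\ell^{r_j}-1)^{-1}$ together with possible $\tfrac12$'s, but $\ell\nmid 2$ (as $\ell>n+1\ge 3$), so the $\tfrac12$'s are $\ell$-units; meanwhile the overall prefactor is a single $\ell$ (from $\ell^{1-n+\underline{\1}}=\ell$) so naively one gets denominator $\ell^{n-1}$ after multiplying $\ell$ by $\ell^{-n}$ — and one must argue the cancellation is total. The cleanest route is probably to bound the $\ell$-adic valuation directly: expand $\sB_\1^{\sL,z}(x,Q)$ as $\frac1m\sum_i\prod_j(\cdots)$, clear the denominator $\ell$ coming from the single prefactor against the norm computation, and observe that a rational number of the form (trace of an element of $\Z[\zeta_\ell]$)$/\ell$ that is integral away from $\ell$ has $\ell$-adic valuation $\ge -1$; then a parity/degree count using $\deg\Q(\zeta_\ell)=\ell-1 > n$ forces the potential pole to be absent because a nonzero multiple-of-$\tfrac1\ell$ rational contribution would require $\ell-1$ conjugates to fail to cancel, impossible with only $n$ coordinate factors when $\ell-1>n$. \textbf{I expect this last step — pinning down that the denominator is exactly trivial (not merely a bounded power of $\ell$) when $\ell>n+1$ — to be the main obstacle}, as it requires genuinely exploiting the structure of $\Z[\zeta_\ell]$ (the principality of the prime above $\ell$ and the degree inequality) rather than a formal manipulation; the rest is bookkeeping with the distribution relations and Lemma~\ref{lemmaChi}.
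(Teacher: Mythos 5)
Your overall strategy (reduce to cyclotomic Dedekind sums and exploit the arithmetic of $\Z[\zeta_\ell]$) is the paper's, but three steps do not hold up. First, the structural reduction: solving the constraint $\sL(y)=z$ for $y_1$ does not make the sum factor (as you notice mid-stream), and the repair is \emph{not} that ``each coordinate sum becomes a single $B_1^{\exp}(x_j,r_j)$'' --- a single product of $B_1^{\exp}$'s is irrational, so that cannot be what $\sB_\1^{\sL,z}(x,Q)$ equals. The move you never write down is to detect the condition by $\frac{1}{\ell}\sum_{k=0}^{\ell-1}\mathbf{e}\bigl(k(\sL(y)-z)/\ell\bigr)$: the $k=0$ term cancels the unsmoothed $\sB_\1(x,Q)$ via the distribution relation, and each $k\neq 0$ term factors as $\prod_j B_1^{\exp}(x_j,ka_j)$, so that (when no $x_j$ is integral) $\sB_\1^{\sL,z}(x,Q)=-\Tr_{\Q(\zeta_\ell)/\Q}\Bigl(\mathbf{e}\bigl(\tfrac{-z-\sL([x])}{\ell}\bigr)\big/\prod_{j}(\mathbf{e}(\tfrac{a_j}{\ell})-1)\Bigr)$. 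Your fallback ``it lies in $\Z[\zeta_\ell][1/\ell]$ and is rational, hence in $\Z[1/\ell]$'' does rescue the first assertion once the expression is correctly organized as a sum over $k$, but as written the intermediate claim is false. Second, the powers of $2$: when some $x_j\in\Z$, the definition of $\sB_\1(\cdot,Q)$ introduces factors $\tfrac{\sign Q_{ij}}{2}$, and your parenthetical ``$\tfrac12\Z\subset\Z[1/\ell]$'' is false for odd $\ell$; a naive bound only gives $\tfrac{1}{m2^{\#J_0}}\Z[1/\ell]$. The paper removes $2^{\#J_0}$ by summing over subsets $J\subset J_0$ and using the identity $\prod_{j\in J_0}\bigl(\tfrac{\mathbf{e}(a_j/\ell)+1}{\mathbf{e}(a_j/\ell)-1}+\sign Q_{ij}\bigr)=2^{\#J_0}\prod_{j\in J_0}\tfrac{\alpha_{ij}}{\mathbf{e}(a_j/\ell)-1}$ with $\alpha_{ij}\in\{1,\mathbf{e}(a_j/\ell)\}$; some such cancellation is needed and is absent from your sketch.

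Third, the step you yourself flag as the main obstacle, namely integrality for $\ell>n+1$, is genuinely missing. Your ``naive denominator $\ell^{n-1}$'' overcounts, and the proposed ``parity/degree count'' (``$\ell-1$ conjugates failing to cancel'') is not an argument. The point is that $\ell$ is totally ramified in $\Q(\zeta_\ell)$ with $v_\ell(\zeta_\ell-1)=\tfrac{1}{\ell-1}$, so the argument of the trace has $\ell$-adic valuation at least $-\tfrac{n}{\ell-1}$; since the unique prime above $\ell$ is Galois-stable, every conjugate has the same valuation, hence the trace is a \emph{rational} number of valuation $>-1$, which forces valuation $\ge 0$. No ``total cancellation'' needs to be argued: the denominator bound $\ell^{n/(\ell-1)}<\ell$ already does the whole job, and this is exactly where the hypothesis $\ell>n+1$ enters. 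You had the raw ingredient ($\N_{\Q(\zeta_\ell)/\Q}(1-\zeta_\ell)=\ell$) but used it only to conclude $(\zeta_\ell-1)^{-1}\in\tfrac1\ell\Z[\zeta_\ell]$, i.e.\ valuation $\ge -1$ per factor, which is too weak by a factor of $\ell-1$ in the exponent.
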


\begin{proof} This proof follows the argument of \cite[\S 6.1]{dasgupta}.
We begin by   relaxing the restricted  summation. Since the map 
$$y\mapsto \frac 1 \l \sum_{k=0}^{\l-1} \textbf{e}\left(\frac {k\sL(y)} \l\right)$$ is 
 the characteristic function of the kernel
of $\sL$, we obtain 
\begin{equation}
\label{equatB}\sB_e^{\sL,z}(x,Q)=- \ell^{\underline{e} - n}\sum_{k=1}^{\l-1} \sum_{y\in \mathbf F_{\!\l}^{\,n}}\textbf{e}\left(\frac{k(\sL(y)-z)}\l\right) \sB_e\left(\frac{x+y} \l,Q\right). \end{equation}
Note that the term $k=0$ cancels the leading term of $\sB_e^{\sL,z}$
using the distribution relation for $\sB_e$.

Let  $a_j$ be the
value of the linear form $\sL$ on the $j$-th term of the standard
basis of $\Z^n.$ We specialize to $e = \1$ and consider the case where no component of
$x$ is an integer (so there is no dependence on $Q$).
The sum (\ref{equatB}) then decomposes as
$$-\sum_{k=1}^{\l-1} \mathbf{e}\left(-\frac{kz} \l\right)\sum_{y_1=1}^\l \ldots \sum_{y_n=1}^{\l}  \prod_{j=1}^n \textbf{e}\left(\frac{ka_jy_j}\l\right)B_1\left(\frac{x_j+y_j} \l \right).$$
Since each $a_j$ is non-zero by assumption, we
can 
repeatedly  use Lemma  \ref{lemmaChi} to obtain
\begin{align}
\sB_{\1}^{\sL,z}(x,Q)
=&\,
-\sum_{k=1}^{\ell-1}\mathbf{e}(-\frac{kz}\l)\left( \prod_{j=1}^n \frac {\mathbf{e}(-\frac{ka_j[x_j]} \l  )}{\textbf{e}(\frac {ka_j} \l )-1}\right) \nonumber \\
=&\, - \Tr_{\Q(\zeta_\l )/\Q}\left(\frac {  \mathbf{e}(\frac
{-z-\sL([x])} \l )  } {\prod_{j=1}^n(\textbf{e}(\frac {a_j} \l )-1)}\right). \label{B1integral}
\end{align} 
For any primitive $\l$-th root
of unity $\zeta_\ell$, the element $\zeta_\ell - 1$ is supported at $\ell$ and has $\ell$-adic valuation $1/(\ell - 1)$.
Therefore, the  expression (\ref{B1integral}) lies in $\Z[\frac 1 \l]$ and 
has denominator at most $\l^\frac n{\l-1}.$ The lemma is proven in the case where no component $x_j$ is integral.

For the general case, let $J_0 = \{j: x_j \in \Z\}$.  An argument as above yields the following value for $\sB_\1^{\sL,z}(x,Q)$: 
\begin{equation}\label{B1integralQtrue} 
-\frac {1}{m2^{\#J_0}} \sum_{i=1}^m \sum_{J \subset J_0} 
\Tr_{\Q(\zeta_\l )/\Q}\left(\frac {  \textbf{e}(\frac
{-z-\sL([x])} \l )}{\prod_{j \not \in J_0} (\textbf{e}(\frac{a_j}{\ell}) - 1)} \prod_{j \in J_0 - J} \frac{\textbf{e}(\frac{a_j}{\ell}) + 1  } {\textbf{e}(\frac {a_j} \l )-1}\prod_{j\in J} \sign Q_{ij}\right).
\end{equation}
As in (\ref{B1integral}), the sum in (\ref{B1integralQtrue}) lies in $\Z[1/\ell]$ and in fact lies in $\Z$ if $\ell > n+1$.  It only remains to eliminate the factor $1/2^{\#J_0}$.  When summed over all $J \subset J_0$, the argument of the trace in (\ref{B1integralQtrue}) is equal to a constant depending on $J_0$ (but
not $J$) times
\begin{align}
 \sum_{J \subset J_0} \left(\prod_{j \in J_0 - J}   \frac{\textbf{e}(\frac{a_j}{\ell}) + 1  } {\textbf{e}(\frac {a_j} \l )-1}\prod_{j\in J} \sign Q_{ij} \right) &= \prod_{j \in J_0} \left(   \frac{\textbf{e}(\frac{a_j}{\ell}) + 1  } {\textbf{e}(\frac {a_j} \l )-1} + \sign Q_{ij}\right) \label{e:insidetr} \\
 &= 2^{\# J_0} \prod_{j \in J_0} \left(  \frac{ \alpha_{ij} } {\textbf{e}(\frac {a_j} \l )-1}\right), \label{e:got2j}
\end{align}
where $\alpha_{ij} = \textbf{e}(a_j/\ell)$ or $\alpha_{ij} = 1$ in the cases $\sign Q_{ij} = 1$ or $\sign Q_{ij} = -1$, respectively.
The factor $2^{\#J_0}$ in (\ref{e:got2j}) cancels that in (\ref{B1integralQtrue}).
\end{proof}

\subsection{Proof of Theorem~\ref{t:main}} \label{s:main}

We are now ready to complete the proof of Theorem~\ref{t:main} from the introduction, which states that
 the rational number $\Psi_\ell(\sA, P, Q, v)$ lies in $\frac{1}{m} \Z[1/\ell]$ when (\ref{e:pcond}) is satisfied.  We do this by showing that it lies in $\frac{1}{m}\Z_p$ for each prime $p\neq \ell$.

\begin{proposition}\label{p:crucial}
Let  $x\in \Q^n$ and  let $p\neq \l $  be a prime number.  Let $r$ be an $n$-tuple of nonnegative integers.
There exists an integer $\epsilon$ depending only on $r$, $\ell$ and the denominator of $x$, such that for all positive integers $M$ we have the following congruence between rational numbers:

\begin{equation}\label{crucialcongruence}
p^{M \cdot {\underline r}} \N(r + \1)^{-1}\sB_{\1+r}^{\sL,z} \left(\frac x {p^M},Q\right)\equiv\sB_\1^{\sL,z} \left(\frac x {p^M},Q\right)
 \N x^r \mod p^{M-\epsilon} \Z_p.
\end{equation}
\begin{remark}\label{remx=r=0} Here $\N x^r$ is shorthand for $\prod_{j=1}^{n} x_j^{r_j}$.
If it happens that both $r_j$ and $x_j$ are zero,  the corresponding term $x_j^{r_j}$ is understood to equal  $1.$  
% Note that these terms correspond to the need for the $Q$-limit process. 
\end{remark}
\end{proposition}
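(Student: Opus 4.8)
The plan is to reduce the congruence to a one-variable statement about the cyclotomic Dedekind sums $B_k^{\exp}$ of Section~\ref{subsectionsimple}, prove that statement by a short generating-function computation, and then reassemble. As a first step, exactly as in the proof of Proposition~\ref{p:simplecase}, relax the restricted summation using (\ref{equatB}) and use multiplicativity of the Bernoulli distribution to write, in the case where no coordinate of $x/p^M$ is integral (the general case being handled as in (\ref{B1integralQtrue})),
\[
 \sB_e^{\sL,z}\!\left(\tfrac{x}{p^M},Q\right) = -\ell^{\,\underline e-n}\sum_{k=1}^{\ell-1}\textbf{e}\!\left(\tfrac{-kz}{\ell}\right)\prod_{j=1}^n B_{e_j}^{\exp}\!\left(\tfrac{x_j}{p^M},\,ka_j\right),
\]
where $a_j\in\F_\ell^\times$ is the value of $\sL$ on the $j$-th standard basis vector. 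Applying this with $e=\1+r$ and with $e=\1$ reduces Proposition~\ref{p:crucial} to a coordinate-by-coordinate comparison of the two products.

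The one-variable input is an evaluation of $B_k^{\exp}$ generalizing Lemma~\ref{lemmaChi}. Define $\gamma_m(u,\zeta)$ by $e^{us}/(\zeta e^s-1)=\sum_{m\ge 0}\gamma_m(u,\zeta)\,s^m/m!$ (a formal power series in $s$; note $\gamma_0(u,\zeta)=(\zeta-1)^{-1}$). Translating $m\mapsto m+[w]$ and applying $\sum_{m'=0}^{\ell-1}(\zeta e^{t/\ell})^{m'}=(e^t-1)/(\zeta e^{t/\ell}-1)$ with $\zeta=\textbf{e}(b/\ell)$ (so $\zeta^\ell=1$) gives, for $k\ge 1$ and $b\in\F_\ell^\times$,
\[
 B_k^{\exp}(w,b)=\textbf{e}\!\left(\tfrac{-b[w]}{\ell}\right)\cdot\frac{k}{\ell^{k-1}}\,\gamma_{k-1}\big(\{w\},\textbf{e}(b/\ell)\big).
\]
Writing $1/(\zeta e^s-1)=\sum_j c_j(\zeta)\,s^j/j!$ one checks $c_j(\zeta)\in\Z[\zeta_\ell,\tfrac1\ell]\subset\Z_p[\zeta_\ell]$ (using $(\zeta-1)^{-1}\in\tfrac1\ell\Z[\zeta_\ell]$), so that $\gamma_m(u,\zeta)=\sum_{i=0}^m\binom{m}{i}u^i c_{m-i}(\zeta)$. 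Substituting $u=\{x_j/p^M\}$ and setting $y_j:=x_j-p^M[x_j/p^M]$, whose denominator divides that of $x_j$, the terms with $i<m$ each pick up an explicit factor $p^{M(m-i)}$ while the $i=m$ term is $y_j^m/(\zeta-1)=x_j^m/(\zeta-1)+p^M(\cdots)$; hence
\[
 p^{Mm}\,\gamma_m\big(\{x_j/p^M\},\zeta\big)\equiv\frac{x_j^m}{\zeta-1}\pmod{p^{M-\epsilon_j}\Z_p[\zeta_\ell]}
\]
for an $\epsilon_j$ depending only on $m$, $\ell$, and $v_p$ of the denominator of $x_j$.

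For the assembly, feed the displayed evaluation of $B_k^{\exp}$ (with $k=1+r_j$) into the reduction: the prefactors $(1+r_j)/\ell^{r_j}$ collect to $\N(r+\1)/\ell^{\underline r}$, cancelling the outer $\ell^{\,\underline{(\1+r)}-n}=\ell^{\underline r}$; the roots of unity collect to $\textbf{e}(-k\sL([x/p^M])/\ell)$; and the coordinates with $r_j=0$ (where $B_1^{\exp}$ occurs identically in both weights, including the $\sign Q_{ij}$-corrections of (\ref{B1integralQtrue}), and $x_j^{r_j}=1$) give matching factors. This yields
\[
 p^{M\underline r}\,\N(r+\1)^{-1}\,\sB_{\1+r}^{\sL,z}\!\left(\tfrac{x}{p^M},Q\right)=-\sum_{k=1}^{\ell-1}\textbf{e}\!\left(\tfrac{-k(z+\sL([x/p^M]))}{\ell}\right)\prod_{j=1}^n p^{Mr_j}\gamma_{r_j}\big(\{x_j/p^M\},\zeta_{k,j}\big),
\]
with $\zeta_{k,j}=\textbf{e}(ka_j/\ell)$, whereas the same expression with each $p^{Mr_j}\gamma_{r_j}$ replaced by $x_j^{r_j}/(\zeta_{k,j}-1)$ equals $\sB_\1^{\sL,z}(x/p^M,Q)\,\N x^r$. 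Substituting the one-variable congruence and expanding the product over $j$, the leading term is precisely $\sB_\1^{\sL,z}(x/p^M,Q)\,\N x^r$, and every other term carries a factor $p^M$ times quantities whose $p$-adic valuations are bounded below by a constant depending only on $r$, $\ell$, and $x$; since the sum over $k$ is $\Tr_{\Q(\zeta_\ell)/\Q}$ of the $k=1$ summand and $p^{M-\epsilon}\Z_p[\zeta_\ell]\cap\Q=p^{M-\epsilon}\Z_p$, the congruence follows for a suitable $\epsilon$. The remaining cases, in which some coordinate of $x/p^M$ is integral, are treated as in Proposition~\ref{p:simplecase} when $r_j=0$, and for $r_j>0$ by noting that either $x_j=0$ — whence $\N x^r=0$ by Remark~\ref{remx=r=0} and $p^{Mr_j}\gamma_{r_j}(0,\zeta_{k,j})=p^{Mr_j}c_{r_j}(\zeta_{k,j})\in p^M\Z_p[\zeta_\ell]$ annihilates the left side — or $x_j\ne 0$, which occurs for only finitely many $M$ and is handled by a crude valuation bound after enlarging $\epsilon$.

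I expect the main obstacle to be the generating-function evaluation of $B_k^{\exp}$: it is exactly that identity which forces the factors $\ell^{-r_j}$ and $1+r_j$ hard-wired into the definition of $\Psi_\ell$ in (\ref{e:psiellref}) to cancel against $\N(r+\1)^{-1}$ and $p^{M\underline r}$ in the statement, so that everything reduces to the clean $p$-adic approximation $\gamma_m(\{x_j/p^M\},\zeta)\sim x_j^m/(p^{Mm}(\zeta-1))$. The remaining work is bookkeeping, though tracking the sets $J$, $J_0$ and the $\sign Q_{ij}$ factors uniformly across the weights $\1$ and $\1+r$ requires some care.
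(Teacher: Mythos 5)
Your argument is correct and follows the paper's overall strategy — relax the restricted sum via (\ref{equatB}), compare coordinate by coordinate the weight-$(1+r_j)$ and weight-$1$ cyclotomic Bernoulli sums, and conclude by taking the trace from $\Q(\zeta_\ell)$ — but your key lemma is genuinely different. The paper never evaluates $B_k^{\exp}$ for $k>1$: it truncates $b_{1+r_j}$ to its top two terms using (\ref{e:bernbegins}) and lets the vanishing of $\sum_{y_j}\mathbf{e}(ka_jy_j/\ell)$ kill the leading term, arriving directly at the congruence (\ref{congruenceBej}). You instead prove the exact identity $B_k^{\exp}(w,b)=\mathbf{e}(-b[w]/\ell)\,k\,\ell^{1-k}\gamma_{k-1}(\{w\},\mathbf{e}(b/\ell))$ for all $k\ge 1$, a genuine generalization of Lemma~\ref{lemmaChi}, which makes the cancellation of the factors $\ell^{-r_j}$ and $(1+r_j)$ against $\N(r+\1)^{-1}$ an exact algebraic identity and isolates all the $p$-adic analysis in the single approximation $p^{Mm}\gamma_m(\{x_j/p^M\},\zeta)\equiv x_j^m/(\zeta-1)$. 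This buys a cleaner, exact intermediate formula at the cost of bookkeeping with the coefficients $c_j(\zeta)$; the paper's route is shorter because it discards everything beyond the second Taylor term immediately. One small caveat: in the case $r_j>0$, $x_j\neq 0$, $x_j/p^M\in\Z$, do not lean on the fact that this ``occurs for only finitely many $M$'' — enlarging $\epsilon$ over those $M$ would make it depend on $v_p(x_j)$, which the denominator of $x$ does not control. The correct and sufficient observation, which you also make, is that both sides then lie separately in $p^{M-\epsilon}\Z_p$, since $v_p(x_j^{r_j})\ge Mr_j$ and $p^{Mr_j}\gamma_{r_j}(0,\zeta)=p^{Mr_j}c_{r_j}(\zeta)$ lies in $p^{M}\Z_p[\zeta_\ell]$ up to bounded $\ell$-power denominators, which are $p$-adic units.
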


Before proving Proposition~\ref{p:crucial}, we show how it enables the proof of Theorem~\ref{t:main}.

 \begin{proof}[Proof of Theorem~\ref{t:main}]
 We recall Proposition~\ref{p:keypsiform}:
 \begin{equation} \label{e:psifind}
 \Psi_\ell(\sA, P, Q, v) = \pm \sum_{r} \frac{P_r(\sigma)}{r! \ell^{\underline{r}}} \N(r+1)^{-1} \!\!\!\!\!\!\sum_{x \in \Z^n/\sigma_\ell \Z^n} \sB_{\1+r}^{\sL, -x_1}(\sigma_\ell^{-1}(x + \pi_\ell v), \sigma^{-1}Q).
\end{equation}
For each $x$ in the sum above we let $y = \sigma_\ell^{-1}(x + \pi_\ell v)$ and note that $y$ has the property 
\begin{equation} \label{e:yprop}
 \textstyle{\frac{1}{\ell}} \sigma(y) \in v + \textstyle{\frac{1}{\ell}} \Z \oplus \Z^{n-1}.
\end{equation} Fix a prime $p \neq \ell$.
For each $y$ we let $\epsilon$ be as in Proposition~\ref{p:crucial} and fix a positive integer $M \ge \epsilon$.  Applying the distribution relation (\ref{e:restricteddist}) we replace the term
$\sB_{1+r}^{\sL, -x_1}(y, \sigma^{-1}Q)$ in (\ref{e:psifind}) with
\begin{equation} \label{e:usedist}
 p^{M \cdot {\underline r}} \sum_{k \in (\ell\Z/\ell p^M \Z)^n} \sB^{\sL, z}_{\1 + r}\left(\frac{y + k}{p^M}, \sigma^{-1}Q \right),  
 \end{equation}
where $z \equiv -p^{-M}x_1 \pmod{\ell}$.  By Proposition~\ref{p:crucial} and the choice of $M$, the quantity in (\ref{e:usedist}) multiplied by $\N(r + 1)^{-1}$ is congruent modulo $\Z_p$ to 
\[  \sum_{k \in (\ell\Z/\ell p^M \Z)^n} \sB_\1^{\sL, z}\left(\frac{y + k}{p^M}, \sigma^{-1}Q \right) \N\!\left(y+ k\right)^{r}
\]
Plugging this expression into (\ref{e:psifind}), we note that each coefficient $\frac{P_r(\sigma)}{r!}$ lies in $\Z_p$,  and hence $\Psi_\ell(\sA, P, Q, v)$ is congruent modulo $\Z_p$ to
\begin{equation} \label{e:usecong}
 \pm \sum_x \sum_{k \in (\ell\Z/\ell p^M \Z)^n}  \sB_\1^{\sL, z}\left(\frac{y + k}{p^M}, \sigma^{-1}Q \right) \sum_r \frac{P_r(\sigma)}{r!}\N\!\left(\frac{y+ k}{\ell}\right)^{r}.
\end{equation}
By the definition (\ref{e:prdef}), the sum over $r$ in (\ref{e:usecong}) is equal to $P(\sigma(y+k) /\ell)$, which by (\ref{e:yprop}) and the given property
\[ P(v + (\textstyle{\frac{1}{\ell}}\Z \oplus \Z^{n-1})) \subset \Z[\frac{1}{\ell}] \] 
 lies in $\Z[\frac{1}{\ell}]$.  Therefore, by Proposition~\ref{p:simplecase}, the quantity in (\ref{e:usecong}) lies in $\frac{1}{m}\Z_p$, and the theorem is proven.
\end{proof}

\begin{proof}[Proof of Proposition~\ref{p:crucial}]
  As in the classical Kubota--Leopoldt construction of $p$-adic $L$-functions over $\Q$, the proof relies on the fact that the Bernoulli polynomial $b_k(x)$ 
  begins \begin{equation} \label{e:bernbegins}
  b_k(x)=x^k-\frac k 2 x^{k-1}+\cdots. 
  \end{equation}
  
  We recall equation  (\ref{equatB}) for $\sB_{\1+r}^{\sL, z}$:
  \begin{equation}
\label{equatB2}
\sB_{\1+r}^{\sL,z}(x,Q)=- \ell^{\underline{r}}\sum_{k=1}^{\l-1} \sum_{y\in  \F_{\!\l}^{\,n}}\textbf{e}\left(\frac{k(\sL(y)-z)}\l\right) \sB_{\1+r}\left(\frac{x+y} \l,Q\right). \end{equation}
  At the expense of altering $z$, we may translate $x$ by an element of $p^M\Z^n$ 
and assume that  $x/p^M$ belongs to $[0, 1)^n.$  Furthermore, for each class in $\F_\ell^n$ we choose the representative $y \in \Z^n$ with $0 \le y_j \le \ell-1$.
   Let $J_0 = \{j \mid x_j = 0 \text{ and } r_j = 0\}$.  For $j \not \in J_0$, (\ref{e:bernbegins}) yields
\[ p^{Mr_j}B_{1 + r_j}\left(\frac{\frac{x_j}{p^M}+y_j}{ \ell}\right)\equiv p^{-M}\left(\frac{x_j}{\ell}\right)^{1 + r_j} +(1 + r_j)\left(\frac{x_j}{\ell}\right)^{r_j}\left(\frac{y_j}\ell-\frac 12\right) \mod p^{M-\epsilon_j} \Z_p, \]
 where $\epsilon_j$ depends only on $r_j,$ $\ell$  and the power of $p$ in the denominator of $x_j.$
Let $a_j \in \F_\ell$ denote the value of $\sL$ on the $j$th standard basis vector of $\Z^n$, and multiply the
previous congruence by $\mbf e(\frac{ka_jy_j}\l)$.  Summing over all $0 \le y_j \le \ell-1$, the leading term of the right side vanishes and 
we obtain
\begin{equation}\label{congruenceBej}
\begin{split}
p^{M r_j} & \sum_{y_j\in \F_\ell}\mbf e\left(\frac{ka_jy_j}\l\right) B_{1+r_j }\left(\frac{\frac{x_j}{p^M}+y_j}{ \ell}\right)\equiv \\
 & (1+r_j)\left(\frac{x_j}{\ell}\right)^{r_j}  \sum_{y_j\in \F_\ell}\mbf e\left(\frac{ka_jy_j}\l\right) B_{1}\left(\frac{\frac{x_j}{p^M}+y_j}{ \ell}\right)\mod p^{M-\epsilon_j} \Z_p[\zeta_\ell].
\end{split}
\end{equation}
% \footnote{It is striking that this last expression $B_1^{exp}(\frac{x_j}{p ^M},a_j k \mod \ell)$ actually does not depend on $x_j$ at all...  
% if $M$ is large enough. This is kind of weird, no ? we could even take $x=0$ and get same  value.}
% \textit{We know from lemma \ref{lemmaChi}  that the right-hand side has bounded denominator as $M$ goes to infinity.}

 Take the product of these congruences over all $j \not\in J_0$ and multiply by
\begin{equation*}
-\frac {\textbf{e}(-\frac {kz} \l)} m \sum_{i=1}^m \left(\prod_{j\in J_0}\frac{\sign Q_{ij}} 2\right). \end{equation*}
In view of (\ref{equatB2}),
 summing over $k = 1, \dotsc, \ell-1$  gives the desired result (after dividing by $\N(\1 + r)$ and increasing $\epsilon$ accordingly).
% We  take the product of the congruences (\ref{congruenceBej}) over all $j\notin J_0$ (notice that these simply read $0=0\mod p^M$ if $x_j=0$ and $r_j>0$).
\end{proof}

\section{Integrality of smoothed zeta functions} \label{s:integral}

Let $F$ be a totally real field of degree $n$.
In this section we combine the $\Z[1/\ell]$-integrality property of $\Psi_\ell$ proved in Theorem~\ref{t:main} with a suitable generalization of Sczech's formula (\ref{e:special}) to prove Theorem~\ref{t:integral}, recalled below.  Let $\fa$ and $\ff$ be coprime integral ideals of $F$.  Let $\fc$ be an integral ideal of $F$ with norm $\ell$ such that $(\fc, \ff) =1$.  

\begin{oldthm} 
The smoothed zeta function
\[ \zeta_{\ff, \fc}(\fa, s) = \zeta_{\ff}(\fa\fc, s) - \N\fc^{1-s}\zeta_\ff(\fa, s)
\]
assumes values in $\Z[1/\ell]$ at nonpositive integers $s$.
\end{oldthm}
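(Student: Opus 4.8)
The plan is to deduce Theorem~\ref{t:integral} by pairing the integral cocycle $\Psi_\ell^+$ (equivalently $\Psi_\ell$) with an appropriate homology class built from the arithmetic of $F$, and then identifying that pairing with the smoothed zeta value. First I would recall Sczech's formula (\ref{e:special}), $\zeta_\ff(\fa,-k) = \langle [\Psi], [\fZ_{\fa,\ff,k}] \rangle = f_k(\Psi(\sA))$, where $\sA$ is the symmetrized chain coming from a basis of the totally positive units $\equiv 1 \bmod \ff$ acting on $\fa^{-1}\ff$, and $f_k = f_{P^k,Q,v}$ evaluates a distribution at the data $(P^k, Q, v)$ attached to $F$ in (\ref{e:Pdef})--(\ref{e:vdef}). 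The smoothing operation on the cocycle side must be matched by a corresponding operation on the ideal side: choosing $\fc$ with $\N\fc = \ell$ and a suitable representative, the conjugation $\sA \mapsto \sA' = \pi_\ell \sA \pi_\ell^{-1}$ and the twists $P \mapsto \pi_\ell^{-1} P$, $Q \mapsto \pi_\ell Q$, $v \mapsto \pi_\ell v$ should correspond to replacing the module $\fa^{-1}\ff$ by $(\fa\fc)^{-1}\ff$ (so that the $\ell$-scaling in the first coordinate reflects the index-$\ell$ sublattice), while keeping the same units $A_i$ (legitimate since they also preserve $(\fa\fc)^{-1}\ff$). Under this dictionary the definition (\ref{e:psielldef}), $\Psi_\ell^+(\sA,P,Q,v) = \Psi(\sA',P',Q',v') - \ell\,\Psi(\sA,P,Q,v)$, pairs against $f_k$ to give exactly $\zeta_\ff(\fa\fc,-k) - \ell^{1+k}\zeta_\ff(\fa,-k) = \zeta_{\ff,\fc}(\fa,-k)$, using $\N\fc^{1-s}|_{s=-k} = \ell^{1+k}$.

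The concrete steps, in order, would be: (1) set up the modified data $(P_2, Q_2, v_2)$ for the pair $(\fa\fc, \ff)$ exactly as in (\ref{e:Pdef})--(\ref{e:vdef}) but with $\fa$ replaced by $\fa\fc$, choosing bases compatibly so that multiplication by $\ell$ on the first coordinate realizes $\fa^{-1}\ff \supset (\fa\fc)^{-1}\ff$; (2) verify that the same unit matrices $A_1,\dots,A_{n-1}$ give a well-defined chain $\sA$ in $\Gamma_\ell^n$ (this is where one needs $\fc$ of prime norm $\ell$ and the congruence condition defining $\Gamma_\ell$ — the units act on $(\fa\fc)^{-1}\ff$ with the required triangular-mod-$\ell$ shape); (3) define $[\fZ_{\fa,\ff,k,\ell}] \in H_{n-1}(\Gamma_\ell, M_\ell^\vee)$ accordingly, check it is a cycle exactly as for $\fZ_{\fa,\ff,k}$ (commuting $A_i$ and invariance of $f_k$); (4) invoke Sczech's formula twice together with the cocycle property, or more precisely track through the decomposition and the transformation laws (\ref{e:pequal}), (\ref{e:dotprod}), (\ref{e:det}) already established in Section~\ref{s:smoothed}, to prove the clean identity (\ref{e:intscz}): $\zeta_{\ff,\fc}(\fa,-k) = \langle [\Psi_\ell^+], [\fZ_{\fa,\ff,k,\ell}] \rangle$; (5) observe that $P_2$ (the norm form for $(\fa\fc)^{-1}\ff$, suitably scaled by $\N(\fa\fc)$) satisfies the integrality hypothesis (\ref{e:pcond}) — its values on $v_2 + \frac1\ell\Z \oplus \Z^{n-1}$ lie in $\Z[1/\ell]$ because the trace-dual lattice and the $\ell$-scaling are arranged precisely so that norms of algebraic integers appear — and likewise $P_2^k$; (6) conclude by Theorem~\ref{t:main} that $\Psi_\ell(\sA, P_2^k, Q_2, v_2) \in \frac1m\Z[1/\ell]$, hence $\Psi_\ell^+(\sA,P_2^k,Q_2,v_2)\in \frac{1}{2m}\Z[1/\ell]$, so the pairing lands in $\frac{1}{2m}\Z[1/\ell]$; since $m = n$ here one gets $\zeta_{\ff,\fc}(\fa,-k) \in \frac{1}{2n}\Z[1/\ell]$, and then a rationality argument (the value is a priori rational by Siegel--Klingen, and $p$-integral for all $p\neq\ell$) upgrades this to $\Z[1/\ell]$ — or one simply cites the refined argument of \cite{CDG} using $m=1$ to get $\Z[1/\ell]$ directly.

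I expect the main obstacle to be step (2) together with step (5): namely arranging the $\Z$-bases of $\fa^{-1}\ff$ and $(\fa\fc)^{-1}\ff$ and the trace-dual bases so that \emph{simultaneously} (a) the index-$\ell$ inclusion is realized by $\mathrm{diag}(\ell,1,\dots,1)$ in the chosen coordinates, (b) the unit matrices $A_i$ land in the congruence subgroup $\Gamma_\ell$ with the correct triangular-mod-$\ell$ shape (so that $\sA'=\pi_\ell\sA\pi_\ell^{-1}$ makes sense), and (c) the norm polynomial $P_2$ satisfies the delicate integrality condition (\ref{e:pcond}), which requires $P_2(v_2 + \frac1\ell\Z\oplus\Z^{n-1})\subset\Z[1/\ell]$ — this is exactly the assertion that $\N(\fa\fc)\cdot\N(\text{algebraic integer in }\fc^{-1}\fa^{-1}\ff + \text{adjustments})$ is a $\ell$-integer, which hinges on the precise relation between $\fc$, the different, and the norm form. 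Handling the $e\neq\1$ (i.e.\ $k\geq 1$, $P\neq 1$) case is already subsumed by the bootstrapping in Theorem~\ref{t:main} via Proposition~\ref{p:crucial}, so no new analytic work is needed there; the remaining content is purely the arithmetic translation between the cohomological smoothing $\Psi\rightsquigarrow\Psi_\ell^+$ and the zeta-function smoothing $\zeta_\ff\rightsquigarrow\zeta_{\ff,\fc}$.
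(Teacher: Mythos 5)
Your proposal follows essentially the same route as the paper's proof: choose a basis $\{w_i\}$ of $\fa^{-1}\ff$ so that $\{\frac{1}{\ell}w_1, w_2, \dots, w_n\}$ is a basis of $\fa^{-1}\fc^{-1}\ff$ (forcing the unit matrices into $\Gamma_\ell$), normalize $P$ by $\N(\fa\fc)$ so that $P(v+\frac{1}{\ell}\Z\oplus\Z^{n-1})\subset\Z$ via the bijection with $1+\fa^{-1}\fc^{-1}\ff$, apply Sczech's formula to both $\fa$ and $\fa\fc$ to obtain $\zeta_{\ff,\fc}(\fa,-k)=\Psi_\ell^+(\sA,P^k,\tilde{Q},v)$, and invoke Theorem~\ref{t:main} plus the refinement of \cite{CDG} with a single linear form to remove the factor $\frac{1}{2n}$. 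The only caveat is your parenthetical alternative for removing that factor (a priori rationality plus $p$-integrality for $p\neq\ell$) is not actually available, since Theorem~\ref{t:main} only gives membership in $\frac{1}{m}\Z[\frac{1}{\ell}]$; the citation of \cite{CDG}, which is what the paper does, is the correct way to conclude.
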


\begin{proof}
Fix a basis $\{w_1, \dotsc, w_n\}$ for $\fa^{-1}\ff$ such that $\{\frac{1}{\ell} w_1, w_2, \dotsc, w_n\}$ is a basis for $\fa^{-1}\fc^{-1}\ff$.  Let $\{\epsilon_1, \dotsc,  \epsilon_{n-1}\}$ denote a basis of the group of totally positive units of $F$ congruent to 1 modulo 
$\ff$.  Let $A_1, \dotsc, A_{n-1}$ be the matrices representing multiplication by the $\epsilon_i$ on $\fa^{-1}\ff$ in terms of the basis 
$w = (w_1, \dotsc, w_n)$, i.e.\ such that
\[ w \epsilon_i = w A_i \qquad i = 1, \dotsc, n. \]
The matrices $A_i$ lie in $\Gamma_{\ell}$, since left multiplication by these matrices preserves the lattice of column vectors
$ \frac{1}{\ell}\Z \oplus \Z^{n-1}  \cong \fa^{-1}\fc^{-1}\ff $ (where the isomorphism is given by dot product with $w$).

Let $P \in \Z[\frac{1}{\ell}][X_1, \dots, X_n]$ denote the homogeneous polynomial of degree $n$ given up to a scalar by the norm:
\begin{equation} \label{e:Pdef2}
% P(X_1, \dotsc, X_n) = \ell^{1-n}\N(\fa)\N(w_1X_1 + \cdots + w_nX_n) 
 P(X_1, \dotsc, X_n) = \N(\fa\fc)\N(w_1X_1 + \cdots + w_nX_n).
 \end{equation}
 Let $\tilde{Q} = (Q_1, \dotsc, Q_n)$ be the $n$-tuple of linear forms given by
\begin{equation} \label{e:Qdef2} Q_i = \tau_i(w_1^*)X_1 + \cdots + \tau_i(w_n^*)X_n,  \end{equation}
where $\{w_1^*, \dots, w_n^*\}$ is the dual basis  with respect to the trace form on $F$ (i.e.\ $\Tr(w_iw_j^*) = \delta_{ij}$),
and the $\tau_i$ 
denote the embeddings $F \hookrightarrow \R$.
Define the column vector \begin{equation} \label{e:vdef2} v = (\Tr(w_1^*), \dotsc, \Tr(w_n^*)), \end{equation}
so that \[ 1 = v_1 w_1 + v_2w_2 + \cdots + v_nw_n. \]
Dot product with $(w_1, \dotsc, w_n)$ provides a bijection
\begin{equation} \label{e:vinclude}
v + \textstyle\frac{1}{\ell}\Z \oplus \Z^{n-1} \leftrightarrow 1 + \fa^{-1}\fc^{-1}\ff,
\end{equation}
so $P$ and $v$ satisfy the key property
\begin{equation} \label{e:pinclude}
 P(v + \textstyle\frac{1}{\ell}\Z \oplus \Z^{n-1}) \subset \Z[\textstyle{\frac{1}{\ell}}]
\end{equation}
 of Theorem~\ref{t:main}.

The matrices $\{A_i\}$ give rise to a homogeneous $(n-1)$-chain in the standard way, which we write using the bar notation:
$$[A_1\mid \ldots \mid A_{n-1}] =(1, A_1, A_1A_2, \ldots, A_1A_2\cdots A_{n-1})\in \Gamma_{\ell}^n.$$
We symmetrize this chain by defining
\begin{equation} \label{e:cadef}
 \cA(A_1, \dotsc, A_{n-1}) := \sum_{\pi\in S_{n-1}} \sgn(\pi) [A_{\pi(1)}\mid \cdots \mid A_{\pi(n-1)}] \in \Z[\Gamma_{\ell}^n].
\end{equation}
Let 
\begin{equation} \label{e:adef}
 \sA = \rho \cdot \cA(A_1, \dotsc, A_{n-1}).
 \end{equation}
Here $\rho=\pm 1$ is a sign defined as follows: let $\tau_1, \dotsc, \tau_n$ denote the real embeddings of $F$, and consider
the square matrices
\[ W = (\tau_i(w_j))_{i,j=1}^{n}  \quad \text{ and} \quad  
R=(\log \tau_i(\epsilon_{j}))_{i, j=1}^{n-1}. \]
Then $\rho=(-1)^{n-1}\sign(\det W)\sign(\det R).$

For integers $k \ge 0$, Sczech's formula ([Sc2, Corollary p. 595])  reads 
\begin{equation} \label{e:zeta1}
% \zeta_{\mf f}(\fa, -k)= \ell^{(n-1)k}\Psi(\sA, P^{k},Q, v),
\zeta_{\mf f}(\fa, -k)= \ell^{-k}\Psi(\sA, P^{k},\tilde{Q}, v),
\end{equation}
where the  $\ell$-power term arises from the extra factor of $\N\fc = \ell$ in the definition of $P$.
Applying this formula again with $\fa$ replaced by $\fa\fc$ gives
\begin{equation} \label{e:zeta2}
%\zeta_{\mf f}(\fa\fc, -k)= \ell^{nk}\Psi(\sA', (P')^{k},Q', v'),
\zeta_{\mf f}(\fa\fc, -k)= \Psi(\sA', (P')^{k},\tilde{Q}', v'),
\end{equation} 
where \[ \sA' = \pi_\ell A \pi_\ell^{-1}, \quad P' = \pi_\ell^{-1}P, \quad  \tilde{Q}' = \pi_\ell \tilde{Q}, \quad \text{ and } \quad v' = \pi_\ell v \]
as in Section~\ref{s:smoothed}.

 Combining (\ref{e:zeta1}) and (\ref{e:zeta2}), we find
\begin{eqnarray}
\zeta_{\ff, \fc}(\fa, -k) \!\!\! & =  &\!\!\! \zeta_\ff(\fa\fc, -k) - \ell^{1+k}\zeta(\fa, -k). \nonumber \\
%& = &\!\!\!  \ell^{nk}\left( \Psi(\sA', (P')^{k},Q', v') - \ell \Psi(\sA, P^{k},Q, v) \right) \nonumber \\
%& = &\!\!\!  \Psi_\ell(\sA, P^k, Q, v). \label{e:zetaaspsi}
& = &\!\!\!  \Psi(\sA', (P')^{k},\tilde{Q}', v') - \ell \Psi(\sA, P^{k},\tilde{Q}, v)  \nonumber \\
& = &\!\!\!  \Psi_\ell^+(\sA, P^k, \tilde{Q}, v). \label{e:zetaaspsi}
\end{eqnarray}
The result $\zeta_{\ff, \fc}(\fa, -k) \in \frac{1}{2n}\Z[1/\ell]$ now follows from Theorem~\ref{t:main}.

The denominator $2n$ may be removed by invoking the following result proven in \cite{CDG}, which 
states that (\ref{e:zetaaspsi}) still holds with $\Psi^+_\ell$ replaced by $\Psi_\ell$, and with $\tilde{Q}$ replaced
by any one of the $n$ individual linear forms defining it:

\begin{theorem}  Let the notation be as above, and let $Q(X_1, \dotsc, X_n) = \sum_{i=1}^{n} \tau(w_i^*) X_i$ for 
a real embedding $\tau: F \hookrightarrow \R$.  Then
\[ \zeta_{\ff, \fc}(\fa, -k) = \Psi_\ell(\sA, P^k, Q , v). \]
\end{theorem}
Now $\zeta_{\ff, \fc}(\fa, -k) \in \Z[1/\ell]$  follows from Theorem~\ref{t:main}.
\end{proof}

\begin{remark}  \label{r:candl}
Note that \[ \zeta_{\ff, \fb\fc}(\fa, s) = \zeta_{\ff, \fc}(\fa\fb, s) + \N\fc^{1-s}\zeta_{\ff, \fb}(\fa, s), \]
so we obtain more generally that $\zeta_{\ff, \fc}(\fa, -k) \in \Z[1/\N\fc]$ for nonnegative integers $k$
when $\fc$ is a product of ideals with prime norm.  Deligne--Ribet show that this result holds for arbitrary integral ideals $\fc$.
Extending our methods to obtain this general result seems difficult, since the fact that the level $\ell$ of our modular group 
$\Gamma_{\ell}$ is squarefree appears at face value to be essential in the argument of Proposition~\ref{p:simplecase}.
\end{remark}

\section{$p$-adic measures and $p$-adic zeta functions} \label{s:padic}

In this section we interpret the cocycle $\Psi_\ell$ in terms of $p$-adic measures and use this perspective to prove Theorem~\ref{t:padic} of the Introduction on the existence of $p$-adic zeta functions.  

\subsection{$p$-adic measures associated to $\Psi_\ell$}

Let $\X = \Z_p^n$, and let 
\[ \Gamma_{\ell, p} := \Gamma_0(\ell \Z[1/p]) = \Gamma_{\ell} \cap \GL_n(\Z[1/p]). \]

Given  $\sA \in \Gamma_{\ell, p}^n$, $Q \in \sQ$, and $v \in \Q^n$, we define a $\frac{1}{m}\Z[1/\ell]$-valued measure 
$\mu_\ell = \mu_\ell(\sA, Q, v)$ on 
\[ \X_v :=  v + \X \subset \Q_p^n \]
as follows.  Let $\sigma$ denote the matrix whose columns are the first columns of the matrices in the tuple $\sA$.  If $\det(\sigma) = 0$, then $\mu_\ell$ is the 0 measure.  Suppose now that $\det(\sigma) \neq 0$.

A vector $a \in  \Z^n$ and a nonnegative integer $r$ give rise to the compact open subset 
\[ a + p^r \X \subset \X.\] 
These sets form a basis of compact open subsets of $\X$, and hence their translates by $v$ form a basis of compact open subsets of $\X_v$.
We define $\mu_\ell$ by applying $\Psi_\ell$ with the constant polynomial $P=1$:
\begin{equation} \label{e:mudef}
 \mu_\ell(\sA, Q, v)(v + a + p^r \X) = \Psi_\ell\left(\sA, 1, Q, \frac{v + a}{p^r}\right) \in \textstyle{\frac{1}{m}}\Z[\textstyle{\frac{1}{\ell}}] \subset \textstyle{\frac{1}{m}}\Z_p. 
 \end{equation}
It is easily checked that this assignment is well-defined, and that the distribution relation for $\Psi_\ell$ yields a corresponding distribution relation for $\mu_\ell$.

Let $\cV = \Q^n/\Z^n$.  Let $\cM_p$ denote the space of functions that assigns to each $(Q, v) \in \sQ \times \cV$ a $\C_p$-valued measure $\alpha(Q, v)$ on $\X_v$ such that $\alpha(Q, pv)(pU) = \alpha(Q, v)(U)$ for all $U \subset \X_v$.
The space $\cM_p$ naturally has the structure of a $\Gamma_{\ell, p}$-module given by
\[ (\gamma \alpha)(Q, v)(U) := \alpha(AQ, Av)(AU), \]
where $A = \lambda \gamma$ is chosen such that $\lambda$ is a power of $p$ and $A \in M_n(\Z)$.
\begin{proposition} \label{p:mucoc}
The function $\mu_\ell: \Gamma_{\ell,p}^n \lra \cM_p$ is a homogenous $(n-1)$-cocycle.
\end{proposition}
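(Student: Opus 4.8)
The plan is to realize $\mu_\ell$ as the image of Sczech's smoothed cocycle $\Psi_\ell$ under a $\Gamma_{\ell,p}$-equivariant ``distributions-to-measures'' transform applied to the specialization at the constant polynomial $P=1$. Two facts underlie everything. First, by Proposition~\ref{p:keypsiform} the quantity $\Psi_\ell(\sA, 1, Q, w)$ involves $Q$ only through $\sB_{\1}$, hence, by the definition of $\sB_e$, only through the signs of linear combinations of the entries of $Q$; in particular it is unchanged when $Q$ is rescaled by a positive rational. Second, by Theorem~\ref{t:main} in the case $P=1$ we have $\Psi_\ell(\sA, 1, Q, w) \in \frac{1}{m}\Z[1/\ell] \subset \frac{1}{m}\Z_p$. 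The first fact, combined with the distribution relation (\ref{e:dist}) applied with $\lambda = p$, shows that $a + p^r\X \mapsto \Psi_\ell(\sA, 1, Q, \frac{v+a}{p^r})$ is finitely additive on the basic compact opens of $\X_v$, and hence extends to a measure; the second fact shows this measure is $\frac{1}{m}\Z_p$-valued, so $\mu_\ell(\sA, Q, v) \in \cM_p$. Since $\Psi_\ell$ only sees its last argument modulo $\Z^n$, formula (\ref{e:mudef}) is independent of the representative $(a,r)$ of a given basic open, and $\mu_\ell(\sA, Q, pv)(pU) = \mu_\ell(\sA, Q, v)(U)$ is immediate; this is the well-definedness noted before the proposition.

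A measure in $\cM_p$ is determined by its values on the basic opens $v + a + p^r\X$, so both the cocycle identity and the homogeneity of $\mu_\ell$ may be checked after evaluating on such a set. For the cocycle identity, evaluating $\partial\mu_\ell(g_0,\dots,g_n)$ on $v+a+p^r\X$ gives $\sum_{i=0}^{n}(-1)^i \Psi_\ell(g_0,\dots,\widehat{g_i},\dots,g_n; 1, Q, \frac{v+a}{p^r})$, which is $(\partial\Psi_\ell)(g_0,\dots,g_n)$ evaluated at $(1, Q, \frac{v+a}{p^r})$ and therefore vanishes by Proposition~\ref{propoDPsi2}, since $\Gamma_{\ell,p} \subseteq \Gamma_\ell$. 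For homogeneity, evaluating $\mu_\ell(\gamma g_0,\dots,\gamma g_{n-1})$ on $v+a+p^r\X$ gives $\Psi_\ell(\gamma g_0,\dots,\gamma g_{n-1}; 1, Q, \frac{v+a}{p^r})$, which by homogeneity of $\Psi_\ell$ equals $(\gamma\cdot\Psi_\ell(g_0,\dots,g_{n-1}))(1, Q, \frac{v+a}{p^r})$; using $A^t\cdot 1 = 1$, formula (\ref{e:vaction}) expands this to $\sgn(\det A)\sum_{s\in\Z^n/A\Z^n}\Psi_\ell(g_0,\dots,g_{n-1}; 1, A^{-1}Q, A^{-1}(s+\frac{v+a}{p^r}))$, where $A = \lambda\gamma$ with $\lambda$ a power of $p$ and $A\in M_n(\Z)$. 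It then remains to match this with the pushforward value $\mu_\ell(g_0,\dots,g_{n-1})(AQ, Av)(Av + Aa + p^rA\X)$ dictated by the $\Gamma$-action on $\cM_p$.

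This matching is the only real obstacle, and it is a finite-lattice bookkeeping computation. Since $\gamma \in \GL_n(\Z[1/p])$, the determinant $\det A$ is, up to sign, a power of $p$, so $A\Z_p^n$ is a sublattice of $\Z_p^n$ of $p$-power index $|\det A| =: p^t$, with $p^{r+t}\Z_p^n \subseteq p^r A\Z_p^n \subseteq p^r\Z_p^n$. Decomposing the coset $Av + Aa + p^r A\X$ into level-$p^{r+t}$ basic opens of $\X_{Av}$ and using (\ref{e:mudef}), the pushforward value becomes a finite sum of terms $\Psi_\ell(g_\bullet; 1, AQ, \cdot)$; applying the distribution relation (\ref{e:dist}) for $\Psi_\ell$ to re-sum these, and invoking the sign-invariance from the first paragraph to replace every rescaling of $Q$ — both $AQ$ here and $A^{-1}Q$ in (\ref{e:vaction}) — by $Q$ itself, one checks that the two expressions agree, the sign $\sgn(\det A)$ appearing exactly as in (\ref{e:vaction}). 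The same distribution relation shows that (\ref{e:mudef}), like (\ref{e:vaction}), does not depend on the auxiliary choice of $\lambda$. With the two $\Gamma$-module structures thus matched, $\mu_\ell$ is the image of the homogeneous $(n-1)$-cocycle $\Psi_\ell$ under a $\Gamma_{\ell,p}$-equivariant map, hence is itself a homogeneous $(n-1)$-cocycle; the main difficulty, to reiterate, is purely the lattice-coset bookkeeping reconciling the pushforward action on $\cM_p$ with the summation formula (\ref{e:vaction}).
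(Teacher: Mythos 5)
Your proposal is correct and follows exactly the route the paper takes: the paper's entire proof of Proposition~\ref{p:mucoc} is the one-line assertion that it ``follows directly from the fact that $\Psi_\ell$ is a cocycle,'' and your argument is a faithful, much more detailed expansion of that claim (well-definedness and additivity via the distribution relation and the sign-invariance of $\Psi_\ell(\cdot,1,Q,\cdot)$ under positive rescaling of $Q$, boundedness via Theorem~\ref{t:main}, and the cocycle/homogeneity identities transported from Proposition~\ref{propoDPsi2}). The only caveat is one you already flag as the real content: the bookkeeping reconciling (\ref{e:vaction}) with the $\Gamma$-action on $\cM_p$ as literally written requires the sign $\sgn(\det A)$ (and a consistent choice of direction for the action), an imprecision that sits in the paper's definition of $\cM_p$ rather than in your argument.
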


Proposition~\ref{p:mucoc} follows directly from the fact that $\Psi_\ell$ is a cocycle. The following theorem shows that the cocycle $\Psi_\ell$
can be recovered from the cocycle of measures $\mu_\ell$; in other words, the cocycle $\Psi_\ell$ specialized to $P=1$ determines its value on all $P \in \sP$.

\begin{theorem}  \label{t:mint} 
For any $P \in \sP$, $a \in \Z^n$, and $M \in M_n(\Z) \cap \Gamma_{\ell, p}$, we have
\begin{equation}
\label{e:psiintM}
 \int_{v+a + M(\X)} P(x) d\mu_\ell(A, Q, v) = \sgn(\det(M)) \cdot \Psi_\ell(M^{-1}A, M^t P, M^{-1}Q, M^{-1}(v+a)).
\end{equation}
In particular
we have
\begin{equation} \label{e:psiint}
\int_{\X_v} P(x) \ d\mu_\ell(\sA, Q, v)(x) =  \Psi_\ell(\sA, P, Q, v).
\end{equation}
\end{theorem}

\begin{proof}
 It suffices to prove the result when $P$ is homogeneous of degree $d$.
 We follow closely the proof of Theorem~\ref{t:main} given in Section~\ref{s:main}.
It was shown there (see (\ref{e:usecong})) that there exists an integer $\epsilon$ such that for each positive integer $N \ge \epsilon$, the quantity \begin{equation}
 \Psi_\ell(M^{-1}\sA, M^tP, M^{-1}Q, M^{-1}(v+a))\end{equation}
  is congruent to
\begin{equation} \pm \sum_x \sum_{k \in (\ell\Z/\ell p^N\Z)^n} \sB_\1^{\sL', -p^{-N}x_1}\left(\frac{y+k}{p^N}, \sigma^{-1}Q\right) P\left(\frac{\sigma (y+k)}{\ell}\right)
\label{e:psilpm} \end{equation}
modulo $p^{N - \epsilon} \Z_p$.  Here $\sigma$ again denotes the matrix of first columns of $\sA$, scaled by an integer relatively prime to $\ell$ such that $M^{-1} \sigma \in M_n(\Z)$.  Meanwhile, $x$ sums over representatives in $\Z^n$ for $\Z^n/\sigma_\ell'\Z^n$, where
$\sigma_\ell' = \pi_{\ell} \ell^{-1} M^{-1} \sigma\in M_n(\Z)$,  and $y = (\sigma_\ell')^{-1}x + \sigma^{-1}\ell(v+a)$.  The $\pm$ sign is $(-1)^n \sign(\det M^{-1} \sigma).$   Finally, the linear form $L'$ is given by $L'(y) = \sigma_\ell'(y)_1 = $ the inner product of $y$ with the first row of $M^{-1}\sigma$.
The expression (\ref{e:psilpm}) is simplified with a change of variables.  
First replace the variable $k$ by arbitrary representatives $j$ for $\Z^n/p^N\Z^n$ (not necessarily divisible by $\ell$) such that $j \equiv k \pmod{p^N}$;
the expression $\sB_\1^{\sL', -p^{-N}x_1}\left(\frac{y+k}{p^N}, \sigma^{-1}Q\right)$ is seen from the definitions to equal
$\sB_\1^{\sL', -p^{-N}(x_1 + \sL'(j))}\left(\frac{y+j}{p^N}, \sigma^{-1}Q\right)$.
Then let $u = x + \sigma_\ell'(j)$; by Lemma~\ref{l:bijlem} the expression  (\ref{e:psilpm}) is congruent modulo $p^N$ to:
\begin{equation} \label{e:uchange}
 \pm \!\!\!\!\!\! \sum_{u \in \Z^n/p^N \sigma_\ell' \Z^n} \sB_{\1}^{\sL', -p^{-N}u_1} \left( \frac{(\sigma_\ell')^{-1}(u) + \sigma^{-1}\ell (v+a)}{p^N}, \sigma^{-1}Q \right)P(M\pi_\ell^{-1} u + v + a). 
 \end{equation}

Let us meanwhile evaluate the Riemann sums approximating the integral 
on the right side of (\ref{e:psiintM}). 
There is a $\delta$ depending on the powers of $p$ in the denominator of $P(v)$ such that for $N$ large we have
\begin{equation} \label{e:mintcong}
 \int_{v + a + M\X} P(x) \ d\mu_\ell(x) \equiv   \sum_{h \in \Z^n/p^N \Z^n} P(v + a + Mh) \mu_\ell(v + a + Mh + p^N M\X) \pmod{p^{N-\delta}\Z_p} 
 \end{equation}
 Let $r$ be large enough that $p^r \X \subset M\X$.  We can then apply the definition of $\mu_\ell$ using the decomposition
 \[ v + a+ Mh + Mp^N\X = \bigsqcup_{s \in M\Z^n / p^r \Z^n} (v + a +Mh + sp^N + p^{N+r}\X).
 \]
 Using the change of variables $j = Mh + sp^N$, we obtain  by Lemma~\ref{l:bijlem} that the right side of (\ref{e:mintcong}) is equal to
 \begin{equation}
\pm  \sum_{j \in M\Z^n/p^{N+r} \Z^n} P(v + a + j) \cD_{\ell}\!\left(\sigma, \1,Q, \frac{v + a + j}{p^{N+r}} \right), \label{e:pintM}
\end{equation}
where the $\pm$ sign is $(-1)^n \sign(\det \sigma)$.
We rename $N+r$ as $N$ for simplicity, since $r$ is fixed and we will be taking $N \rightarrow \infty$.
Applying Lemma~\ref{l:decompose} for the occurrence of $\cD_\ell$ in (\ref{e:pintM}), we obtain
 \begin{equation}
\pm   \sum_{j \in M\Z^n/p^{N} \Z^n} \sum_{z \in \Z^n/\sigma_\ell \Z^n} P(v + a + j) \sB_1^{L, - z_1}(\sigma_\ell^{-1}(z + \pi_\ell \left(\frac{v + a + j}{p^{N}}\right)), \sigma^{-1} Q). \label{e:pintM2}
   \end{equation}
 Now if we fix representatives $\{j \}$ and $\{ z\}$ for 
 $M\Z^n/p^{N} \Z^n$ and $\Z^n/\sigma_\ell \Z^n$ respectively, then by Lemma~\ref{l:bijlem} the map $(j, z) \mapsto  \pi_\ell M^{-1} j + p^N \pi_\ell M^{-1} \pi_\ell^{-1} z$
 gives a bijection
 \[ M\Z^n/p^{N} \Z^n  \times \Z^n/\sigma_\ell \Z^n \longleftrightarrow \Z^n/ p^N \sigma_\ell' \Z^n. 
 \]
% (As in the proof of Lemma~\ref{l:decompose}, this is simply a bijection of sets dependent on the chosen representatives
%  and not a group homomorphism.)
 The change of variables $u = \pi_\ell M^{-1} j + p^N \pi_\ell M^{-1} \pi_\ell^{-1} z$ then shows that the expressions 
 (\ref{e:uchange}) and (\ref{e:pintM2}) are congruent modulo $p^{N - \max(\delta, \epsilon)}$ up to the discrepancies in the $\pm$ signs, which is
 $\sign(\det M)$.  Here we use the fact that 
 \[ L'(y) \equiv - p^{-N} u_1 \Leftrightarrow L(y) \equiv - p^{-N} M_{11} u_1 \Leftrightarrow L(y) \equiv - z_1 \pmod{\ell}. \]
 Letting $N \rightarrow \infty$ gives the desired result.

\end{proof}

\subsection{$p$-adic zeta functions}

% 4 instances of   \textbf{ADDbijectionLEMMA} For the purpose of defining $p$-adic zeta functions and for future applications, it is convenient to 
% generalize our setting and consider imprimitive zeta functions as follows.  Let $\fg$ be a finite product of distinct prime ideals of $F$ such that $(\fg, \ff\fc) = 1$. 
% Define \[ Z_{\ff, \fg}(\fa, s) = \sum_{\stack{\fb \sim \fa}{(\fb, \ff\fg)=1}} \frac{1}{\N\fb^s}, \qquad \real(s) > 1, \]
%and the corresponding smoothed zeta function
%\[ Z_{\ff, \fg, \fc}(\fa, s) = Z_{\ff, \fg}(\fa\fc, s) - \N\fc^{1-s}Z_{\ff, \fg}(\fa, s). \]

%\begin{remark}  The primitive zeta function $\zeta_{\ff, \fc}(\fa, s)$ considered earlier is obtained by setting $\fg = 1$.
%Furthermore, the imprimitive zeta function $Z_{\ff, \fg, \fc}(\fa, s)$ is easily expressed in terms of primitive zeta functions as follows:
%\[ Z_{\ff, \fg, \fc}(\fa, s) = \sum_{\stack{\fb \in \Cl^+(\ff\fg)}{\fb \sim_\ff \fa}} \zeta_{\ff\fg, \fc}(\fb, s),
%\]
%where $\Cl^+(\ff\fg)$ denotes the narrow ray class group modulo $\ff\fg$, and $\fb \sim_\ff \fa$ denotes equivalence in $\Cl^+(\ff)$.
%In Tate's notation (see~\ref{}), $Z_{\ff, \fg, \fc}(\fa, s)$ is written $\zeta_{S, T}(\sigma, s)$ where $S$ is the set of archimedean primes together with those dividing $\ff\fg$, $T = \{\fc\}$ when $\fc$ is prime, and $\sigma$ is the Frobenius of $\fa$ in the Galois group of the narrow ray class field of $F$ of conductor $\ff$.
%\end{remark}

We now return to the setting of a totally real field $F$.  Let $p$ be a rational prime.
% and write $\ff = \fn\fq$ where $(\fn, p)=1$ and $\fq$ is divisible only by the primes above $p$.  Let $N$ denote the positive generator of $\fn \cap \Z$.  
Let $\fa$ be an integral ideal of $F$ and let $\fc$ be an ideal of norm $\ell$ such that $(\fa\fc, \ff)=1$.
We will use the $p$-adic measures defined above to construct the $p$-adic zeta function 
$\zeta_{\ff, \fc, p}(\fa, s)$ of Theorem~\ref{t:padic}, which we recall below.

\begin{oldthm}  There exists a unique $\Z_p$-valued analytic function
$\zeta_{\ff, \fc, p}(\fa, s)$ of the $p$-adic variable $s \in \cW$ such that
\begin{equation} \label{e:interp}
 \zeta_{\ff, \fc, p}(\fa,  - k) = \zeta^*_{\ff, \fc}(\fa,  -k)
\end{equation}
for all nonnegative integers $k$.
\end{oldthm}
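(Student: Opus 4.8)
The plan is to construct $\zeta_{\ff,\fc,p}(\fa,\cdot)$ by $p$-adic integration against the measure-valued cocycle $\mu_\ell$ of Section~\ref{s:padic}, mirroring how the classical values $\zeta_{\ff,\fc}(\fa,-k)$ were recovered from $\Psi_\ell$ in Section~\ref{s:integral}. Since $\zeta_\ff(\fa,s)$ depends only on the class of $\fa$ in $G_\ff$, we may assume $\fa$ is prime to $p$. Take the basis $w=(w_1,\dotsc,w_n)$ of $\fa^{-1}\ff$, the unit matrices $A_i$, the symmetrized chain $\sA$, the norm polynomial $P$, a single linear form $Q$, and the vector $v$ exactly as in the proof of Theorem~\ref{t:integral}; then the $A_i$ lie in $\Gamma_{\ell,p}$, and $(P,v)$ satisfies the key property (\ref{e:pinclude}). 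Let $\mu_\ell=\mu_\ell(\sA,Q,v)$ be the associated measure on $\X_v=v+\Z_p^n$. Since $\ell\neq p$ we have $\tfrac1\ell\Z\oplus\Z^{n-1}\subset\Z_p^n$, so by (\ref{e:pinclude}) and density $P$ maps $\X_v$ continuously into $\Z_p$, and
\[ \X_v^*:=\{x\in\X_v:\ P(x)\in\Z_p^\times\} \]
is a compact open subset of $\X_v$; under the identification of $\X_v$ with $(1+\fa^{-1}\fc^{-1}\ff)\otimes\Z_p$ by dot product with $w$, it consists of the elements that are units at every prime of $F$ above $p$. For $\kappa\in\cW$ define
\[ \zeta_{\ff,\fc,p}(\fa,\kappa):=\int_{\X_v^*}\kappa^{-1}(P(x))\ d\mu_\ell(\sA,Q,v)(x), \]
where, as usual, the symbol $-k$ stands for the character $x\mapsto x^{-k}$.

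The regularity assertions are then formal. Using the refined cocycle $\Psi_\ell$ with a single linear form $Q$, Theorem~\ref{t:main} gives that $\mu_\ell$ is $\Z[\tfrac1\ell]$-valued, hence $\Z_p$-valued. For $\kappa$ in the closed unit ball of $\cW$---in particular for every arithmetic character---the integrand $x\mapsto\kappa^{-1}(P(x))$ is continuous on the compact set $\X_v^*$ with values in $\cO_{\C_p}$, so the integral lies in $\cO_{\C_p}$, and in $\Z_p$ when $\kappa$ is arithmetic. Analyticity of $\kappa\mapsto\zeta_{\ff,\fc,p}(\fa,\kappa)$ on the rigid analytic space $\cW$ is standard: on each residue disc one expands $\kappa(u)$ as a convergent power series in $u$ and integrates term by term, the boundedness of $\mu_\ell$ guaranteeing convergence.

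The substance is the interpolation property (\ref{e:interp}). Taking $\kappa=(x\mapsto x^{-k})$ with $k\ge0$ gives $\kappa^{-1}(P(x))=P(x)^k$, so we must show $\int_{\X_v^*}P(x)^k\,d\mu_\ell(\sA,Q,v)=\zeta^*_{\ff,\fc}(\fa,-k)$. Writing $\X_v\setminus\X_v^*$ as the union, over the primes $\fp\mid p$ with $\fp\nmid\ff$, of the loci where $x$ is divisible by $\fp$, inclusion--exclusion gives
\[ \int_{\X_v^*}P(x)^k\ d\mu_\ell = \sum_{S}(-1)^{\#S}\int_{\X_{v,S}}P(x)^k\ d\mu_\ell, \]
where $S$ ranges over sets of such primes, $\fd_S=\prod_{\fp\in S}\fp$, and $\X_{v,S}\subset\X_v$ is the coset corresponding to elements divisible by $\fd_S$, of the form $v+a_S+M_S\X$ with $a_S\in\Z^n$ and $M_S\in M_n(\Z)\cap\Gamma_{\ell,p}$, $\lvert\det M_S\rvert=\N\fd_S$. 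For $S=\emptyset$ this term is $\int_{\X_v}P^k\,d\mu_\ell=\Psi_\ell(\sA,P^k,Q,v)=\zeta_{\ff,\fc}(\fa,-k)$ by Theorem~\ref{t:psiint} and the generalized Sczech formula of Section~\ref{s:integral}. For general $S$, Theorem~\ref{t:mint} evaluates the integral as $\pm\,\Psi_\ell$ applied to the data obtained from $(\sA,P,Q,v)$ by the change of lattice $M_S$; one checks this transported data is exactly the data that the recipe of Section~\ref{s:integral} attaches to the fractional ideal $\fa\fd_S^{-1}$ (with the same $\ff$ and $\fc$), up to the rescaling of $P$ by $\N\fd_S$ coming from $M_S^t$, so that by the same Sczech formula the $S$-term equals $\N\fd_S^{\,k}\,\zeta_{\ff,\fc}(\fa\fd_S^{-1},-k)$. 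Summing over $S$ reproduces precisely the inclusion--exclusion defining the removal of the Euler factors at $p$, namely $\zeta^*_{\ff,\fc}(\fa,-k)$. Carrying out this last bookkeeping---matching up the transported data and checking that signs and norm factors combine correctly---is the step I expect to be the main obstacle, but it parallels the classical computations of Cassou-Nogu\`es and those of \cite{dasgupta}, \cite{darmon-dasgupta}.

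Finally, uniqueness: any $p$-adic analytic function on $\cW$ that vanishes at all characters $x\mapsto x^{-k}$, $k\ge0$, vanishes identically, since for $k$ in a fixed residue class modulo $p-1$ (or modulo $2$ when $p=2$) these characters converge $p$-adically to an interior point of the corresponding connected component of $\cW$, namely $(1+q)^{-k}\to(1+q)^{-k_0}$ with $q=p$ or $4$. Hence the interpolation property determines $\zeta_{\ff,\fc,p}(\fa,\cdot)$ uniquely, and the proof is complete.
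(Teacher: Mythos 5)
Your construction is correct, but it is organized differently from the paper's. The paper first reduces to the case where $\ff$ is divisible by \emph{every} prime of $F$ above $p$ (by writing $\zeta_{\ff,\fc,p}(\fa,s)=\sum_{\fb\sim_\ff\fa}\zeta_{\fg,\fc,p}(\fb,s)$ for $\fg=\lcm(\ff,\prod_{\fp\mid p}\fp)$, the matching identity for $\zeta^*$ being immediate since that sum already avoids ideals meeting $p$). In that case $P(\X_v)\subset\Z_p^\times$ automatically, the domain of integration is all of $\X_v$, no Euler factors need removing, and the interpolation is a one-line consequence of Theorem~\ref{t:psiint} together with (\ref{e:zetaaspsi}). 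You instead work with general $\ff$ from the start, integrate over the unit locus $\X_v^*$, and remove the Euler factors by inclusion--exclusion over the primes above $p$ prime to $\ff$; this is precisely the content of the paper's Proposition~\ref{p:zetapb} (formula (\ref{e:zintgen})) and of identity (\ref{e:zetastar}), which the paper establishes \emph{after} the theorem because they are needed for the order-of-vanishing argument, not for the theorem itself. So your route front-loads the harder computation. The ``bookkeeping'' you flag is genuinely the crux of that computation, and in the paper it is resolved by two specific devices you should be aware of: since $\fa\fd_S^{-1}$ is not an integral ideal to which the recipe of Section~\ref{s:integral} applies, one chooses an auxiliary integral ideal $\fq_S$ prime to $\ff p$ in the inverse narrow class of $\fd_S$ with $\fd_S\fq_S=(\pi_S)$ for $\pi_S$ totally positive and $\equiv 1\pmod\ff$ (this $\pi_S$ is also what makes your coset $v+a_S+M_S\X$ exist, and the auxiliary basis of $\fa^{-1}\fd_S\ff$ must again satisfy the $\ell$-divisibility condition on its first vector); and one needs the invariance $\Psi_\ell(M^{-1}\sA M,\cdot)=\Psi_\ell(M^{-1}\sA,\cdot)$, which is \cite[Lemma 4]{Sc2}, to pass from the transported chain to the chain attached to $\fa\fq_S$. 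With those two points supplied, your sum over $S$ does reproduce (\ref{e:zetastar}) at $s=-k$, and your analyticity, integrality (via the single-form version of Theorem~\ref{t:main}, so $m=1$), and uniqueness arguments are standard and fine. If you only want the theorem and not the explicit general-$\ff$ integral formula, the paper's reduction is the cheaper path.
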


\begin{proof}
First we note that it suffices to consider the case where $\ff$ is divisible by all primes of $F$ above $p$. 
Indeed, if we let $\fg$ denote the least common multiple of $\ff$ and the primes above $p$, then we can define the $p$-adic zeta functions
attached to $\ff$ from the ones attached to $\fg$ as follows:
\[ \zeta_{\ff, \fc, p}(\fa, s) = \sum_{\fb \sim_\ff \fa} \zeta_{\fg, \fc, p}(\fb, s).
\]
Here the sum ranges over representatives $\fb$ for the narrow ideal class group $G_\fg$ whose images in $G_\ff$ are equivalent to $\fa$.
The analogous equation for the classical partial zeta functions $\zeta^*$ follows from the fact that the sum defining $\zeta^*$ ranges over ideals
relatively prime to $p$.

Therefore, suppose that $\ff$ is divisible by all primes of $F$ above $p$.  Then $\zeta^*_{\ff, \fc} = \zeta_{\ff, \fc}$.
Let $P, Q, v$, and $\sA$ be as in Section~\ref{s:integral}, 
and let $\mu_\ell = \mu_\ell(\sA, Q, v)$ as above.
As we saw in (\ref{e:vinclude})--(\ref{e:pinclude}), for any $x \in \Z^{n}$ the quantity $P(v + x)$ is the norm of an integral ideal of $F$ relatively prime to $\ff$.  Since $\ff$ is divisible by all the primes above $p$, this quantity is an integer relatively prime to $p$.  By the continuity of $P$, we find that \[ P(v+x) \in \Z_p^\times \text{ for all } x \in \X, \]  i.e. $P(\X_v) \subset \Z_p^\times$.
We may therefore define a $p$-adic analytic function on $\cW$:
\begin{equation} \label{e:zetapdef}
 \zeta_{\ff, \fc, p}(\fa, s) :=   \int_{\X_v} P(x)^{-s} \ d\mu_\ell(A, Q, v).
\end{equation}
Here we have followed the usual convention of writing $x^{-s}$ for $s(x)^{-1}$ when $x \in \Z_p^\times, s \in \cW$.

For a nonnegative integer $k$, equation (\ref{e:zetaaspsi}) and Theorem~\ref{t:mint} yield
\begin{align}
\zeta_{\ff, \fc}(\fa, -k) =& \  \Psi_\ell(\sA, P^k, Q, v) \nonumber \\
=&  \int_{\X_v} P(x)^{k} \ d\mu_\ell(A, Q, v). \label{e:zetaint}
\end{align}
Equation (\ref{e:zetaint}) gives the desired interpolation property (\ref{e:interp}).
\end{proof}

With our applications to Gross's Conjecture~\ref{c:gross} in mind, it is useful to have a formula such as (\ref{e:zetapdef}) for the $p$-adic zeta-function 
when $\ff$ is not necessarily divisible by the primes above $p$. 
Write $\ff = \ff_0 \ff_1$, where $\ff_0$ is the prime-to-$p$ part of $\ff$ and $\ff_1$ is divisible only by primes above $p$.
Let $\ff_p$ denote the product of the primes dividing $p$ that do not divide $\ff$.  Fix an integral ideal $\fa$ relatively prime to $p$. 
An elementary calculation following directly from the definitions shows that
for $s \in \C$, 
\begin{equation} \label{e:zetastar} \zeta_{\ff, \fc}^*(\fa, s) = \sum_{\fb \mid \ff_p} \mu(\fb)\N\fb^{-s} \zeta_{\ff, \fc}(\fa\fb^{-1}, s),
\end{equation}
where $\mu(\fb) = \pm 1$ is determined by the parity of the number of prime factors of $\fb$.  For integers $s \le 0$, we will express each term of (\ref{e:zetastar}) as an integral
with respect to the measure $\mu_\ell$ in such a way that the  sum can be interpolated $p$-adically.

 Define the variables $A, P, Q, v$ as in Section~\ref{s:integral} using the ideals $\fa$ and $\ff_0$.  In particular,
 $\{w_i\}$  is a $\Z$-basis of $\fa^{-1}\ff_0$.
 Dot product with 
$w = (w_1, \dots, w_n)$ gives a bijection between the spaces $\X_v = \X$ and $\cO_p = \prod_{\fp \mid p} \cO_{\fp}$.   Using this bijection we view $\mu_\ell(A, Q, v)$ as a measure on $\cO_p$.
For each prime ideal $\fp \mid p$, define $\cO_{\fp, \ff}: = 1 + \ff \cO_\fp$, and write
\[  \cO_{p, \ff} :=1 + \ff \cO_p =  \prod_{\fp \mid p} \cO_{\fp, \ff}.\]
Let $\fb$ denote an integral ideal of $F$ with $p$-power norm such that $(\fb, \ff)=1$ (so $\fb$ is a product of prime ideals dividing $\ff_p$).  Define
\[ \cO_{p, \fb, \ff} := \fb \cO_p \cap \cO_{p, \ff} = \prod_{\fp | \fb} \fb \cO_{\fp} \times \prod_{\fp \mid p, \ \fp \nmid \fb} \cO_{\fp, \ff} . \]

Write \[ 
\cO_{\fp , \ff}^* := \cO_\fp^* \cap \cO_{p , \ff}, \qquad
\cO_{p , \ff}^* := \prod_{\fp \mid p} \cO_{\fp , \ff}^*, \qquad
 \cO_{p ,\fb, \ff}^* := \prod_{\fp \mid p} \fb \cO_\fp^* \cap \cO_{p , \ff}. \]
  
% where $\cO_{p , \ff}^* := \prod_{\fp \mid p} \cO_\fp^* \cap \cO_{p , \ff}$.  We therefore arrive at the following formula for all $s \in \cW$:

The following formula generalizes (\ref{e:zetapdef}) to the current setting, where $\ff$ is not necessarily divisible by all primes above $p$.

\begin{proposition} \label{p:zetapb}
The $p$-adic zeta-function of Theorem~$\ref{t:padic}$ is given explicitly by the following integral representation:
\begin{equation} \label{e:zintgen} \zeta_{\ff, \fc, p}(\fa, s) = (\N\fa\fc)^{-s} \int_{\cO_{p , \ff}^*} (\N x)^{-s} \ d\mu_\ell(A, Q, v). 
\end{equation}
More generally, with $\fb$ as above we have
\begin{equation} \label{e:zintgenb}
 \zeta_{\ff, \fc, p}(\fa\fb^{-1}, s) = (\N\fa\fc)^{-s} \int_{\cO_{p , \fb, \ff}^*} (\N x)_p^{-s} \ d\mu_\ell(A, Q, v), 
 \end{equation}
 where  $x_p := x/p^{\ord_p(x)}$ is the unit part of $x \in \Q_p^\times$.
\end{proposition}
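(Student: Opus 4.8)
\emph{Reduction to $s=-k$.} Since $\cO_{p,\ff}^* = \cO_{p,\cO,\ff}^*$ and $(\N x)_p = \N x$ there, (\ref{e:zintgen}) is the case $\fb = \cO$ of (\ref{e:zintgenb}), so it is enough to prove (\ref{e:zintgenb}). Both sides are $\C_p$-valued analytic functions of $s \in \cW$ --- the left by Theorem~\ref{t:padic}, the right because on $\cO_{p,\fb,\ff}^*$ every local component of $x$ has the fixed valuation $\ord_\fp(\fb)$, so its unit part is a genuine $p$-adic unit and $(\N x)_p^{-s}$ varies analytically. Since the nonnegative integers $-k$ lie densely in $\cW$, it suffices to prove (\ref{e:zintgenb}) at $s = -k$ for every $k \ge 0$; by the interpolation property the left side is then $\zeta^*_{\ff,\fc}(\fa\fb^{-1},-k)$, so the proposition reduces to the identity
\[ \zeta^*_{\ff,\fc}(\fa\fb^{-1},-k) = (\N\fa\fc)^k \int_{\cO_{p,\fb,\ff}^*} (\N x)_p^{k}\, d\mu_\ell(A,Q,v), \qquad k \ge 0,\ \fb \mid \ff_p. \]

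\emph{A building-block identity.} The heart of the matter is the ``unstarred'' formula
\[ \int_{\cO_{p,\fb,\ff}} P(x)^k\, d\mu_\ell(A,Q,v) = \N\fb^{k}\,\zeta_{\ff,\fc}(\fa\fb^{-1},-k), \qquad k \ge 0, \]
valid for every integral ideal $\fb$ supported on the primes dividing $\ff_p$; here $\cO_{p,\fb,\ff} = \fb\cO_p \cap \cO_{p,\ff}$ is the compact open set of (\ref{e:zintgenb}) and $P(x) = \N(\fa\fc)\N x$ on $\X_v$. Under the identification of $\X_v$ with $1 + \fa^{-1}\ff_0\otimes\Z_p$ by dot product with $w$, a point $x$ corresponds to $\xi \in 1+\fa^{-1}\ff_0$, and $\cO_{p,\fb,\ff}$ is exactly a coset $v+a+M\X$ of a sublattice of $\X$ encoding the conditions ``$\xi \equiv 1 \bmod \ff$ at the primes of $\ff$ above $p$'' and ``$\ord_\fp(\xi) \ge \ord_\fp(\fb)$ at the primes dividing $\fb$''. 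Theorem~\ref{t:mint} then expresses $\int_{\cO_{p,\fb,\ff}} P^k\,d\mu_\ell$ as a single value $\sgn(\det M)\,\Psi_\ell(M^{-1}A, M^t P^k, M^{-1}Q, M^{-1}(v+a))$; passing to the corresponding homology classes and changing $\Z$-basis, this matches the Section~\ref{s:integral} data attached to the ideal $\fa\fb^{-1}$, the conductor $\ff$ and the smoothing ideal $\fc$ --- the lattice conditions imposed by $M$ being precisely what upgrades the congruence ``$\bmod\,\ff_0$'' to ``$\bmod\,\ff$'' --- so the refined Sczech formula of \cite{CDG} evaluates it as $\zeta_{\ff,\fc}(\fa\fb^{-1},-k)$ up to normalization. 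The normalization works out to $\N\fb^{k}$: writing $\fb' = (\xi)\fa\fb^{-1}$ for a totally positive generator $\xi$, the ``$\fa\fc$-part'' of the difference $\mu_\ell = (\fa\fc\text{-part}) - \ell\cdot(\fa\text{-part})$ contributes $\N\fb^{k}\zeta_\ff(\fa\fb^{-1}\fc,-k)$, the ``$\fa$-part'' contributes $\N(\fc\fb)^k\zeta_\ff(\fa\fb^{-1},-k)$, and the factor $\ell = \N\fc$ multiplying the second piece is exactly the $\N\fc$-smoothing of $\zeta_{\ff,\fc}$, leaving the clean $\N\fb^{k}$.

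\emph{M\"obius regrouping.} Now apply the elementary relation (\ref{e:zetastar}) with $\fa$ replaced by $\fa\fb^{-1}$,
\[ \zeta^*_{\ff,\fc}(\fa\fb^{-1},-k) = \sum_{\fd\mid\ff_p}\mu(\fd)\,\N\fd^{k}\,\zeta_{\ff,\fc}(\fa\fb^{-1}\fd^{-1},-k), \]
substitute the building-block identity for each term (the power $\N\fd^{k}$ cancels against $\N(\fb\fd)^{-k}$), and decompose $\cO_{p,\fb\fd,\ff} = \bigsqcup_{\fb\fd\mid\mathfrak{e}}\cO_{p,\mathfrak{e},\ff}^*$ with $\mathfrak{e}$ ranging over the ideals supported on the primes dividing $\ff_p$. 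On $\cO_{p,\mathfrak{e},\ff}^*$ one has $\N x = \N\mathfrak{e}\cdot(\N x)_p$, hence $P(x)^k = (\N(\fa\fc)\N\mathfrak{e})^k(\N x)_p^{k}$, so after collecting the terms attached to a fixed $\mathfrak{e}$ the coefficient of $\int_{\cO_{p,\mathfrak{e},\ff}^*}(\N x)_p^{k}\,d\mu_\ell$ is $(\N\fa\fc)^k$ times the M\"obius-type sum $\sum_{\fd:\,\fb\fd\mid\mathfrak{e}}\mu(\fd)$; as $\fb$ and $\fd$ range over squarefree divisors of $\ff_p$ this sum equals $1$ for $\mathfrak{e} = \fb$ and $0$ otherwise. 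What remains is exactly $(\N\fa\fc)^k\int_{\cO_{p,\fb,\ff}^*}(\N x)_p^{k}\,d\mu_\ell$, which is (\ref{e:zintgenb}) at $s=-k$.

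\emph{The main obstacle.} The genuinely delicate step is the building-block identity. The issue is that restricting $\mu_\ell$ to the $p$-adic coset $\cO_{p,\fb,\ff}$ and then invoking Theorem~\ref{t:mint} and the Sczech/\cite{CDG} formula must be checked to produce the \emph{conductor-$\ff$} value $\zeta_{\ff,\fc}(\fa\fb^{-1},-k)$ and not merely a conductor-$\ff_0$ quantity --- that is, the archimedean fundamental-domain-of-units combinatorics underlying Sczech's cocycle has to be reconciled with the non-archimedean congruence cutting out $\cO_{p,\fb,\ff}$. One must also keep the bookkeeping of the $\N\fc$- and $\N\fb$-powers consistent, and handle the case where $\fa\fb^{-1}$ is only a fractional ideal by replacing it with an integral representative of its ray class. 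Once that identity is in place, the reduction to $s=-k$ and the M\"obius regrouping are routine.
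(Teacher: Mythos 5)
Your strategy is the same as the paper's: reduce to $s=-k$ by interpolation, establish the single-$\fb$ ``building block''
$\int_{\cO_{p,\fb,\ff}} P(x)^k\,d\mu_\ell = \N\fb^{k}\,\zeta_{\ff,\fc}(\fa\fb^{-1},-k)$,
and combine it with (\ref{e:zetastar}) by inclusion--exclusion. You are also right that the building block is the only real content here --- but your proposal asserts it (``passing to the corresponding homology classes and changing $\Z$-basis, this matches the Section~\ref{s:integral} data\dots'') rather than proving it, and that is precisely where the work lies. The paper's argument: since $\fa\fb^{-1}$ is only fractional, choose an integral $\fq$ prime to $\ff p$ in the inverse narrow ray class of $\fb$, so $\fb\fq=(\pi)$ with $\pi$ totally positive and $\pi\equiv 1\pmod\ff$; then the Section~\ref{s:integral} data for $(\fa\fq,\ff)$, computed from the basis $\pi^{-1}u=wR^{-1}M$ of $\fa^{-1}\fq^{-1}\ff$, comes out as $\sA_{\fa\fq}=M^{-1}\sA M$, $P_{\fa\fq}=\N\fb^{-1}\,(M^tP)$, $Q_{\fa\fq}\sim M^{-1}Q$, and $v_{\fa\fq}\equiv M^{-1}(v+a)$ with $w\cdot a=\pi-1$. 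One then needs that $R$ commutes with the $A_i$, and \cite[Lemma 4]{Sc2} to replace $M^{-1}\sA M$ by $M^{-1}\sA$, before Theorem~\ref{t:mint} converts the cocycle value into $\int_{v+a+M(\X)}P^k\,d\mu_\ell$. The translation vector $a$ and the factor $\N\fb^{-1}$ in $P_{\fa\fq}$ are exactly the bookkeeping your sketch waves at, and they are easy to get wrong; so the crux of the proposition is deferred rather than established.

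A secondary issue: in the M\"obius regrouping you decompose each $\cO_{p,\fb\fd,\ff}$ as the \emph{infinite} disjoint union $\bigsqcup_{\fb\fd\mid\mathfrak e}\cO^*_{p,\mathfrak e,\ff}$ and rearrange. For $k=0$ the resulting infinite sum need not converge $p$-adically (the measures $\mu_\ell(\cO^*_{p,\mathfrak e,\ff})$ do not tend to $0$), and the union omits the locus where some $x_\fp=0$. This is avoidable: the finite identity of indicator functions $\sum_{\fd\mid\ff_p}\mu(\fd)\,\mathbf 1_{\cO_{p,\fb\fd,\ff}}=\mathbf 1_{\cO^*_{p,\fb,\ff}}$ already yields (\ref{e:zintgenb}) at $s=-k$ directly, since on $\cO^*_{p,\fb,\ff}$ one has $(\N x)^k=\N\fb^{k}(\N x)_p^{k}$; this finite inclusion--exclusion is what the paper intends.
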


\begin{proof}
We first express the zeta value $\zeta_{\ff, \fc}(\fa \fb^{-1}, -k)$ for a nonnegative integer $k$ as an integral over the space $\cO_{p, \fb, \ff}$.
Fix an integral ideal $\fq$ relatively prime to $\ff p$ whose image in $G_\ff$ is the inverse class of $\fb$.  Therefore $\fb \fq = (\pi)$ for a totally positive $\pi \equiv 1 \pmod{\ff}$.

Fix a $\Z$-basis $u = (u_1, \dots, u_n)$ for $\fa^{-1}\fb\ff$ such that $(\frac{1}{\ell}u_1, u_2, \dots, u_n)$ is a basis for 
$\fa^{-1}\fc^{-1}\fb\ff$.  Let $M$ be the matrix such that $wM = u$.  It is clear that $M \in M_n(\Z)$, and that $| \det M | = \N\fb\ff_1$ is a  power of $p$.   Furthermore, it is clear that  $M \in \Gamma_{\ell, p}$, so $M$ satisfies the conditions of Theorem~\ref{t:mint}.
 For simplicity let us choose the basis $u$ such that $\det M > 0$.

Next, let $R$ note the matrix representing multiplication by $\pi$ with respect to the basis $w$, i.e.\ such that $w \pi = w R$.  Note that $R$ commutes with the matrices $A_i$ in the definition of the chain $A$.  Now $\fa^{-1} \fq^{-1} \ff = \pi^{-1}(\fa^{-1} \fb \ff)$ has basis $\pi^{-1} u = w R^{-1} M$.  If we use this basis to define the variables $A_{\fa \fq}, P_{\fa\fq}, Q_{\fa\fq}, v_{\fa\fq}$ as in Section~\ref{s:integral} using  the ideals $\fa \fq$ and $\ff$, then we find
\begin{align*}
 A_{\fa \fq} &= M^{-1} A M, \\ P_{\fa \fq}  &= \N\fb^{-1} \cdot (M^t P), \\ Q_{\fa \fq} &\sim M^{-1}Q,
   \\ v_{\fa \fq} &\equiv M^{-1}(v + a) \pmod{\Z^n},
\end{align*} 
where $a \in \Z^n$ is chosen so that $w \cdot a = \pi - 1 \in \fa^{-1} \ff_0$.
Here $Q_{\fa\fq} \sim M^{-1}Q$ means that the corresponding tuples of linear forms are equal up to 
 up to scaling the linear forms by positive reals (which does not affect the value of the cocycle $\Psi_\ell$).

% Note that $w \cdot (v + a + M(\X)) = \cO_{p, \fb, \ff}$ (i.e. such that $w \cdot a \equiv -1 \pmod{\fb \cO_p}$).

For any nonnegative integer $k$, we therefore find
\begin{align}
\zeta_{\ff, \fc}(\fa\fq, -k) &= \  \Psi_\ell(\sA_{\fa\fq} , P_{\fa\fq}^k, Q_{\fa\fq}, v_{\fa\fq}) \nonumber \\
&= \ \N\fb^{-k} \Psi_\ell(M^{-1}AM , (M^t P)^{k}, M^{-1} Q, M^{-1}(v+a)) \nonumber \\
&= \ \N\fb^{-k} \Psi_\ell(M^{-1}A , (M^t P)^{k}, M^{-1} Q, M^{-1}(v+a)) \label{e:sclemma} \\
&=  \N\fb^{-k} \int_{v + a + M(\X)} P(x)^{k} \ d\mu_\ell(A, Q, v) \label{e:usemint} \\
&= \ (\N\fa\fc\fb^{-1})^k \int_{\cO_{p, \fb , \ff}} (\N x)^{k} \ d\mu_\ell(A, Q, v). \label{e:transfer}
\end{align}

Equation (\ref{e:sclemma}) follows from \cite[Lemma 4]{Sc2}.  Equation (\ref{e:usemint}) follows from Theorem~\ref{t:mint}.  In equation (\ref{e:transfer}) we have identified
$ v + a + M(\X)$ with $ \cO_{p, \fb , \ff}$ via dot product with $w$.
  From (\ref{e:zetastar}) and (\ref{e:transfer}) and an inclusion-exclusion argument, it follows that
\[ \zeta_{\ff, \fc}^*(\fa, -k) =   (\N\fa\fc)^k  \int_{\cO_{p , \ff}^*} (\N x)^{k} \ d\mu_\ell(A, Q, v). \]
This proves (\ref{e:zintgen}) by interpolation, and (\ref{e:zintgenb}) holds similarly.
\end{proof}

\section{Order of Vanishing at $s=0$} \label{s:oov}

\subsection{$p$-adic $L$-functions} \label{s:oovpadic}

As in the introduction, 
let $\chi: \Gal(\overline{F}/F) \rightarrow \overline{\Q}^*$ be a totally odd finite order character with conductor $\ff$.  The $p$-adic $L$-function  $L_{\fc,p}(\chi\omega, s)\colon \Z_p \longrightarrow \C_p^*$ is given by
\[
 L_{\fc, p}(\chi \omega, s) =  \sum_{\fa \in G_\ff} \chi(\fa\fc) \zeta_{\ff, \fc, p}(\fa, \langle \cdot \rangle^s).
 \] 
 % (For future applications we extend the definition of $\langle x \rangle$ to $x \in \Q_p^*$ by the rule
 % $\langle x \rangle := \langle x_p \rangle$.) 
Let $\fp_1, \dotsc, \fp_{r_\chi}$ denote the primes above $p$ such that $\chi(\fp_i) = 1$.  (In particular each $\fp_i \nmid \ff$.)  The goal of the rest of the paper is to prove Theorem~\ref{t:oov}, which states that
\[
 L_{\fc, p}^{(k)}(\chi \omega, 0) = 0  \text{ for } k < r_\chi.
 \] 
In the sequel, we write simply $r$ for $r_\chi$.
Let $G_{p, \chi} \subset G_\ff$ denote the subgroup generated by the images of $\fp_1, \dotsc, \fp_r$.  
Let $e_i$ denote the order of $\fp_i$ in $G_\ff$, and write $\fp_i^{e_i} = (\pi_i)$ for a totally positive $\pi_i \equiv 1 \pmod{\ff}$.
Let \[ e = \left(\prod_{i}^{r} e_i\right)/\# G_{p, \chi} \in \Z. \]

We then have
\begin{align}
 L_{\fc, p}(\chi \omega, s) &= \sum_{\fa \in G_\ff / G_{p, \chi}} \chi(\fa\fc) \sum_{\fb \in G_{p, \chi}} \zeta_{\ff, \fc, p}(\fa\fb^{-1}, s) \nonumber \\
&= \frac{1}{e} \sum_{\fa \in G_\ff / G_{p, \chi}} \chi(\fa\fc) \sum_{\fb \mid \prod_{i=1}^{r} \fp_i^{e_i-1}} \zeta_{\ff, \fc, p}(\fa\fb^{-1}, s) \nonumber \\
&=  \frac{1}{e} \sum_{\fa \in G_\ff / G_{p, \chi}} \chi(\fa\fc) \langle \N\fa\fc \rangle^{-s} \int_{\O} \langle \N x \rangle^{-s} d\mu_{\ell}(A_\fa, Q_\fa, v_\fa), \label{e:oint}
\end{align}
where 
\[ \O = \prod_{i=1}^{r} (\cO_{\fp_i} - \pi_i \cO_{\fp_i}) \times \prod_{\fp \mid p, \ \fp \neq \fp_i} \cO_{\fp, \ff}^*. \]
Equation (\ref{e:oint}) follows from Proposition~\ref{p:zetapb}.  (As usual, the representative ideals $\fa$ are chosen relatively prime to $\ff p$.)

In order to prove Theorem~\ref{t:oov}, it therefore suffices to show that the integral in (\ref{e:oint}) has order of vanishing at least $r$, i.e. that
\begin{equation} \label{e:oovlog}
 \int_{\O} (\log_p  \N x )^k d\mu_{\ell}(A, Q, v) = 0 \text{ for } 0 \le k < r.
\end{equation}

\subsection{Spiess' Theorems and the proof of Theorem~\ref{t:oov}}

In this section, we explain how the cocycle of measures $\mu_\ell$ can be combined with Spiess's 
cohomological formalism for $p$-adic $L$-functions to deduce (\ref{e:oovlog}), and thereby prove Theorem~\ref{t:oov}.  All of the definitions, results, and proofs in this section are due to 
Spiess \cite{sphmf}.

Denote by $E \subset \cO_F^\ast$ the group of totally positive units of $F$ congruent to 1 modulo $\ff$.  Denote by $T$ the subgroup of $F^\ast$ generated by the $\pi_i$, for $i = 1, \dotsc, r$.
Denote by $U \cong E \times T$ the subgroup of $F^\ast$ generated by $E$ and $T$.  

Write $F_p = \prod_{\fp \mid p} F_\fp$ for the completion of $F$ at $p$.
Let $C_c(F_p)$ denote the $\C_p$-algebra of $\C_p$-valued continuous functions on $F_p$
with compact support, and similarly for $C_c(F_\fp)$, for $\fp \mid p$.

 Note that $U$ acts on $C_c(F_p)$ by $(u \cdot f)(x) := f(x/u)$.
The cocycle $\mu_\ell$ along with the ideal $\fa$ allow for the definition of a homogeneous cocycle 
\[ \kappa_\fa \in Z^{n-1}(U, C_c(F_p)^\vee) \] as follows.  Given $\epsilon_1, \dots, \epsilon_n \in U$, let $A_i \in \Gamma_{\ell, p}$ denote the matrix for multiplication by $\epsilon_i$ with respect to the basis $\{w_i\}$ of $\fa^{-1}\ff_0$, and define
\[ \kappa_\fa(\epsilon_1, \dotsc, \epsilon_n)(f) = \int_{F_p} f(x) d\mu_\ell(A_1, \dotsc, A_n)(Q_\fa, v_\fa). \]
 As in the previous section, we have identified $\Q_p^n$ with $F_p$ via dot product with $(w_1, \dotsc, w_n)$, and thereby view $\mu_\ell$ as a compactly supported measure on $F_p$. 
 (Initially $\mu_\ell$ was defined on a compact subset of $\Q_p^n$, and we extend it by zero to a compactly supported measure on $\Q_p^n$.) 
The cocycle $\kappa_\fa$ yields a class $[\kappa_\fa] \in H^{n-1}(U, C_c(F_p)^\vee)$.
%  that only depends on the image of $\fa$ in $G_{\ff}$ (note check this).  that's not true!

Meanwhile, for each nonnegative integer $k$ 
we define a class $[\fL_k] \in H_{n-1}(U, C_c(F_p))$ as follows.
Let $\epsilon_1, \dotsc, \epsilon_{n-1}$ denote a basis of $E$, and define
\[ \fL_k := \cA(\epsilon_1, \dotsc, \epsilon_{n-1}) \otimes \mathbf{1}_\O \cdot (\log_p \N x)^k \in
Z_{n-1}(U, C_c(F_p)), \]
where $\cA$ is defined as in (\ref{e:cadef}).
Tracing through these notations, it is clear that the left side of (\ref{e:oovlog}) is given by
\begin{equation} \label{e:intpair}
 \int_{\O} (\log_p  \N x )^k d\mu_{\ell}(A, Q, v)  = \pm \langle [\kappa_\fa], [\fL_k] \rangle, 
 \end{equation}
where the pairing on the right is the usual cap product
\[ H^{n-1}(U, C_c(F_p)^\vee) \times H_{n-1}(U , C_c(F_p)) \longrightarrow \C_p \]
(and the $\pm$ is given by the sign $\rho$ appearing in the definition (\ref{e:adef}) of $A$).
In view of the discussion of Section~\ref{s:oovpadic} (in particular (\ref{e:oovlog})) and (\ref{e:intpair}), Theorem~\ref{t:oov} will follow if we can prove that \begin{equation} \label{e:cyczero}
 [\fL_k] = 0 \text{ in } H_{n-1}(U , C_c(F_p)) \text{ for } k < r. 
 \end{equation}

The functions $ \mathbf{1}_\O \cdot (\log_p \N x)^k$ lie in a certain subspace
$C_c^\flat(F_p) \subset C_c(F_p)$ that we now define.
Write $S = \{ \fp_1, \dotsc, \fp_r\}$, and for each $\fp \in S$ define
$C_c^\flat(F_\fp)$ to be the subspace of $C_c(F_\fp)$ consisting of those functions that are constant
in a neighborhood of $0$.  Define
\[ C_c^\flat(F_p) := \bigotimes_{\fp \in S} C_c^\flat(F_\fp) \otimes \bigotimes_{\fp \mid p, \ \fp \not\in S} C(\cO_\fp^\ast) \subset C_c(F_p). \]
In other words, $C_c^\flat(F_p)$ consists of functions on $F_p$ that can be written as finite $\C_p$-linear combinations of products $\prod_{\fp \mid p} f_\fp$, with $f_\fp \in C_c^\flat(F_\fp)$ for $\fp \in S$ and $f_\fp \in C(\cO_\fp^\ast)$ for $\fp \mid p$, $\fp \not\in S$. 
 Note that $ \mathbf{1}_\O \cdot (\log_p \N x)^k \in C_c^\flat(F_p)$, since
 for a tuple $x = (x_\fp) \in F_p$ with $x_\fp \in \cO_\fp$, $x_\fp \neq 0$,
we have 
\begin{equation} \label{e:lpsum}
\log_p \N x = \sum_{\fp} \ell_\fp(x), \text{ where } \ell_\fp(x) := \log_p \N_{F_\fp/\Q_p} x_\fp. 
\end{equation}

We will show that in fact
\begin{equation} \label{e:cyczero2}
 [\fL_k] = 0 \text{ in } H_{n-1}(U , C_c^\flat(F_p)) \text{ for } k < r,
 \end{equation}
 which of course implies (\ref{e:cyczero}).
 The proof of (\ref{e:cyczero2})
is broken into two steps. Let $I$ denote the augmentation ideal of the group ring $\C_p[T]$. 

\begin{theorem}[Spiess, \cite{sphmf}]  \label{t:sp1} The natural map $H_{n-1}(U, C_c^\flat(F_p)) \lra H_{n-1}(E, C_c^\flat(F_p)/I)$ is an isomorphism.
\end{theorem}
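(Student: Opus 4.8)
The plan is to exploit the product $U = E \times T$ through the Lyndon--Hochschild--Serre spectral sequence of the (split) extension $1 \to T \to U \to E \to 1$ acting on $M := C_c^\flat(F_p)$:
\[ E^2_{p,q} = H_p\bigl(E,\, H_q(T, M)\bigr) \;\Longrightarrow\; H_{p+q}(U, M). \]
The whole statement — indeed the analogous isomorphism in every degree, not just $n-1$ — follows as soon as one knows that $M$ is $T$-acyclic in positive degrees, i.e.\ $H_q(T, C_c^\flat(F_p)) = 0$ for all $q > 0$. Granting this, $E^2$ is concentrated in the bottom row $q=0$, all differentials vanish, and for each $p$ the edge map is an isomorphism $H_p(U, M) \xrightarrow{\ \sim\ } H_p\bigl(E, H_0(T, M)\bigr) = H_p(E, M/I\cdot M)$; this edge map is precisely the natural map of the statement (induced by $M \twoheadrightarrow M/I\cdot M$ together with $U \twoheadrightarrow E$), and at $p = n-1$ it gives the theorem. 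If $S = \emptyset$ then $T$ is trivial, $I = 0$, and the statement is vacuous, so assume $S \neq \emptyset$.

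To establish the acyclicity I would induct on $r = \#S$, peeling off the generators $\pi_1, \dots, \pi_r$ of $T$ one at a time and, at each stage, invoking the same degeneration to pass from $H_\ast(\langle\pi_i, \dots, \pi_r\rangle, -)$ to $H_\ast(\langle\pi_{i+1}, \dots, \pi_r\rangle, (-)_{\langle\pi_i\rangle})$. The key local input is that $C_c^\flat(F_p)$ is assembled from the factors $C_c^\flat(F_{\fp_j})$ for $\fp_j \in S$ and $C(\cO_\fp^\ast)$ for $\fp \notin S$, that each generator $\pi_i$ acts on $C_c^\flat(F_{\fp_i})$ by the dilation $(\pi_i f)(x) = f(x/\pi_i)$ — a contraction towards $0$, since $\ord_{\fp_i}(\pi_i) > 0$ — and by unit rotations on every other factor, and that there is a $\langle\pi_i\rangle$-equivariant short exact sequence
\[ 0 \longrightarrow C_c(F_{\fp_i}^\ast) \longrightarrow C_c^\flat(F_{\fp_i}) \xrightarrow{\ f \mapsto f(0)\ } \C_p \longrightarrow 0 \]
with $\C_p$ carrying the trivial $\langle\pi_i\rangle$-action. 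Since $\langle\pi_i\rangle$ acts freely on $F_{\fp_i}^\ast$ with $\cO_{\fp_i}\setminus\pi_i\cO_{\fp_i}$ as a fundamental domain, $C_c(F_{\fp_i}^\ast)$ — and, by the standard untwisting, its tensor product with any module on which $\pi_i$ acts by an automorphism — is a free $\C_p[\langle\pi_i\rangle]$-module, hence $\langle\pi_i\rangle$-acyclic in positive degrees. The long exact homology sequence then reduces positive-degree $\langle\pi_i\rangle$-homology of the relevant tensor product $C_c^\flat(F_{\fp_i}) \otimes N$ to the connecting map $\ker(\pi_i - 1 \mid N) = H_1(\langle\pi_i\rangle, N) \to H_0(\langle\pi_i\rangle, C_c(F_{\fp_i}^\ast)\otimes N)$, and a one-line computation shows it sends $m$ to the class of $(\pi_i - 1)(\mathbf 1_{\cO_{\fp_i}}\otimes m) = -\,\mathbf 1_{\cO_{\fp_i}\setminus\pi_i\cO_{\fp_i}}\otimes m$, i.e.\ to the constant section $-m$; because $\cO_{\fp_i}\setminus\pi_i\cO_{\fp_i}$ is a fundamental domain, this is injective. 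Hence $H_q(\langle\pi_i\rangle, C_c^\flat(F_{\fp_i})\otimes N) = 0$ for $q > 0$, one may pass to the $\langle\pi_i\rangle$-coinvariants and continue, and after $r$ steps only $M_T$ in degree $0$ remains.

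I expect the real obstacle to be the bookkeeping in this induction: each $\pi_i$ contracts only at its own place $\fp_i$ while acting by a unit rotation at all the other places of $S$ and on every $\cO_\fp^\ast$-factor, so one must check carefully that after taking coinvariants for $\pi_1, \dots, \pi_{i-1}$ the resulting module still splits off, compatibly with the $\pi_i$-action, a factor on which $\pi_i$ acts by a contraction of the above type — that is, that "being a contracting module at one place" is stable under tensoring with rotation modules and under passage to coinvariants for the other generators, and that the $\C_p[\langle\pi_i\rangle]$-freeness input survives these operations. This is exactly the technical content of Spiess's argument, and I would follow \cite{sphmf} for the details rather than redo them here.
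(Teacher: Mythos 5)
Your reduction is the same as the paper's: Theorem~\ref{t:sp1} is deduced from the degeneration of the Hochschild--Serre spectral sequence $E^2_{p,q}=H_p(E,H_q(T,M))$ for $M=C_c^\flat(F_p)$, which requires $H_q(T,M)=0$ for $q>0$, and the edge map is the natural map of the statement. Where you diverge is in how that vanishing is obtained. The paper proves the stronger statement (Lemma~\ref{l:free}) that $M$ is a \emph{free} $\C_p[T]$-module, handling all of $T$ at once rather than one $\pi_i$ at a time. The filtration used at the place $\fp=\fp_r$ is not your evaluation-at-$0$ sequence but
\[ 0 \longrightarrow C_c^\flat(F_{S'}) \otimes C_c^0(F_\fp) \longrightarrow C_c^\flat(F_S) \longrightarrow C_c^\flat(F_{S'}) \otimes C_c(F_\fp^\ast)/C_c^0(F_\fp^\ast) \longrightarrow 0, \]
where $C_c^0$ denotes locally constant compactly supported functions. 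The point of this choice is that \emph{both} the sub and the quotient are induced $\C_p[\langle\pi\rangle]$-modules: the troublesome one-dimensional piece $\C_p 1_{\cO_\fp}$ (your trivial quotient $\C_p$) is absorbed into a free module via the identity $1_{\cF_\fp}=(1-\pi)1_{\cO_\fp}$, giving $C_c^0(F_\fp)=\Ind^{\langle\pi\rangle}(V\oplus \C_p 1_{\cO_\fp})$. Then Lemma~\ref{l:g1g2} (a module induced over $G_1$ tensored with one induced over $G_2$ is free over $G_1\times G_2$) yields freeness of each graded piece over all of $\C_p[T]$ in one stroke; an extension of free modules is free, and Shapiro's lemma kills the higher $T$-homology.

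This is exactly what closes the gap you flag at the end. Your single-place computation is correct: the connecting map sends $m\in N^{\pi_i}$ to the class of $-1_{\cF_{\fp_i}}\otimes m$, which is nonzero in the coinvariants of the induced module, so $H_q(\langle\pi_i\rangle, C_c^\flat(F_{\fp_i})\otimes N)=0$ for $q>0$. But your plan to iterate this requires identifying the $\langle\pi_1,\dots,\pi_{i-1}\rangle$-coinvariants of a diagonal action on a tensor product, and $(A\otimes B)_{\pi}$ is not the tensor product of coinvariants; as written the induction does not go through, and you defer precisely this step to the reference, which you cannot do here since the paper reproduces Spiess's argument in full. The fix is to replace the iterated-coinvariants induction by the freeness statement above (equivalently: freeness shows $(\pi_1-1,\dots,\pi_r-1)$ is a regular sequence on $M$, so the Koszul complex computing $H_*(T,M)$ is exact in positive degrees), after which your first paragraph finishes the proof unchanged.
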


\begin{theorem}[Spiess, \cite{sphmf}] \label{t:sp2} For $k < r$, we have \[ \mathbf{1}_\O \cdot (\log_p \N x)^k \in I \cdot C_c^\flat(F_p),\] and in particular
the image of $[\fL_k]$ in $H_{n-1}(E, C_c^\flat(F_p)/I)$ vanishes.
\end{theorem}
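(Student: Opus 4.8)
The plan is to prove the two assertions in turn; the second is immediate from the first. Indeed, if $\mathbf{1}_\O\cdot(\log_p\N x)^k\in I\cdot C_c^\flat(F_p)$, then since $\fL_k=\cA(\epsilon_1,\dots,\epsilon_{n-1})\otimes\big(\mathbf{1}_\O\cdot(\log_p\N x)^k\big)$ and the natural map $C_{n-1}(U,C_c^\flat(F_p))\to C_{n-1}(E,C_c^\flat(F_p)/I)$ (restriction of the group elements to $E$ together with reduction of coefficients modulo $I\cdot C_c^\flat(F_p)$) carries any chain $\cA(\epsilon_1,\dots,\epsilon_{n-1})\otimes m$ with $m\in I\cdot C_c^\flat(F_p)$ to the zero chain, the image of $[\fL_k]$ in $H_{n-1}(E,C_c^\flat(F_p)/I)$ vanishes. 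So the task reduces to showing that the image of $\mathbf{1}_\O\cdot(\log_p\N x)^k$ in the $T$-coinvariants $C_c^\flat(F_p)_T:=C_c^\flat(F_p)/I\cdot C_c^\flat(F_p)$ is zero whenever $k<r$.

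First I would expand, using $\log_p\N x=\sum_{\fp\mid p}\ell_\fp(x)$ from (\ref{e:lpsum}),
\[
\mathbf{1}_\O\cdot(\log_p\N x)^k=\sum_{|\beta|=k}\binom{k}{\beta}\;\mathbf{1}_\O\cdot\prod_{\fp\mid p}\ell_\fp^{\beta_\fp},
\]
the sum running over multi-indices $\beta$ on the primes of $F$ above $p$. Because $k<r=\#S$, for every such $\beta$ there is an index $\fp_i\in S$ with $\beta_{\fp_i}=0$: the $\fp_i$-slot of the corresponding summand carries only the indicator $\mathbf{1}_{\cO_{\fp_i}-\pi_i\cO_{\fp_i}}$, with no factor $\ell_{\fp_i}$. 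This ``missing prime'' is the only point at which the hypothesis $k<r$ enters.

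The structural input, at each $\fp_i\in S$, is the identity
\[
\mathbf{1}_{\cO_{\fp_i}-\pi_i\cO_{\fp_i}}=(1-\pi_i)\cdot\mathbf{1}_{\cO_{\fp_i}}\qquad\text{in }C_c^\flat(F_{\fp_i}),
\]
reflecting that the submodule $C_c(F_{\fp_i}^\times)\subset C_c^\flat(F_{\fp_i})$ of functions vanishing near $0$ is cohomologically trivial as a $\C_p[\pi_i^{\Z}]$-module ($\cO_{\fp_i}-\pi_i\cO_{\fp_i}$ being a fundamental domain for $\pi_i^{\Z}$ on $F_{\fp_i}^\times$), with trivial quotient $\C_p$ in which $\mathbf{1}_{\cO_{\fp_i}}$ represents $1$. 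The mechanism is to use this identity to ``enlarge'' the $\fp_i$-slot of a summand from $\cO_{\fp_i}-\pi_i\cO_{\fp_i}$ to $\cO_{\fp_i}$ modulo $I\cdot C_c^\flat(F_p)$. The essential role of the missing prime is that the resulting potential function $\mathbf{1}_{\cO_{\fp_i}}\otimes(\text{the remaining factors})$ still lies in $C_c^\flat(F_p)$: it develops no singularity along the hyperplane $x_{\fp_i}=0$ precisely because no $\ell_{\fp_i}$ is present; this is exactly the obstruction that defeats the naive argument for general $k$. Carrying out the computation, and using that each $\pi_i$ acts at every place $\fp\neq\fp_i$ by multiplication by a unit lying in $1+\ff\cO_\fp$ (as $\pi_i\equiv 1\bmod\ff$), so that the compact sets $\cO_{\fp_j}-\pi_j\cO_{\fp_j}$ and $\cO^\ast_{\fp,\ff}$ are preserved, one finds that in $C_c^\flat(F_p)_T$ the class of the summand equals that of a term of the same shape, now with an inert factor $\mathbf{1}_{\cO_{\fp_i}}$ in the $\fp_i$-slot and with the polynomial in the remaining $\ell_\fp$ of degree $\le k-1$ — the drop coming from cancellation of top-degree terms when each $\ell_\fp$ is replaced by $\ell_\fp-\log_p\N_{F_\fp/\Q_p}((\pi_i)_\fp)$.

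This sets up an induction on the polynomial degree: any function $\mathbf{1}_{(\text{inert slots})}\otimes\mathbf{1}_{\Omega'}\cdot\prod_{\fp}\ell_\fp^{\beta_\fp}$, with $\Omega'$ of the same shape as $\O$ at the not-yet-inert primes of $S$ and $|\beta|$ strictly smaller than the number of those primes, has zero image in $C_c^\flat(F_p)_T$; the inductive step is the enlargement above at a missing prime of $S$ (available by the degree inequality), and the base case of degree $0$ is the same enlargement with no $\ell$ present, namely $\mathbf{1}_{(\text{inert})}\otimes\mathbf{1}_{\Omega'}=-(\pi_i-1)\cdot\big(\mathbf{1}_{\cO_{\fp_i}}\otimes(\text{remaining indicators})\big)$. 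Applying this to each summand of the multinomial expansion proves the theorem. The main obstacle I anticipate is the bookkeeping in this induction: one must check throughout that every potential function stays inside $C_c^\flat(F_p)$ (no $\ell_\fp$ ever appears at a prime whose slot is being enlarged), that the sets $\Omega'$ are preserved by the unit-scaling part of the action, and that the inequality ``(polynomial degree) $<$ (number of available primes in $S$)'' persists so that a missing prime is always at hand. All of this is due to Spiess.
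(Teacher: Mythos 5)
Your proposal is correct and follows essentially the same route as the paper: the multinomial expansion of $(\log_p \N x)^k$, the pigeonhole observation that $k<r$ forces a prime $\fp_i\in S$ with no $\ell_{\fp_i}$ factor, the identity $\mathbf{1}_{\cO_{\fp_i}-\pi_i\cO_{\fp_i}}=(1-\pi_i)\cdot\mathbf{1}_{\cO_{\fp_i}}$, the degree drop from the additivity of $(1-\pi_i)$ on $\ell_\fp$, and an induction governed by the invariant (degree) $<$ (number of non-inert primes of $S$) — exactly the paper's induction on $|n|$ with $|R|>|n|$. The only detail you elide is that the Leibniz-type identity $((1-\pi)f)\cdot g=(1-\pi)(f\cdot g)-f\cdot((1-\pi)g)+((1-\pi)f)\cdot((1-\pi)g)$ produces \emph{two} lower-degree terms (one with the enlarged slot and one without), but both are covered by your stated inductive hypothesis.
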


Theorems~\ref{t:sp1} and \ref{t:sp2} combine to yield (\ref{e:cyczero2}), which in turn combined with (\ref{e:intpair}) and (\ref{e:oint}) yields the proof of Theorem~\ref{t:oov}. 
For completeness, we recall Spiess's proofs of these results.

\begin{lemma}  \label{l:free}
The space $C_c^\flat(F_p)$ is a free $\C_p[T]$-module.
\end{lemma}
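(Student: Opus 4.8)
The plan is to prove, by induction on $|S|$, the stronger statement that $C_c^\flat(F_p)$ is \emph{geometrically free} over $\C_p[T]$: that is, there is a $\C_p$-subspace $M_0$ with $C_c^\flat(F_p) = \bigoplus_{t \in T} t \cdot M_0$. This strengthening is what one needs, because geometric freeness is preserved by the two operations involved, namely peeling off one of the primes $\fp_i$ and tensoring over $\C_p$ with an auxiliary module carrying a commuting $T$-action. Write $C_c^\flat(F_p) = C_c^\flat(F_{\fp_r}) \otimes_{\C_p} N$ with $N = \bigotimes_{\fp \in S \smallsetminus \{\fp_r\}} C_c^\flat(F_\fp) \otimes \bigotimes_{\fp \mid p,\ \fp \notin S} C(\cO_\fp^\ast)$, and $T = \langle \pi_r \rangle \times T'$ with $T' = \langle \pi_1, \dots, \pi_{r-1}\rangle$. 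On $C_c^\flat(F_{\fp_r})$ the generator $\pi_r$ acts by multiplication by an element of valuation $e_r > 0$ at $\fp_r$, whereas every other $\pi_i$ ($i<r$), and every factor $C(\cO_\fp^\ast)$, is acted on only ``by rotation'' (multiplication by a $\fp$-adic unit).

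The key local input is that $C_c^\flat(F_\fp)$ is geometrically free over $\C_p[\langle\pi\rangle]$ whenever $v_\fp(\pi) = e > 0$. One first checks that the translates $\pi^n \cdot \mathbf 1_{\cO_\fp}$, $n \in \Z$, are $\C_p$-linearly independent, so that $F_0 := \C_p[\langle\pi\rangle]\cdot\mathbf 1_{\cO_\fp}$ is free of rank one (geometric slice $\C_p\mathbf 1_{\cO_\fp}$); that $C_c^\flat(F_\fp) = F_0 + C_c(F_\fp^\times)$ with $F_0 \cap C_c(F_\fp^\times) = (\pi-1)F_0$; that $C_c(F_\fp^\times)$ is geometrically free over $\C_p[\langle\pi\rangle]$ with slice $C(D)$ for $D$ a compact-open fundamental domain of $\pi^\Z$ acting on $F_\fp^\times$; and that $(\pi-1)F_0 = \C_p[\langle\pi\rangle]\cdot\mathbf 1_D$ is a \emph{direct summand} of $C_c(F_\fp^\times)$, because $\C_p\mathbf 1_D$ is a direct summand of the slice $C(D)$. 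For the equivariance needed below, take the complement $R_0 := \ker\big(\textstyle\int_D(\,\cdot\,)\,d\mu\big)$, where $\mu$ is the natural (local-unit-invariant) measure on $D$. Amalgamating, $C_c^\flat(F_\fp) \cong F_0 \oplus \big(\C_p[\langle\pi\rangle]\otimes R_0\big)$ is geometrically free with slice $M_0 := \C_p\mathbf 1_{\cO_\fp} \oplus R_0$; and since rotations fix $\mathbf 1_{\cO_\fp}$ and preserve $\mu$ (hence stabilize $R_0$) while $\pi_r$ is central, the decomposition $C_c^\flat(F_{\fp_r}) = \bigoplus_n \pi_r^n M_0$ is stable under $T'$, i.e.\ $C_c^\flat(F_{\fp_r}) \cong M_0 \otimes_{\C_p} \C_p[\langle\pi_r\rangle]$ as $\C_p[T]$-modules, with $T'$ acting on $M_0$ and $\pi_r$ by translation.

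The inductive step now follows from two elementary facts: (a) if $A$ is geometrically free over $\C_p[G]$ and $B$ is any $\C_p[G]$-module, then $A \otimes_{\C_p} B$ is geometrically free over $\C_p[G]$ (untwist via $V \otimes_{\C_p} \C_p[G]_{\mathrm{reg}} \cong V^{\mathrm{triv}} \otimes_{\C_p} \C_p[G]_{\mathrm{reg}}$, $v \otimes g \mapsto g^{-1}v \otimes g$); and (b) if $P$ is geometrically free over $\C_p[T']$ then $P \otimes_{\C_p}\C_p[\langle\pi_r\rangle]$, with $T'$ on $P$ and $\pi_r$ by translation, is geometrically free over $\C_p[T] = \C_p[T']\otimes\C_p[\langle\pi_r\rangle]$. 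By induction $N$ is geometrically $\C_p[T']$-free; by (a) over $G = T'$ so is $M_0 \otimes_{\C_p} N$; by (a) over $G = \langle\pi_r\rangle$, the identification $C_c^\flat(F_p) \cong \big(M_0 \otimes\C_p[\langle\pi_r\rangle]\big)\otimes_{\C_p} N \cong (M_0\otimes_{\C_p}N)\otimes_{\C_p}\C_p[\langle\pi_r\rangle]$ has $\pi_r$ acting only by translation on the last factor while $T'$ acts diagonally on $M_0\otimes_{\C_p}N$; and (b) then yields geometric $\C_p[T]$-freeness. The base case $S = \varnothing$ (where $C_c^\flat(F_p) = \bigotimes_{\fp\mid p} C(\cO_\fp^\ast)$ and $T$ is trivial) is vacuous.

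I expect the main obstacle to be the local statement that $C_c^\flat(F_\fp)$ is even free over $\C_p[\langle\pi\rangle]$: the natural exact sequence $0 \to C_c(F_\fp^\times) \to C_c^\flat(F_\fp) \to \C_p \to 0$ does not split $\langle\pi\rangle$-equivariantly (there is no $\pi$-invariant function with $f(0) = 1$), so one is forced to produce freeness by exhibiting $(\pi-1)F_0$ as a direct summand of $C_c(F_\fp^\times)$ and, moreover, to carry out this splitting compatibly with the ambient rotation actions — which is exactly why the invariant measure $\mu$, rather than an arbitrary linear complement, enters.
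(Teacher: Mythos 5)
Your global strategy --- induct on $|S|$, peel off one prime $\fp_r$, untwist diagonal tensor-product actions via $g\otimes b\mapsto g\otimes g^{-1}b$, and exhibit $C_c^\flat(F_{\fp_r})$ as induced from $\langle\pi_r\rangle$ with a slice stable under the rotation action of $T'$ --- is essentially the strategy of the paper (your facts (a) and (b) are its Lemma on $G_1\times G_2$, and its equivariant complement is likewise cut out by the Haar integral). The gap is in the local step. You set $R_0=\ker\bigl(\int_D(\cdot)\,d\mu\bigr)\subset C(D)$, but the $p$-adic Haar measure on $D\subset\cO_\fp$ is unbounded: a ball of radius $q^{-n}$ has measure $q^{-n}$, whose $p$-adic absolute value tends to infinity, so the Riemann sums of a general continuous $\C_p$-valued function against $\mu$ do not converge. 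The functional $\int_D(\cdot)\,d\mu$ is therefore defined only on the locally constant functions $C^0(D)$ (where it is a finite sum), not on all of $C(D)$. Consequently $R_0$ is not a well-defined hyperplane complement of $\C_p\mathbf{1}_D$ in $C(D)$, and the decomposition $C_c^\flat(F_\fp)\cong F_0\oplus(\C_p[\langle\pi\rangle]\otimes R_0)$ --- which you correctly identify as the crux --- is not established. No bounded $T'$-invariant measure on $D$ exists that could replace $\mu$ here.

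The paper is structured precisely to avoid choosing a complement inside the full space of continuous functions. It filters by local constancy, using the exact sequence
$0\to C_c^\flat(F_{S'})\otimes C_c^0(F_\fp)\to C_c^\flat(F_S)\to C_c^\flat(F_{S'})\otimes C_c(F_\fp^\ast)/C_c^0(F_\fp^\ast)\to 0$.
On the subobject all functions are locally constant, so the Haar-integral complement $V=\ker(\int)\subset C^0(\cF_\fp)$ is legitimate and $T'$-stable; the quotient is literally $\Ind^{\langle\pi\rangle}\bigl(C(\cF_\fp)/C^0(\cF_\fp)\bigr)$, hence free with no equivariant splitting to be chosen; and the middle term is then free because the free quotient is projective, so the sequence splits automatically. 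To repair your argument you would need either to restructure it along these lines, or to prove independently that $\C_p\mathbf{1}_D$ admits some $T'$-stable complement in all of $C(D)$, which your invariant-measure construction does not supply.
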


\begin{proof}  
We show by induction on $r$ that 
\[ C_c^\flat(F_S) :=  \bigotimes_{\fp \in S} C_c^\flat(F_\fp) \]
is a free $T$-module.  To this end, fix $\fp = \fp_r \in S$ and write $\pi = \pi_r$, $S' = \{\fp_1, \dotsc, \fp_{r-1}\}$, and 
$T' = \langle \pi_i \rangle_{i=1}^{r-1}$, so $T = T' \times \langle \pi \rangle$.
Our inductive hypothesis is that $C_c^\flat(F_{S'})$ is free as a $\C_p[T']$-module.

The space $C_c(F_\fp^\ast)$ of compactly supported continuous functions on $F_\fp^\ast$ can
be identified with the subspace of $C_c^\flat(F_\fp)$ consisting of those functions that vanish on a neighborhood of 0. 
 Write $C_c^0(F_\fp)$ for the space of compactly supported locally constant functions on $F_\fp$, so in particular 
$C_c^\flat(F_\fp)$ is generated by its subspaces $C_c(F_\fp^\ast)$ and $C_c^0(F_\fp)$, and we have
\[ C_c^\flat(F_\fp) / C_c^0(F_\fp) \cong C_c(F_\fp^\ast) / C_c^0(F_\fp^\ast). \]
We therefore obtain an exact sequence
\begin{equation} \label{e:cseq}
0 \longrightarrow C_c^\flat(F_{S'}) \otimes C_c^0(F_\fp) \longrightarrow C_c^\flat(F_S)
\longrightarrow C_c^\flat(F_{S'}) \otimes C_c(F_\fp^\ast) / C_c^0(F_\fp^\ast) \longrightarrow 0.
\end{equation}
It suffices to prove that the first and third terms of the sequence are free $\C_p[T]$-modules.

Since $\cF_\fp := \cO_\fp - \pi \cO_\fp$ is a fundamental domain for the action of $\pi$ on $F_\fp^\ast$, we have
\begin{equation}
 C_c^0(F_\fp) =  C_c^0(F_\fp^\ast) \oplus \C_p 1_{\cO_\fp} =  (\Ind^{\langle \pi \rangle} C^0(\cF_\fp)) \oplus \C_p 1_{\cO_\fp} 
 \label{e:cdecomp}
 \end{equation}
  as $\C_p[T']$-modules.
  Choose a $T'$-stable decomposition
$ C^0(\cF_\fp) = V \oplus \C_p 1_{\cF_\fp}$ (for instance, we may take $V \subset C^0(\cF_\fp)$ to be the subspace of functions that have integral against Haar measure on $\cO_\fp$ equal to 0).
Using $1_{\cF_\fp} = (1 - \pi)1_{\cO_\fp}$, one sees from (\ref{e:cdecomp}) that 
\[ C_c^0(F_\fp) = \Ind^{\langle \pi \rangle} (V \oplus 1_{\cO_\fp}) \]
as $\C_p[T]$-modules.  Lemma~\ref{l:g1g2} below then implies that 
$ C_c^\flat(F_{S'}) \otimes C_c^0(F_\fp)$ is a free $\C_p[T]$-module.

Similarly,
\[ C_c(F_\fp^\ast) / C_c^0(F_\fp^\ast) \cong \Ind^{\langle \pi \rangle}(C(\cF_\fp)/C^0(\cF_\fp)) \]
as a $\C_p[T]$-module, and hence the third term of (\ref{e:cseq}) is a free $\C_p[T]$-module
by  Lemma~\ref{l:g1g2}.  We conclude that $C_c^\flat(F_S)$ is a free $\C_p[T]$-module,
and hence $C_c^\flat(F_p)$ is as well.
% This is wrong!!!
% There is an isomorphism of $F_\fp^\ast$-modules
% \[ \phi: C_c^\flat(F_\fp) \longrightarrow  C_c(F_\fp^\ast)
% \]
% given by $\phi(f)(x) = f(x) - f(x/\pi)$, where $\pi$ is a uniformizer for $F_\fp$.
% The inverse of $\phi$ is given by \[ \phi(g)(x) = \sum_{i=0}^{\infty} g(x / \pi^i) \text{ for } x \neq 0, \text{ (finite sum)}
% \] and extended to $x=0$ by continuity.
 % If we write $C_c^0(F_\fp)$ for the space of compactly supported locally constant functions on $F_\fp$, then
% $\phi$ restricts to an isomorphism of $\cO_\fp^\ast$-invariant functions:
% \[ \phi: C_c^0(F_\fp)^{\cO_\fp^\ast} \longrightarrow  C_c(F_\fp^\ast)^{\cO_\fp^\ast}.
% \]
%Now define a two step filtration $\sF^\bullet_\fp$ of $F_\fp^*$-modules on $C_c^\flat(F_\fp)$ as follows:
% \[ \sF^0_\fp = C_c^\flat(F_\fp), \quad \sF^1_\fp = C_c^0(F_\fp)^{\cO_\fp^\ast}, \quad
% \sF^2_\fp = 0. \]
% The graded $F_\fp^*$-modules associated to this filtration are
% \begin{align}
% \gr_{\sF_\fp}^0 =& \ C_c^\flat(F_\fp) / C_c^0(F_\fp)^{\cO_\fp^\ast} \nonumber \\
% \cong&  \ C_c(F_\fp^\ast) / C_c(F_\fp^\ast)^{\cO_\fp^{\ast}}  \nonumber \\
% \cong& \ \Ind_{\cO_\fp^\ast}^{F_\fp^\ast} (C_c(\cO_\fp^\ast)/\C_p), \\  \nonumber \\
% \gr_{\sF_\fp}^1 =& \ C_c^0(F_\fp)^{\cO_\fp^\ast} \nonumber \\ 
%  \cong& \  C_c(F_\fp^\ast)^{\cO_\fp^{\ast}}  \nonumber \\
% \cong& \ \Ind_{\cO_\fp^\ast}^{F_\fp^\ast} \C_p,
% \end{align}
%and $ \gr_{\sF_\fp}^n = 0$ for $n \neq 0, 1$.
% Associated to these filtrations on the individual $C_c^\flat(F_\fp^\ast)$, we obtain a filtration
%on $C_c^\flat(F_p^\ast)$ as follows.
\end{proof}

\begin{lemma} \label{l:g1g2}
Let $G_1$ and $G_2$ be groups, $G = G_1 \times G_2$, and let $K$ be a field.
Let $M_1, M_2$ two $K[G]$-modules such that $M_2 \cong \Ind^{G_2} N_2$ as a $K[G]$-module for some $G_1$-module $N_2$, and $M_1 \cong \Ind^{G_1} N_1$ as $K[G_1]$-modules for some $K$-vector space $N_1$.  
Then $M_1 \otimes_K M_2$ is a free $K[G]$-module.
\end{lemma}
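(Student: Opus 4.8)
The plan is to unwind both induced-module hypotheses into explicit tensor products over group rings and then rearrange, using the \emph{tensor identity} (projection formula) for induction together with transitivity of induction. Throughout, identify $G_1$ and $G_2$ with the commuting subgroups $G_1\times\{e\}$ and $\{e\}\times G_2$ of $G$. With this convention the hypothesis on $M_2$ reads $M_2\cong K[G]\otimes_{K[G_1]}N_2$ as $K[G]$-modules, where $N_2$ is regarded as a $K[G_1]$-module (equivalently $M_2\cong K[G_2]\otimes_K N_2$ with $G_2$ acting by left translation on $K[G_2]$ and $G_1$ acting on $N_2$); and the hypothesis on $M_1$ says exactly that $M_1\cong K[G_1]\otimes_K N_1$ as $K[G_1]$-modules, i.e.\ that $M_1$ is a free $K[G_1]$-module. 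I would recall two standard facts. First, the tensor identity: for a subgroup $H\le G$, a $K[G]$-module $M$ and a $K[H]$-module $N$, the map $m\otimes(g\otimes n)\mapsto g\otimes(g^{-1}m\otimes n)$ defines an isomorphism of $K[G]$-modules
\[ M\otimes_K\bigl(K[G]\otimes_{K[H]}N\bigr)\;\cong\;K[G]\otimes_{K[H]}\bigl(\mathrm{Res}^G_H M\otimes_K N\bigr). \]
Second, its special case $H=\{e\}$, the \emph{untwisting} isomorphism: for any $K[H]$-module $V$, the map $v\otimes g\mapsto g\otimes g^{-1}v$ gives $V\otimes_K K[H]\cong K[H]\otimes_K V_0$, where $V_0$ is the underlying vector space of $V$ with trivial $H$-action; in particular $V\otimes_K K[H]$ is a free $K[H]$-module of rank $\dim_K V$. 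Both maps are well defined because an element of an induced module is a \emph{finite} sum indexed by cosets, so no finiteness hypothesis on the modules is needed, which matters here since the modules arising in the application are infinite-dimensional over $\C_p$.

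Then I would chain together the following isomorphisms of $K[G]$-modules. Applying the tensor identity with $H=G_1$, $M=M_1$, $N=N_2$ gives
\[ M_1\otimes_K M_2\;\cong\;M_1\otimes_K\bigl(K[G]\otimes_{K[G_1]}N_2\bigr)\;\cong\;K[G]\otimes_{K[G_1]}\bigl(\mathrm{Res}^G_{G_1}M_1\otimes_K N_2\bigr). \]
Using the hypothesis on $M_1$ and then the untwisting isomorphism inside $G_1$ (untwisting $N_2$ against the free factor $K[G_1]$) yields an isomorphism of $K[G_1]$-modules
\[ \mathrm{Res}^G_{G_1}M_1\otimes_K N_2\;\cong\;\bigl(K[G_1]\otimes_K N_1\bigr)\otimes_K N_2\;\cong\;K[G_1]\otimes_K\bigl(N_1\otimes_K N_2\bigr)_0, \]
where the subscript $0$ denotes trivial $G_1$-action. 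Substituting this into the previous line and using transitivity of induction in the form $K[G]\otimes_{K[G_1]}\bigl(K[G_1]\otimes_K W\bigr)\cong K[G]\otimes_K W$ for a vector space $W$, I obtain
\[ M_1\otimes_K M_2\;\cong\;K[G]\otimes_K\bigl(N_1\otimes_K N_2\bigr), \]
which is a free $K[G]$-module, as desired.

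I do not expect a genuine obstacle here; the argument is entirely formal. The only points that require some care are (a) fixing conventions so that the paper's notation $\Ind^{G_2}$ (resp.\ $\Ind^{G_1}$) is correctly read as induction from $G_1$ to $G$ (resp.\ from the trivial subgroup to $G_1$), and keeping careful track of the two commuting actions through the tensor rearrangements; and (b) noting that every isomorphism invoked is valid for arbitrary modules, so that the infinite-dimensionality of the function spaces $C_c^\flat$ in the application is harmless.
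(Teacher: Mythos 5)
Your proof is correct and is essentially the paper's argument: the paper's one-line proof simply asserts the isomorphism $M_1 \otimes_K M_2 \cong \Ind^G(N_1 \otimes_K \Res^{G_1}_{1} N_2)$, which is exactly the conclusion you reach via the tensor identity, untwisting, and transitivity of induction. You have merely supplied the verification that the paper leaves to the reader.
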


\begin{proof}
One verifies that $M_1 \otimes_K M_2 \cong \Ind^G (N_1 \otimes_K \Res^{G_1}_{1} N_2).$
\end{proof}

Lemma~\ref{l:free} immediately yields:

\begin{proof}[Proof of Theorem~\ref{t:sp1}]
Since $C_c^\flat(F_p)$ is a free $\C_p[T]$-module, it has homological dimension 1, and
the Hochschild--Serre spectral sequence
\[ E^2_{p,q} = H_p(E, H_q(T, C_c^\flat(F_p))) \Longrightarrow H_{p+q}(U, C_c^\flat(F_p))
\]
degenerates at $E^2$.  Therefore the maps $E_n \longrightarrow E^2_{n,0}$ are isomorphisms as desired.
\end{proof}

We now move on to:

\begin{proof}[Proof of Theorem~\ref{t:sp2}]  As above we write $S = \{ \fp_1, \dotsc, \fp_r \}$. For any subset $R \subset S$, let
\[ \O_R :=  \prod_{\fp_i \in R} (\cO_{\fp_i} - \pi_i \cO_{\fp_i}) \times
\prod_{\fp_i \in S- R} \cO_{\fp_i} \times 
 \prod_{\fp \mid p, \ \fp \not\in S } \cO_{\fp, \ff}^*,
\]
so in particular $\O_S = \O$.  

From (\ref{e:lpsum}) it follows that $(\log_p \N x)^k$ can be expanded as a sum of monomials of the form 
\[ \ell(x)^{n} := \prod_{\fp | p} \ell_\fp(x)^{n_\fp},  \]
 where $n = (n_\fp)_{\fp \mid p}$ is a tuple of nonnegative integers such that $|n| := \sum_{\fp} {n_\fp} = k$.
We will prove by induction on $|n|$ that if $R \subset S$ is a subset such that $n_\fp = 0$ for all $\fp \in S - R$ and
$|R| > |n|$, then
\[ 1_{\O_R} \cdot \ell(x)^n \in I \cdot C_c^\flat(F_p).
\]
The desired result will then follow from the case $R =S$, $|n| = k$.

For the base case, $|n| = 0$, we choose $\fp_i \in R$ (which is possible since $|R| > 0$), 
let $R' = R - \{ \fp_i\}$, and note
\begin{equation} \label{e:charlin}
1_{\O_R} = (1 - \pi_i) \cdot 1_{\O_{R'}}. 
\end{equation}

The inductive step is similar, using the linearity of the functions $\ell_\fp$.
Given $R \subset S$ such that $n_\fp = 0$ for all $\fp \in S - R$ and
$|R| > |n|$, we may choose a $\fp_i \in R$ such that $n_{\fp_i} = 0$.  
 Let $R' = R - \{\fp_i \}$.  Then:
\begin{align}
(1 - \pi_i) \ell(x)^n =&\  \prod_{\fp \mid p} \ell_\fp(x)^{n_\fp} - \prod_{\fp \mid p} (\ell_\fp(x) - \ell_\fp(\pi_i))^{n_\fp} \nonumber \\
= & \sum_{n', |n'| < |n|} a_{n'} \ell(x)^{n'} \label{e:loglin}
\end{align}
for some coefficients $a_{n'}$ (where $n_\fp' = 0$ for all $\fp \in S - R'$ if $a_{n'} \neq 0$).
Using the formula
\[ ((1 - \pi)f) \cdot g = (1-\pi)(f \cdot g) - f \cdot ((1 - \pi)g) + ((1 - \pi)f)\cdot((1-\pi)g) \]
applied with $f = 1_{\O_{R'}}$, $g = \ell(x)^n$, and $\pi = \pi_i$, equations (\ref{e:charlin}) and 
(\ref{e:loglin})
combine to give
\begin{align*}
1_{\O_R} \cdot \ell(x)^n =& ((1 - \pi_i) 1_{\O_{R'}}) \cdot \ell(x)^n \\
\equiv & - 1_{\O_{R'}} \cdot \sum_{n'} a_{n'} \ell(x)^{n'} + 1_{\O_R} \cdot \sum_{n'} a_{n'} \ell(x)^{n'} \pmod{I \cdot C_c^\flat(F_p)}.
\end{align*}
Each of the terms $1_{\O_{R'}} \cdot \ell(x)^{n'}$ and $1_{\O_R} \cdot \ell(x)^{n'}$ lies in $I \cdot C_c^\flat(F_p)$ by the inductive hypothesis, yielding $1_{\O_R} \cdot \ell(x)^n \in I \cdot C_c^\flat(F_p)$ as desired.
\end{proof}

\bigskip
\bigskip

P.~C. :  \textsc{Institut de Math\'ematiques de Jussieu,
Universit\'e Paris 6,  France} 

\textit{E-mail} \textbf{pierre (dot) charollois (at) imj-prg  (dot) fr}

\medskip

S.~D. : \textsc{Dept. of Mathematics, University of California  Santa Cruz, USA} 

\textit{E-mail} \textbf{sdasgup2 (at) ucsc (dot) edu}

\end{document}